\documentclass[leqno]{amsart}
\usepackage{amsfonts,amssymb,amsmath,amsgen,amsthm, mathrsfs}
\usepackage{hyperref,color}
\usepackage{pdfsync}
\usepackage{bm}
\usepackage{bbm}

\theoremstyle{plain}
\newtheorem{theorem}{Theorem}[section]

\newtheorem{lemma}[theorem]{Lemma}

\newtheorem{proposition}[theorem]{Proposition}

\theoremstyle{remark}
\newtheorem{remark}[theorem]{Remark}

\def\C{{\mathbb C}}
\def\R{{\mathbb R}}
\def\N{{\mathbb N}}
\def\Z{{\mathbb Z}}

\def\G{\mathcal G}

\newcommand{\ii}{\mathrm{i}}
\newcommand{\ud}{\mathrm{d}}
\newcommand{\overbar}[1]{\mkern 1.5mu\overline{\mkern-1.5mu#1\mkern-1.5mu}\mkern 1.5mu}

\def\({\left(}
\def\){\right)}
\def\<{\left\langle}
\def\>{\right\rangle}

\def\1{{\mathbf 1}}

\def\eps{\varepsilon}

\def\om{\omega}

\def\D{{\mathfrak{D}}}
\def\deb{{\rightharpoonup}}

\numberwithin{equation}{section}

\date\today

\title[Point interactions and singular solutions]{Point interactions and singular solutions to semilinear elliptic equations}

\author{Filippo Boni}
\address{Politecnico di Torino, Dipartimento di Scienze Matematiche ``G.L.~Lagrange'', Corso Duca degli Abruzzi 24, 10129 Torino, Italy}
\email{filippo.boni@polito.it}

\author[Diego Noja]{Diego Noja}
\address{Dipartimento di Matematica e Applicazioni, Universit\`a di Milano Bicocca, Via R. Cozzi 55, 20126 Milano, Italy}
\email{diego.noja@unimib.it}

\author[Raffaele Scandone]{Raffaele Scandone}
\address{Universit\`a degli Studi di Napoli ``Federico II'', Dipartimento di Matematica e Applicazioni ``R.~Caccioppoli'', Complesso Monte S.~Angelo - Via Cintia, 80126 Napoli, Italy}
\email{raffaele.scandone@unina.it}

\begin{document}
	\begin{abstract} 
We investigate the connection between semilinear elliptic PDEs with isolated singularities and stationary nonlinear Schrödinger equations with point interactions. In dimensions $d=2,3$, we establish a rigorous correspondence between their solutions, revealing two regimes depending on whether a boundary condition at the singularity can be imposed. This connection enables us to exploit operator-theoretic and variational methods that have not previously been applied to the study of isolated singularities. In the source regime, we prove the existence of infinitely many radial singular solutions, by applying the symmetric mountain pass theorem of Ambrosetti and Rabinowitz to the action functional associated with the point interaction. When $d=2$, a suitable uniqueness result allows us to characterize singular ground states (positive solutions) as action minimizers and to prove the existence of infinitely many nodal singular solutions.
	\end{abstract}
	\date{\today}
	
	\subjclass[2020]{35A15, 35J61, 35Q40, 81Q10, 35B09}

	\keywords{Semilinear elliptic equations, singular solutions, point interactions, variational methods, nodal solutions}
	
\maketitle

\section{Introduction}
We consider the semilinear elliptic equation
\begin{equation}\label{eq:singular_elliptic}
(-\Delta+\lambda)u=\sigma|u|^{p-1}u,\quad x\in\R^d\setminus\{0\},
\end{equation}
in dimensions $d=2,3$, where $\sigma=\pm 1$, $\lambda>0$ and $p>1$. We emphasize that the equation is posed on the punctured domain
$\R^d\setminus\{0\}$ rather than on the whole space $\R^d$, so that solutions are allowed to exhibit a non-regular behavior at the origin. We focus on \emph{real-valued} solutions, even though most of our analysis can be readily extended to the complex-valued case, see Section \ref{re:complesso}.

Equation \eqref{eq:singular_elliptic}, as well as several of its variants (starting with the case $\lambda=0$) and generalizations, has been widely investigated; see, for instance, \cite{Lions80, Veron81, NS1_86, NS2_86, JPY94, Veron08, Naito-Sato, CirsteaDu07, GKS}, the monograph \cite{Veron-book}, and the references therein.  In this context, the positive and negative signs in front of the nonlinearity are commonly referred to as \emph{source term} and \emph{absorption term}, respectively. In the related setting of time-dependent nonlinear Schr\"odinger equations, the same distinction is typically expressed using the terminology \emph{focusing} and \emph{defocusing} nonlinearities. 

The above-cited works are mainly concerned with the classification of the possible singular behavior at the origin, in analogy with the well-known linear case $\sigma=0$. More precisely, under suitable assumptions, solutions to \eqref{eq:singular_elliptic} are known to be either regular, namely, they extend to $\mathcal{C}^2(\mathbb{R}^d)$ functions, or singular, namely, they belong to $\mathcal{C}^2(\mathbb{R}^d\setminus{0})$ and satisfy $|u(x)|\to\infty$ as $|x|\to0$. In the former case, the singularity is said to be \emph{removable}, whereas in the latter, which is our main focus, it is said to be \emph{isolated}.

The classical literature on the subject has largely focused on positive solutions with an isolated singularity at the origin and vanishing at infinity, the so-called singular ground states. These solutions can be classified according to the sign of $\sigma$, the exponent $p$ of the nonlinearity, and the space dimension $d$ (see the discussion in Section~\ref{isolatedsol}).

Our aim is to provide a novel \emph{operator-theoretic} and \emph{variational} characterization of equation~\eqref{eq:singular_elliptic}, that applies to both positive and sign-changing solutions, and to show that this framework leads to new insights and results concerning isolated singularities in nonlinear elliptic problems.

The key fact underlying our analysis is the connection between elliptic equations with isolated singularities and Schr\"odinger operators with a \emph{point interaction}. The latter can be seen as singular perturbations of the Laplacian, sometimes (informally and somewhat misleadingly) described as $\delta$-type potentials. More precisely, they are rigorously realized as the family of self-adjoint extensions of the Laplacian initially defined on $\mathcal{C}_0^\infty(\R^d\setminus\{0\})$, and are commonly denoted by $-\Delta_\alpha$, where $\alpha$ is the parameter labelling the specific extension (see Section \ref{subsec:Point} for a detailed description). We point out that in dimension $d>3$, the only possible extension is the free Laplacian,  which is the reason for the limitation on the dimension we imposed in \eqref{eq:singular_elliptic} (see Section \ref{re:dprime} for further comments). Schr\"odinger operators with point interactions are widely used in quantum mechanics, where they provide solvable effective models for the interaction of a non-relativistic particle with a very short-range potential (the so called ``zero range'' interactions). We refer to \cite{albeverio-solvable,alb-fig} and the references therein for a comprehensive overview of point interactions, covering both their rigorous mathematical treatment and their role in physics.

The strong connection between the structure of the domain of point-interaction operators and the local behavior of solutions to \eqref{eq:singular_elliptic} near isolated singularities was first emphasized in \cite{CFN-21}, in the context of well-posedness for a time-dependent Schrödinger equation. More specifically, functions belonging to the domain of $-\Delta_{\alpha}$ admit a decomposition into a regular part and a singular component, the latter being given by a multiple of the Green function of the Laplace operator. A similar structure is exhibited by solutions to \eqref{eq:singular_elliptic} in suitable regimes, where the singular behavior at the origin is described by the same Green function profile.

This analogy naturally suggests a correspondence between singular solutions to \eqref{eq:singular_elliptic} and solutions to the semilinear elliptic problem
\begin{equation}\label{eq:point_elliptic1}
	(-\Delta_{\alpha}+\lambda)u=\sigma|u|^{p-1}u,
\end{equation}
where the Laplacian is replaced by a point-interaction operator $-\Delta_{\alpha}$. 

Our first goal is to establish this correspondence in a rigorous and complete way, showing that the point-interaction formulation provides indeed an operator-theoretic realization of the singular behavior arising in \eqref{eq:singular_elliptic}.

The correspondence is formulated in Theorems \ref{th:main_equivalence} and \ref{th:equivalence-weak}, which identify two qualitatively different regimes. In the setting of Theorem \ref{th:main_equivalence}, referred to as the \emph{Trace Regime}, the regular part is continuous. In this case, the relation between the coefficient of the singular component and the value of the regular part at $x=0$ exactly reproduces the boundary condition defining the domain of $\Delta_\alpha$, for a unique choice of the parameter $\alpha\in\mathbb R$. Conversely, in the regime described by Theorem \ref{th:equivalence-weak}, referred to as the \emph{Free Regime}, the regular part is itself singular at the origin. As a consequence, no trace condition can be imposed and no distinguished value of $\alpha$ is selected. 

In the Trace Regime, there is also a natural variational setting associated with \eqref{eq:point_elliptic1}, which is rigorously described in Theorem \ref{th:main_equivalence}. 

Our second goal is to exploit this variational framework to investigate the existence and qualitative properties of solutions to \eqref{eq:singular_elliptic}. Restricting to the source case, we first prove Theorem \ref{th:main_critical_points} on the existence of infinitely many radial singular solutions, by applying the mountain-pass method of Ambrosetti-Rabinowitz to the action functional associated with the point interaction. This result is of independent interest, as it allows us to recover classical results through variational arguments. Moreover, in the case $d=2$, we obtain two further relevant consequences. The first one, Theorem \ref{th:structure}, is a structure result, identifying singular ground states (i.e., positive solutions to \eqref{eq:singular_elliptic}) with minimizers of the action. The second one, Theorem \ref{th:nodal}, gives the existence of infinitely many singular \emph{nodal} (sign-changing) solutions. To the best of our knowledge, these results are new and suggest that the connection between problems \eqref{eq:singular_elliptic} and \eqref{eq:point_elliptic1} is both effective and promising.

\subsection*{Structure of the paper}
In Section~\ref{sec:mainresults} we state our main theorems, after collecting the necessary preliminaries on point interactions. We also provide an overview of classical results on elliptic equations with isolated singularities. Section~\ref{sec:Proofs} is devoted to the proofs of the main theorems, preceded by a series of preliminary technical lemmas. In Section~\ref{sec:finalremarks} we collect some further remarks and discuss possible extensions of our framework. Finally, in Appendix~\ref{app} we present the proof of a technical lemma, due to Brezis and Lions, in the form required for our purposes.


\subsection*{Notation}
\begin{itemize}
\item[-] We denote by $C$ a positive constant, which may vary from line to line. 
\item[-] Given two positive quantities $X$, $Y$, we write $X\lesssim Y$ if $X\leqslant CY$. 
\item[-]Given real valued functions $f$ and $g$, we write $f\sim g$ as $x\to x_0$ when $\lim_{x\to x_0}g^{-1}f\neq 0$ (finite), and $f=o(g)$ as $x\to x_0$ when $\lim_{x\to x_0}g^{-1}f=0$. 
\item[-] Given $p\in\R$, the symbol $p^-$ (resp.~$p^+$) means $p-\eps$ (resp.~$p+\eps$) for every $\eps>0$ sufficiently small; similarly $\infty-$ means ``every $p$ sufficiently large''. \item[-] Given a real valued function $f$, we denote respectively by $f^{+}:=\max\{f,0\}$ and $f^{-}:=\max\{-f,0\}$ its positive and negative parts, so that $f=f^+-f^-$. 
\item[-] The symbols $L^p(\R^d)$ and $H^{s,p}(\R^d)$ denote the usual Lebesgue and Sobolev (Bessel-potential) spaces of real-valued functions on $\R^d$, with the customary convention that $H^{0,p}(\R^d):=L^p(\R^d)$, $H^s(\R^d):=H^{s,2}(\R^d)$. 
\item[-] For function spaces, the subscripts $0$, $b$, and $\mathrm{rad}$ denote compact support, boundedness, and radial symmetry, respectively.  
\item[-] Given a Banach space $(X,\|\cdot\|)$ and $R>0$, we set $B_R:=\{x\in X\,|\,\|x\|\leqslant R\}$, $B_R^*=B_R\setminus\{0\}$, $\partial B_R=\{x\in X\,|\,\|x\|=R\}$.
\item[-] We write $X'$ for the topological dual of $X$, and denote by $\langle \cdot,\cdot\rangle_{X\times X'}$ the duality product, omitting the subscript when we consider the standard $L^2\times L^2$ product.
\end{itemize}
 
\section{Main results}\label{sec:mainresults}
 
\subsection{Point interactions}\label{subsec:Point} In order to provide a precise statement of our results, we first recall the rigorous construction and main features of Schr\"odinger operators with point interactions, based on \cite[Chapters I.1, I.5]{albeverio-solvable}. Although the results in \cite{albeverio-solvable} are formulated for complex-valued functions, they remain valid in the real-valued setting, see Remark \ref{re:est-re} below.

The densely defined, symmetric operator $-\Delta_{|\mathcal{C}_0^{\infty}(\R^d\setminus\{0\};\,\R)}$, when $d=2,3$, admits a one-parameter family $-\Delta_{\alpha}$, $\alpha\in\overline{\R}=\R\cup\{\infty\}$, of semi-bounded below, self-adjoint extensions on $L^2(\R^d)$. The extension corresponding to $\alpha=\infty$ is the free Laplace operator (with $H^2(\R^d)$ as domain of self-adjointness), whereas all the others, with $\alpha\in\R$, realize non-trivial point interactions at the origin.

Given $\omega>0$, and denoting by $\mathcal{G}_{\omega}:=(-\Delta+\omega)^{-1}\delta\in L^2(\R^d)$ the Green kernel (a.k.a.~Green function) of $-\Delta+\omega$, the domain and action of $-\Delta_{\alpha}$, with $\alpha\in\R$, are given respectively by
\begin{gather}\label{domain}\mathcal{D}(-\Delta_{\alpha})=\big\{u\in L^2(\R^d)\,:\,u=f+q\mathcal{G}_{\omega},\, f\in H^2(\R^d),\,\beta_{\alpha}(\omega)q=f(0)\big\},\\
\label{action}(-\Delta_{\alpha}+\omega)u=(-\Delta+\omega)f,
\end{gather}
where 
\begin{equation}\label{eq:beta_alfa}
\beta_{\alpha}(\omega)=\begin{cases}
\alpha+\frac{\gamma}{2\pi}+\frac{1}{2\pi}\ln\frac{\sqrt{\omega}}{2}&d=2\\
\alpha+\frac{\sqrt{\omega}}{4\pi}&d=3,
\end{cases}
\end{equation}
and $\gamma\approx0.577$ is the Euler-Mascheroni constant.
\begin{remark}\label{re:est-re}
In \cite{albeverio-solvable} it is proved that \eqref{domain}-\eqref{action} above, in the complex-valued setting, provide the non-trivial self-adjoint extensions on $L^2(\R^d;\C)$ of $-\Delta_{|\mathcal{C}_0^{\infty}(\R^d\setminus\{0\};\,\C)}$. However these operators are \emph{real}, in the sense that both their domain and action are invariant under complex conjugation. Therefore, their restriction to real-valued functions yields precisely the non-trivial self-adjoint extensions on $L^2(\R^d;\R)$ of $-\Delta_{|\mathcal{C}_0^{\infty}(\R^d\setminus\{0\};\,\R)}$.
\end{remark}
We briefly comment on the above characterization. Any element of the domain is the sum of a {\em regular part}, $f\in H^2(\R^d)$, and a {\em singular part}, $q\mathcal{G}_{\omega}$, proportional to the Green kernel of the Laplacian. Moreover, a linear relation exists between the coefficient $q$ of the singular part, usually called \emph{charge}, and the evaluation at the singularity point (i.e.~at $x=0$) of the regular part. In \eqref{domain}, the \emph{regular} component $f$ may depend on $\omega>0$, whereas the charge $q$ is independent of $\omega$, since $\G_{\omega_1}-\G_{\omega_2}\in H^2(\R^d)$ for any $\omega_1\neq\omega_2$. We point out that the validity of the boundary condition $\beta_{\alpha}(\omega)q=f(0)$ is also independent on the choice of $\omega>0$, for $(\G_{\omega_1}-\G_{\omega_2})(0)=\beta_{\alpha}(\omega_2)-\beta_{\alpha}(\omega_1)$ for any $\omega_1\neq\omega_2$. The above information makes it clear that the domain of $-\Delta_{\alpha}$ is independent of the auxiliary parameter $\omega>0$. Regarding the action of the operator, $-\Delta_{\alpha}+\omega$ acts on a domain element $u$ as the free operator $-\Delta+\omega$ acts on its regular part $f$. This entails that any $u\in \mathcal{D}(-\Delta_{\alpha})$ satisfies
\begin{equation}\label{eq:dirac_elliptic}
-\Delta_\alpha u=-\Delta u-q\delta_0 \quad\text{in}\quad \mathcal{D}'(\R^d),  
\end{equation} 
that also explains the name {\em $\delta$ interaction} or {$\delta$ potential}  sometimes used in the literature (observe however that $q$ depends on $u$, so that $q\delta_0$ is not properly a source).

We recall for future use that
$\mathcal{G}_{\omega}$ is positive, radially decreasing, exponentially decaying at infinity, and smooth outside the origin, with
\begin{equation}\label{eq:asy}
\mathcal{G}_{\omega}(x)\sim\begin{cases}
-\dfrac{1}{2\pi}\ln |x|&d=2\\[0.8em]
\dfrac{1}{4\pi |x|}&d=3
\end{cases}\qquad\mbox{as}\quad x\to 0.
\end{equation}
In particular,
\begin{equation}\label{lpgreen}
\mathcal{G}_{\omega}\in L^p(\R^d)\Leftrightarrow\begin{cases}
p\in[1,\infty)\, \ \ d=2\\[0.4em]
p\in[1,3)\,\ \ \ \ d=3.
\end{cases}
\end{equation}

In view of \eqref{domain}, domain elements of $-\Delta_{\alpha}$ may be regular, i.e.~functions in $H^2(\R^d)$, if and only if $q=0$.

The spectrum of $-\Delta_{\alpha}$ is characterized as follows:
\begin{gather*}
\sigma_{\operatorname{ess}}(-\Delta_{\alpha})=\sigma_{\operatorname{ac}}(-\Delta_{\alpha})=[0,+\infty),\qquad \sigma_{\operatorname{sc}}(-\Delta_{\alpha})=\emptyset \\
\sigma_{\operatorname{p}}(-\Delta_{\alpha})=\begin{cases}
	\{-\lambda_{\alpha}\}&(d=2,\,\alpha\in\R)\mbox{ or }(d=3,\,\alpha<0)\\
	\emptyset&(d=3,\,\alpha\geqslant 0),
\end{cases}
\end{gather*}
where $\lambda_{\alpha}$ is the unique positive solution to $\beta_{\alpha}(\lambda_\alpha)=0$, which exists if and only if $(d=2$, $\alpha\in\R)$ or $(d=3$, $\alpha <0)$. The negative eigenvalue $-\lambda_{\alpha}$, when it exists, is simple, and has $\mathcal{G}_{\lambda_{\alpha}}$ as corresponding eigenfunction. When either $(d=3$, $\alpha\geqslant 0)$ or $(d\in\{2,3\}$, $\alpha=\infty)$ i.e.~in absence of eigenvalues, we set by convention $\lambda_{\alpha}=0$, so that in any case $-\lambda_{\alpha}=\inf\sigma(-\Delta_{\alpha})$.  

Given $\alpha\in\R$, $s\geqslant 0$, we introduce the scale of Banach spaces
\begin{equation}\label{eq:singular_spaces}H_{\alpha}^s(\R^d):=\operatorname{Dom}\big((1+\lambda_{\alpha}-\Delta_{\alpha})^{\frac s2}\big),\quad \|u\|_{H_{\alpha}^s}:=\|(1+\lambda_{\alpha}-\Delta_{\alpha})^{\frac s2}u\|_{L^2}.
\end{equation}
When $\alpha=\infty$, we recover the classical scale of Sobolev spaces $H^s(\R^d)$. Let us focus then on the regime $\alpha\in\R$, which corresponds to a non-trivial singular interaction at the origin. 
The case $s=2$ corresponds to the definition of the operator domain, namely $H^2_\alpha(\R^d)=\mathcal{D}(-\Delta_\alpha)$.\\
The case $s=1$ yields the domain of the quadratic form $\mathcal{Q}_{\alpha}$ associated to $-\Delta_{\alpha}$, and given $\omega>0$ we have the explicit characterization
\begin{align}\label{eq:form_domain}
H^1_{\alpha}(\R^d)&=\big\{u\in L^2(\R^d)\,:\,u=f+q\G_\omega\,\,f \in H^1(\R^d),\,q\in\R\big\},\\
\label{eq:quad_form1}
	\mathcal{Q}_{\alpha}(u)&=\int_{\R^d}\left(|\nabla f|^2+\omega|f|^2-\omega|u|^2\right)dx+\beta_{\alpha}(\omega)q^2.
\end{align}
Note that $H^1_{\alpha}(\R^d)$ is actually independent of $\alpha\in\R$, see also point (ii) of Proposition \ref{pr:hasp}. In other words, the domain of the quadratic form associated with $-\Delta_\alpha$ is the same for every $\alpha\in\R$, although the quadratic form itself depends on $\alpha$. Observe that elements in the form domain still admit a well-defined decomposition into a regular and a singular part, but unlike \eqref{domain} no relation between the regular component and the charge $q$ holds. In other words, the boundary condition at the singularity is lost. Finally, for $s\in(0,2]$, we define $H_{\alpha}^{-s}(\R^d)$ as the dual space of $H_{\alpha}^{s}(\R^d)$.

\subsection{The equivalence}
Associated with a Schr\"odinger operator $-\Delta_{\alpha}$ with point interaction, we consider the semilinear equation \eqref{eq:point_elliptic1}, here reproduced:
\begin{equation*}
	(-\Delta_{\alpha}+\lambda)u=\sigma|u|^{p-1}u.
\end{equation*}
As anticipated, our goal is to establish a rigorous connection between equations \eqref{eq:singular_elliptic} and \eqref{eq:point_elliptic1}. We will work in the following regime:
\begin{equation}\label{reg:pd}
\begin{cases}
p\in(1,\infty)&\mbox{if }d=2,\\
p\in(1,3)&\mbox{if }d=3.
\end{cases}
\end{equation}
Moreover, we impose the following a priori bound on solutions to \eqref{eq:singular_elliptic}: 
\begin{equation}\label{eq:below-green}\exists\;c,R>0\,:\,
\begin{cases}
\begin{aligned}
& -c \mathcal{G}_{\lambda}(x) \leqslant u(x)  && \forall x \in B_R^* && \text{if } \sigma = 1,\\
& -c \mathcal{G}_{\lambda}(x) \leqslant u(x) \leqslant c \mathcal{G}_{\lambda}(x) && \forall x \in B_R^* && \text{if } \sigma = -1.
\end{aligned}
\end{cases}
\end{equation}
Conditions \eqref{reg:pd} and \eqref{eq:below-green} guarantee that singular solutions to \eqref{eq:singular_elliptic} have leading local singularities proportional to the Green function $\mathcal{G}_{\lambda}$; we refer to Section \ref{isolatedsol} for further details, as well as for an overview of other type of singularities. More precisely, we are going to show that any solution $u$ to \eqref{eq:singular_elliptic} can be written as $u=f+q\mathcal{G}_{\lambda}$, where $f$ is the \emph{regular} component (i.e.~less singular than the Green kernel), and $q\in\R$ is given by the identity
\begin{equation}\label{eq:qlim}
q:=\lim_{x\to 0}\mathcal{G}_{\lambda}^{-1}u(x).
\end{equation}
This is exactly in line with the structure of point interactions we aim to exploit.

\begin{remark}
We point out that the a priori bounds \eqref{eq:below-green} will allow us to study also sign-changing solutions, extending classical results on isolated singularities, where the focus was only on positive solutions to \eqref{eq:singular_elliptic}.
\end{remark}

In what follows, we distinguish two qualitatively different regimes within the range of $p$ given by \eqref{reg:pd}.\smallskip

\underline{Trace Regime}: ($d=2,\, p>1$) or ($d=3,\,1<p<2$). In this regime, the regular component $f$ is continuous. In particular, there is a well-defined boundary condition relating $f(0)$ and $q$, which selects a unique value of $\alpha\in\R$ if $q\neq 0$. 

Furthermore, \eqref{eq:point_elliptic1} admits a variational characterization. Consider indeed, for any $\alpha\in\R$ and $\lambda>0$, the action functional 
\begin{equation}\label{eq:action}
S_{\lambda,\alpha}:H_{\alpha}^1(\R^d)\to\R,\quad S_{\lambda,\alpha}(u)={\textstyle\frac12}\left(\mathcal{Q}_{\alpha}(u)+\lambda\|u\|_{L^2}^2\right)-{\textstyle{\frac{\sigma}{p+1}}}\|u\|_{L^{p+1}}^{p+1},
\end{equation}
which is well-defined, since $H_{\alpha}^1(\R^d)\hookrightarrow L^{p+1}(\R^d)$ (as it follows by \eqref{lpgreen}, \eqref{eq:form_domain} and the standard Sobolev embedding), of class $\mathcal{C}^1$, and its critical points are exactly the solutions to \eqref{eq:point_elliptic1}, see e.g.~\cite[Proposition 4.2]{PoWa}. When $\alpha=\infty$, \eqref{eq:action} reduces to the standard nonlinear scalar-field action functional
\begin{equation}\label{eq:action_free}
S_{\lambda,\infty}:H^1(\R^d)\to\R,\quad S_{\lambda,\infty}(u)={\textstyle\frac12}\left(\|\nabla u\|^2_{L^2}+\lambda\|u\|_{L^2}^2\right)-{\textstyle{\frac{\sigma}{p+1}}}\|u\|_{L^{p+1}}^{p+1}.
\end{equation}

Our first main result shows the precise connection between solutions to  \eqref{eq:singular_elliptic}, solutions to \eqref{eq:point_elliptic1}, and critical points of $S_{\lambda,\alpha}$.
 
\begin{theorem}[Trace Regime]\label{th:main_equivalence}
Let $\lambda>0$, $\sigma=\pm 1$, and assume either $d=2$, $p\in(1,\infty)$, or $d=3$, $p\in(1,2)$. The following conditions are equivalent:
\begin{itemize}
\item[(i)] $u\in\mathcal{C}^2(\R^d\setminus\{0\})$, with $u(x)\to0$ as $|x|\to\infty$, satisfies \eqref{eq:below-green} and solves equation \eqref{eq:singular_elliptic};
\item[(ii)] there exist $\alpha\in\overline{\R}$ and $s>\frac{d}{2}$ such that $u\in H_{\alpha}^s(\R^d)$ and \eqref{eq:point_elliptic1} holds as an identity in $H_\alpha^{s-2}(\R^d)$;
\item[(iii)] there exists $\alpha\in \overline{\R}$ such that $u\in H^1_\alpha(\R^d)$ is a critical point of $S_{\lambda,\alpha}$.
\end{itemize}

If the above conditions are satisfied, then $\exists\,q\in\R$ determined by \eqref{eq:qlim}, and setting $f:=u-q\mathcal{G}_{\lambda}$ one has $f\in \mathcal{C}(\R^d)$. Moreover, $u$ extends to a $\mathcal{C}^2$ function on $\R^d$ if and only if $q=0$, in which case $u\in H^2(\R^d)$. If $q\neq0$, then $\alpha$ is uniquely determined by
\begin{equation}\label{eq:q-alpha}
\beta_{\alpha}(\lambda)q=f(0).
\end{equation}
\end{theorem}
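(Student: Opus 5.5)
The proof will go around the cycle (i)$\Rightarrow$(ii)$\Rightarrow$(iii)$\Rightarrow$(i). The substance lies in (i)$\Rightarrow$(ii), which also yields the structural statements about $q$, $f$ and $\alpha$.

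\emph{Step (i)$\Rightarrow$(ii).} Let $u$ be as in (i). First we show that $u$ is integrable near the origin, with $|u|\lesssim \G_\lambda$ in $B_R^*$, and that $u$ decays exponentially at infinity.
\begin{itemize}
\item Near the origin with $\sigma=-1$, this is exactly \eqref{eq:below-green}.
\item Near the origin with $\sigma=1$, we use the lower bound in \eqref{eq:below-green}. Then $v=u+c\G_\lambda\geqslant 0$ satisfies $(-\Delta+\lambda)v\geqslant -C\G_\lambda^p$ in $B_R^*$, with $\G_\lambda^p\in L^1$ near $0$ by \eqref{reg:pd}. A Brezis--Lions type argument (the lemma of the Appendix) gives $u\in L^1_{\rm loc}$, $|u|^p\in L^1_{\rm loc}$, and the distributional identity
\[
(-\Delta+\lambda)u=\sigma|u|^{p-1}u+q\delta_0\quad\text{in }\mathcal D'(\R^d)
\]
for some $q\in\R$.
\item At infinity, $u\to0$ together with $\lambda>0$ gives exponential decay by comparison.
\end{itemize}

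Next set $f:=u-q\G_\lambda$. Then $(-\Delta+\lambda)f=\sigma|u|^{p-1}u=:g$. In the trace regime $g\in L^r$ for some $r>d/2$: for $d=2$ any $r<\infty$ works, while for $d=3$ we need $p<2$ to get $|x|^{-p}\in L^{r}$ with $r>3/2$. Hence $f=(-\Delta+\lambda)^{-1}g\in H^{2,r}\subset\mathcal C(\R^d)$.

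This gives the limit \eqref{eq:qlim}, since $f/\G_\lambda\to0$. It also shows that $q=0$ iff $u$ is bounded, and in that case elliptic regularity gives $u\in\mathcal C^2(\R^d)\cap H^2$, with $\alpha=\infty$. If $q\neq0$, we define $\alpha$ by \eqref{eq:q-alpha}; this is possible and unique because $\beta_\alpha(\lambda)$ is an affine bijection in $\alpha$. Then $u=f+q\G_\lambda$ with $\beta_\alpha(\lambda)q=f(0)$.

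To place $u$ in $H^s_\alpha$ for some $s>d/2$, note that $f\in H^{2,r}\cap H^1$ is not necessarily in $H^2$. We therefore use the characterization of $H^s_\alpha$ for $s<2$ (Proposition~\ref{pr:hasp}). For $d/2<s<2$ with $s$ close to $d/2$, this characterization should read as ``$f\in H^s$ plus the boundary condition''. Sobolev embedding shows $f\in H^s$ for such $s$. The equation in $H^{s-2}_\alpha$ then follows from \eqref{action}, since $(-\Delta_\alpha+\lambda)u=(-\Delta+\lambda)f=g$.

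\emph{Step (ii)$\Rightarrow$(iii).} Since $s>d/2\geqslant1$, we have $H^s_\alpha\subset H^1_\alpha$. Testing the identity against $H^1_\alpha\subset H^{2-s}_\alpha$ gives $S_{\lambda,\alpha}'(u)=0$. Here we use that $|u|^{p-1}u\in H^{-1}_\alpha$ by the embedding $H^1_\alpha\hookrightarrow L^{p+1}$.

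\emph{Step (iii)$\Rightarrow$(i).} A critical point satisfies $(-\Delta_\alpha+\lambda)u=g$ weakly, with $g=\sigma|u|^{p-1}u$. We write $u=f+q\G_\lambda$ with $f\in H^1$.
\begin{itemize}
\item Testing against $\mathcal C_0^\infty(\R^d\setminus\{0\})$ gives \eqref{eq:singular_elliptic} in $\mathcal D'(\R^d\setminus\{0\})$.
\item Bootstrap: $u\in L^{p+1}$, so $g\in L^{(p+1)/p}$. Then $f=(-\Delta+\lambda)^{-1}g$ gains regularity, using $\G_\lambda\in L^{3-}$ for $d=3$, and iterating yields $g\in L^r$ with $r>d/2$. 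Hence $f$ is continuous and bounded.
\item Testing against $\G_\lambda$-type functions recovers $\beta_\alpha(\lambda)q=f(0)$.
\item Local elliptic regularity away from $0$ gives $u\in\mathcal C^2(\R^d\setminus\{0\})$.
\item Decay of $u$ at infinity follows from $f\in H^{2,r}$ together with the decay of $\G_\lambda$.
\item The bound \eqref{eq:below-green} follows from $|u|\leqslant\|f\|_\infty+|q|\G_\lambda\leqslant c\G_\lambda$ near $0$, since $\G_\lambda\to\infty$ there.
\end{itemize}

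\emph{Main obstacle.} The hard step is the first one in the source case $\sigma=1$: ruling out wild singularities using only a one-sided bound. We need to show that $u\in L^p_{\rm loc}$ and that the defect measure at $0$ is exactly $q\delta_0$ rather than something stronger. I would first try the Brezis--Lions lemma from the Appendix applied to $v=u+c\G_\lambda$, using $\G_\lambda^p\in L^1$ from \eqref{reg:pd}.
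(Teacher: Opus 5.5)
Your architecture is the paper's. It runs the same cycle (i)$\Rightarrow$(ii)$\Rightarrow$(iii)$\Rightarrow$(i). It uses the Brezis--Lions Lemma to reach $(-\Delta+\lambda)u=\sigma|u|^{p-1}u+q\delta_0$, then exponential decay at infinity and the splitting $u=f+q\G_\lambda$ with $f\in H^s$, $s>\frac d2$. It selects $\alpha$ through \eqref{eq:q-alpha} together with point (iii) of Proposition~\ref{pr:hasp}, and bootstraps the regularity of $f$ for the converse. Your uniqueness argument for $\alpha$ is correct and is the computation in Remark~\ref{re:strong}: testing the weak equation with $\G_\lambda\in H^1_\alpha$ gives $\beta_\alpha(\lambda)q=\int g\,\G_\lambda=f(0)$. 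Getting decay at infinity in (iii)$\Rightarrow$(i) from $H^{2,r}\hookrightarrow\mathcal C_0$ is a slightly cleaner alternative to the paper's uniform-continuity argument.

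One step in (i)$\Rightarrow$(ii) does not go through as written. In the source case you announce $|u|\lesssim\G_\lambda$ on $B_R^*$, but the Brezis--Lions Lemma only gives $u,\,|u|^p\in L^1_{\mathrm{loc}}$ and a bound on spherical means, not a pointwise upper bound. You then use $|g|\lesssim|x|^{-p}$ to get $g\in L^r$ with $r>\frac d2$. That pointwise bound is available only a posteriori, once $f$ is known to be bounded, so you cannot use it at this stage.

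The repair is the one-step integrability bootstrap the paper uses (see \eqref{lpup}):
\begin{itemize}
\item Exponential decay gives $g\in L^1(\R^d)$.
\item Hence $f=\G_\lambda*g\in L^t$ for every $t<\frac{d}{d-2}$, and for every $t<\infty$ if $d=2$.
\item So $u\in L^{3-}(\R^3)$ and $g\in L^{(3/p)-}(\R^3)$, and $\frac3p>\frac32$ exactly when $p<2$.
\end{itemize}

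There are two smaller points. First, for $\sigma=-1$ the two-sided bound gives integrability, but you still need to show that the defect at the origin is a multiple of $\delta_0$. The paper applies Proposition~\ref{lem:Brezis-Lions} in both sign cases, since $\sigma u\geqslant -c\G_\lambda$. Second, when $s<2$, the identity $(-\Delta_\alpha+\lambda)u=(-\Delta+\lambda)f$ in $H^{s-2}_\alpha$ does not follow from \eqref{action}, which holds only on $H^2_\alpha$. You need Lemma~\ref{le:csdi} together with $H^{s-2}_\alpha\cong H^{s-2}$ for $s>\frac d2$, which is point (i) of Proposition~\ref{pr:hasp} by duality.
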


The meaning of the above result is the following. In the trace regime, the set of singular solutions to \eqref{eq:singular_elliptic} satisfying \eqref{eq:below-green} can be parametrized by  $\alpha\in\mathbb{R}$, which determines the boundary condition \eqref{eq:q-alpha} (well-defined since the regular component is continuous). Once $\alpha$ is fixed, we obtain the operator-theoretic formulation \eqref{eq:point_elliptic1}, which in turn yields the variational characterization in terms of the action $S_{\lambda,\alpha}$.\par\smallskip

\underline{Free Regime}. In this regime, the regular component of a singular solution is not bounded at the origin, with a local singularity weaker than $\mathcal{G}_{\lambda}$. Thus, no boundary condition is selected, and $\alpha$ can be chosen arbitrarily in $\R$.

\begin{theorem}[Free Regime]
\label{th:equivalence-weak}
Let $\lambda>0$, $\sigma=\pm1$, $d=3$, and $p\in[2,3)$. The following conditions are equivalent:
\begin{itemize}
\item[(i)] $u\in\mathcal{C}^2(\R^3\setminus\{0\})$, with $u(x)\to0$ as $|x|\to\infty$, satisfies \eqref{eq:below-green}, and solves equation \eqref{eq:singular_elliptic};
\item[(ii)] there exists $\alpha\in\overline{\R}$ and  $s\in\left(\frac12,2\right]$ such that $u\in H^s_\alpha(\R^3)$, and \eqref{eq:point_elliptic1} holds as an identity in $H^{s-2}(\R^3)$.
\end{itemize}
If the above conditions are satisfied, then $\exists\,q\in\R$ determined by \eqref{eq:qlim}, and $u$ extends to a $\mathcal{C}^2$ function on $\R^3$ if and only if $q=0$, in which case $u\in H^2(\R^3)$. If $q\neq0$, then condition (ii) holds for any $\alpha\in\R$, $s\in(\frac{1}{2},\frac{7}{2}-p)$, and $f:=u-q\G_{\lambda}$ is singular at the origin, with 
\begin{equation}\label{subleading}
f(x)
\sim
\begin{cases}
\ln |x|,&p=2,\\
|x|^{2-p},&p\in(2,3),
\end{cases}
\qquad\mbox{as }\quad x\to0,
\end{equation}
\end{theorem}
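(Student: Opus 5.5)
We prove the two implications separately, with the main work in (i)$\Rightarrow$(ii); throughout $d=3$ and $p\in[2,3)$.

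\textbf{Step 1: the singularity is Green-type.} Assume (i). Using the a priori bound \eqref{eq:below-green} and $p<3$, we first show that $|u|\lesssim \G_\lambda$ near the origin also when $\sigma=1$; this is where the one-sided bound has to be upgraded. Since $|u|^{p-1}u$ is then $O(|x|^{-p})$ with $p<3$, the nonlinearity lies in $L^1_{\rm loc}(\R^3)$. By a Brezis--Lions type argument (the lemma in Appendix~\ref{app}), $u\in L^1_{\rm loc}$ and $-\Delta u+\lambda u-\sigma|u|^{p-1}u$ is a distribution supported at $0$ of order zero. Hence
\[
(-\Delta+\lambda)u=\sigma|u|^{p-1}u+q\delta_0 \quad\text{in } \mathcal D'(\R^3)
\]
for some $q\in\R$. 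Decay at infinity together with elliptic estimates (exponential decay for $|x|$ large) then gives
\[
u=q\G_\lambda+f,\qquad f:=(-\Delta+\lambda)^{-1}\big(\sigma|u|^{p-1}u\big).
\]

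\textbf{Step 2: regularity and behaviour of $f$.} Write $g=\sigma|u|^{p-1}u$. Then $g\in L^r(\R^3)$ for every $r<3/p$, so by Sobolev mapping $f\in H^{2,r}$ and hence $f\in H^{s}(\R^3)$ for $s<\tfrac72-p$. Here $\tfrac72-p>\tfrac12$ since $p<3$, and the exponent follows from $H^{2,r}\hookrightarrow H^{s}$ with $2-\tfrac3r=s-\tfrac32$.

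Since $f=o(\G_\lambda)$, we get $q=\lim_{x\to0}\G_\lambda^{-1}u$, which is \eqref{eq:qlim}. If $q\neq0$, then $g\sim \sigma|q|^{p-1}q\,(4\pi|x|)^{-p}$ near $0$. Convolving with the Newtonian kernel gives $f\sim |x|^{2-p}$ when $p>2$, and a logarithmic term $\sim\ln|x|$ when $p=2$; this is \eqref{subleading}. The remainder is handled by splitting $g$ into its leading homogeneous part and a less singular part, whose potential is bounded or of lower order.

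If $q=0$, then $|u|\lesssim$ a smaller singularity, and bootstrapping in $L^r$ spaces plus Schauder estimates gives $u\in H^2\cap\mathcal C^2(\R^3)$. Conversely, if $u$ is $\mathcal C^2$ then $q=0$ directly from \eqref{eq:qlim}.

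\textbf{Step 3: (i)$\Rightarrow$(ii).} If $q\neq0$, fix any $\alpha\in\R$. Since $f\in H^s$ with $s<\tfrac32$, no trace condition enters at this level of regularity: for $s<\tfrac32$ we have $H^s_\alpha=H^s\dotplus\operatorname{span}\{\G_\lambda\}$, which we take from the characterization of the scale in Proposition~\ref{pr:hasp}. Hence $u\in H^s_\alpha$. For $\phi\in H^{2-s}$, pair $(-\Delta_\alpha+\lambda)u$ with $\phi$ using \eqref{eq:dirac_elliptic}. The delta term cancels, because $-\Delta_\alpha$ acting on $q\G_\lambda$ removes exactly $q\delta_0$. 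This yields \eqref{eq:point_elliptic1} in $H^{s-2}$, and explains why every $\alpha$ works.

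\textbf{Step 4: (ii)$\Rightarrow$(i).} Suppose $u\in H^s_\alpha$ with $s>\tfrac12$. Then $u=f+q\G_\lambda$ with $f\in H^s$. Testing the equation against $\phi\in\mathcal C_0^\infty(\R^3\setminus\{0\})$ gives \eqref{eq:singular_elliptic} in the distributional sense on the punctured space. Local elliptic regularity away from $0$, bootstrapping from $u\in L^{3/(3/2-s)}_{\rm loc}$, then gives $u\in\mathcal C^2(\R^3\setminus\{0\})$ and decay at infinity.

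For the bound \eqref{eq:below-green}, we need $f=o(\G_\lambda)$ near $0$. Write $f=(-\Delta+\lambda)^{-1}g$. One bootstrap on the integral equation, using $\G_\lambda\in L^{p}$ and $p<3$, shows that $|f|$ is bounded by $|x|^{2-p}$ or $\ln|x|$. This is weaker than $|x|^{-1}$, so $u$ stays between multiples of $\pm\G_\lambda$.

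\textbf{Main obstacle.} The hardest step is Step 1 in the source case $\sigma=1$: only a lower bound on $u$ is available, so we must exclude singularities stronger than $\G_\lambda$. To do this I would compare with the equation $-\Delta u\le \lambda^{-}\ldots$, i.e.\ use that $u^+$ is a subsolution with $L^1_{\rm loc}$ right-hand side, following the Lions and Brezis--Lions approach. The aim is to show that $u\in L^p_{\rm loc}$ and that the delta coefficient is finite.
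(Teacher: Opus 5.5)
Your overall architecture matches the paper's: Brezis--Lions, the splitting $u=f+q\G_\lambda$ with $f\in H^{(7/2-p)-}$, Proposition~\ref{pr:hasp}(ii) with $\alpha$ arbitrary, cancellation of the $\delta$ term, and the leading correction $\G_\lambda*\G_\lambda^p$. However, Step~1 rests on a claim you never prove: that for $\sigma=1$ the one-sided bound can first be upgraded to $|u|\lesssim\G_\lambda$ near $0$. The mechanism under ``Main obstacle'' does not supply it. By Kato, $u^+$ satisfies $(-\Delta+\lambda)u^+\le (u^+)^p$ away from $0$, but local integrability of $(u^+)^p$ at the origin is exactly what is unknown, so this is circular. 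The missing observation is that no upper bound is needed. Since $t\mapsto|t|^{p-1}t$ is increasing, $u\ge -c\G_\lambda$ gives $(-\Delta+\lambda)u=|u|^{p-1}u\ge -c^p\G_\lambda^p$ on $B_R^*$, and $\G_\lambda^p\in L^1_{\mathrm{loc}}(\R^3)$ because $p<3$. So $u$ is a supersolution with an $L^1_{\mathrm{loc}}$ lower bound, which together with $u\ge -c\G_\lambda$ is precisely the hypothesis of Proposition~\ref{lem:Brezis-Lions}. The lemma then \emph{outputs} $u\in L^1_{\mathrm{loc}}$ and $-\Delta u=\varphi+q\delta$ with $\varphi\in L^1_{\mathrm{loc}}$, hence $|u|^{p-1}u=\sigma(\varphi+\lambda u)\in L^1_{\mathrm{loc}}$. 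After exponential decay at infinity, $u=q\G_\lambda+\G_\lambda*(L^1)\in L^{3-}$, which gives $|u|^{p-1}u\in L^{3/p-}$ and $f\in H^{(7/2-p)-}$ without any pointwise information.

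Consequently the pointwise claims of Step~2 ($f=o(\G_\lambda)$ and $g\sim\sigma|q|^{p-1}q(4\pi|x|)^{-p}$) are not yet available in the source case. They must come from the same estimate as your Step~4, which is where the real work lies, and there ``one bootstrap on the integral equation'' does not suffice. A priori $f$ is only in $H^{(7/2-p)-}(\R^3)$, below the $L^\infty$ threshold $H^{3/2}$; it need not be radial; and when $q\neq0$ it is genuinely unbounded, so an integrability bootstrap on $f$ itself stalls. The paper instead splits $f=f_{\mathrm{reg}}+f_{\mathrm{rad}}$, with $f^{(k)}_{\mathrm{rad}}=\G_\lambda*\sigma\big(|f^{(k-1)}_{\mathrm{rad}}+q\G_\lambda|^{p-1}(f^{(k-1)}_{\mathrm{rad}}+q\G_\lambda)\big)$ radial. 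The Lipschitz bound and Hardy--Littlewood--Sobolev lower the reciprocal integrability exponent of $f^{(k)}_{\mathrm{reg}}=f-f^{(k)}_{\mathrm{rad}}$ by $(3-p)/3$ at each step until it is bounded. Meanwhile $f_{\mathrm{rad}}\in H^{(7/2-p)-}_{\mathrm{rad}}$ is controlled pointwise by the radial weighted Sobolev embedding, giving $|f_{\mathrm{rad}}(x)|\lesssim|x|^{-3/2+(7/2-p)-}=o(\G_\lambda)$. Once $f=o(\G_\lambda)$ is obtained this way in both directions, your splitting argument for \eqref{subleading} goes through, with remainder $\lesssim|x|^{3-2p}$ whose potential is $o(|x|^{2-p})$. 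In the absorption case, where the two-sided bound is assumed, your direct convolution estimate in (i)$\Rightarrow$(ii) is a legitimate shortcut.
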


The singular behavior in \eqref{subleading} corresponds to the first non-linear correction to $\G_{\lambda}$, formally given by $\sigma|q|^{p-1}q\G_{\lambda}\ast\G_{\lambda}^p$.

\begin{remark}
Let $s,\alpha$ be as in condition (ii) of Theorems \ref{th:main_equivalence} and \ref{th:equivalence-weak} (see Remark \ref{re:strong} for an explicit characterization in terms of $d,p$). Setting $\mathcal{N}(u):=|u|^{p-1}u$, we have $\mathcal{N}$ that maps $H_{\alpha}^s(\R^d)$ into $H^{s-2}(\R^d)$ in the whole range \eqref{reg:pd}. Moreover:
\begin{itemize}
\item In the Trace Regime, $H_{\alpha}^{s-2}(\R^d)\cong H^{s-2}(\R^d)$ (point (i) of Proposition \ref{pr:hasp}), so \eqref{eq:point_elliptic1} holds in an operatorial sense, namely as an identity in $H_{\alpha}^{s-2}(\R^d)$;
\item In the Free Regime, $H_{\alpha}^{s-2}(\R^3)\subsetneq H^{s-2}(\R^3)$ (point (ii) of Proposition \ref{pr:hasp}) and $\mathcal{N}$ does \emph{not} map $H_{\alpha}^s(\R^3)$ into $H_{\alpha}^{s-2}(\R^3)$, so \eqref{eq:point_elliptic1} can be interpreted only as an identity in $H^{s-2}(\R^3)$.
\end{itemize}
\end{remark}

\subsection{Variational structure and critical points of the action}\label{sec:variational}
An important consequence of Theorem \ref{th:main_equivalence} is that, in the Trace Regime,  every singular solution to \eqref{eq:singular_elliptic} is a critical point of  $S_{\lambda,\alpha}$ for a unique $\alpha\in\R$. From now on, we restrict our attention to the source case $\sigma=1$.

The variational analysis of $S_{\lambda,\alpha}$ has attracted growing attention in recent years. One of the main objects of interest is the class of \emph{action ground states}, namely action-minimizing critical points: more precisely, considering the set
\begin{equation*}
\mathsf{C}_{\lambda,\alpha}:=\big\{u \in H_\alpha^1\left(\mathbb{R}^d\right) \mid u \neq 0,\, S_{\lambda,\alpha}^{\prime}(u)=0\big\}.
\end{equation*}
of non-zero critical points of $S_{\lambda,\alpha}$, we define the set of (action) ground states as
\begin{equation*}
\mathsf{G}_{\lambda,\alpha}:=\big\{u \in \mathsf{C}_{\lambda,\alpha} \mid S_{\lambda,\alpha}(u) \leqslant S_{\lambda,\alpha}(v)\quad\forall\, v \in \mathsf{C}_{\lambda,\alpha}\big\}.
\end{equation*}
It is known \cite{ABCT-2d,ABCT-3d,FGI} that, for $\lambda>\lambda_{\alpha}$, $\mathsf{G}_{\lambda,\alpha}$ is non-empty, every ground state is singular and, up to sign (respectively, up to a constant phase in the complex-valued setting), positive and radially decreasing. These ground states give a family of positive singular solutions to \eqref{eq:singular_elliptic}.

Considering the singular, time-dependent nonlinear Schr\"odinger equation
\begin{equation*}
\ii\partial_t\psi=-\Delta_{\alpha}\psi-|\psi|^{p-1}\psi,
\end{equation*}
a ground state $u$ corresponds to the quasi-periodic solution $\psi(t,x)=e^{\ii\lambda t}u(x)$, whose stability/instability is addressed \cite{FGI, finco-noja}. The existence of action-minimizing solutions has been also investigated for singular Schr\"odinger equations with non-local nonlinearities \cite{GMS-CPDE,DPR-SN,DPR-H}, additional Coulomb interactions \cite{boni-gallone}, and Kirchhoff-type terms \cite{DPR-NA}.

Our next main result shows that, for $\lambda>\lambda_{\alpha}$, $S_{\lambda,\alpha}$ admits infinitely many singular critical points besides ground states, partially extending to the singular setting classical results available in the regular case -- see e.g.~\cite{BerLio} and references therein. 
\begin{theorem}\label{th:main_critical_points}
Fix $\sigma=1$, $\alpha\in\R$, $\lambda>\lambda_{\alpha}$, and $d=2$, $p\in(1,\infty)$ or $d=3$, $p\in(1,2)$. Then $S_{\lambda,\alpha}$ admits infinitely many singular, radial critical points.
\end{theorem}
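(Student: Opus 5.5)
We plan to apply the symmetric mountain pass theorem of Ambrosetti--Rabinowitz to $S_{\lambda,\alpha}$ restricted to the radial subspace $H^1_{\alpha,\mathrm{rad}}(\R^d)$. The functional is even and $\mathcal{C}^1$. By the principle of symmetric criticality (the action of $O(d)$ commutes with $-\Delta_\alpha$, since $\mathcal{G}_\omega$ is radial), critical points of the restriction are critical points on the whole of $H^1_\alpha(\R^d)$.

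\textbf{Norm equivalence and geometry near zero.} Since $\lambda>\lambda_\alpha=-\inf\sigma(-\Delta_\alpha)$, the quadratic part $\frac12(\mathcal{Q}_\alpha(u)+\lambda\|u\|_{L^2}^2)$ is equivalent to $\|u\|_{H^1_\alpha}^2$. Using the embedding $H^1_\alpha\hookrightarrow L^{p+1}$, we obtain $S_{\lambda,\alpha}(u)\geqslant c\|u\|^2_{H^1_\alpha}-C\|u\|^{p+1}_{H^1_\alpha}$. Hence $S_{\lambda,\alpha}\geqslant\rho>0$ on a small sphere $\partial B_r$.

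\textbf{Geometry at infinity and Palais--Smale.} On any finite-dimensional radial subspace $E_k$, all norms are equivalent. Since $p+1>2$, this gives $S_{\lambda,\alpha}(u)\to-\infty$ as $\|u\|\to\infty$ in $E_k$, so $S_{\lambda,\alpha}\leqslant 0$ outside a large ball of $E_k$.

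For the Palais--Smale condition, we use the standard Ambrosetti--Rabinowitz computation: $S(u_n)-\frac{1}{p+1}\langle S'(u_n),u_n\rangle=(\frac12-\frac1{p+1})\|u_n\|^2$ (up to the equivalent norm), which shows that PS sequences are bounded. Writing $u_n=f_n+q_n\mathcal{G}_\omega$ with $f_n$ radial and bounded in $H^1$ and $q_n$ bounded, we pass to a subsequence with $q_n\to q$. By Strauss compactness, $H^1_{\mathrm{rad}}\hookrightarrow L^{p+1}$ is compact for $d=2,3$ and $p+1\in(2,2^*)$; here $2^*=6$ when $d=3$ and $p+1<3$. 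Together with $\mathcal{G}_\omega\in L^{p+1}$ by \eqref{lpgreen}, this gives $u_n\to u$ in $L^{p+1}$. Then $\langle S'(u_n)-S'(u),u_n-u\rangle\to0$ yields strong convergence in $H^1_\alpha$.

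The symmetric mountain pass theorem then produces an unbounded sequence of critical values $c_k\to\infty$, and hence infinitely many radial critical points.

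\textbf{Singularity of the critical points.} This step needs care. We show that all but finitely many of these critical points have $q\neq0$. Suppose a critical point $u$ has $q=0$. Then $u\in H^1$ and $S'_{\lambda,\alpha}(u)=0$. Testing against $\mathcal{G}_\omega$, i.e.\ against elements of the form domain with a nonzero charge, gives the condition $f(0)=\beta_\alpha(\omega)q=0$. By Theorem~\ref{th:main_equivalence}, $u$ is then a $\mathcal{C}^2$ radial solution of the free equation on $\R^d$ with $u(0)=0$.

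Radial ODE uniqueness rules out this case. A radial $\mathcal{C}^2$ solution satisfies $u'(0)=0$, and if also $u(0)=0$, uniqueness for the singular ODE $u''+\frac{d-1}{r}u'=\lambda u-|u|^{p-1}u$ with data $(0,0)$ at $r=0$ forces $u\equiv0$. Hence every nonzero critical point has $q\neq0$, so all of them are singular.

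The main obstacle we anticipate is justifying the regular-part trace condition $f(0)=0$ rigorously from criticality. We would extract it from the equivalence in Theorem~\ref{th:main_equivalence}, namely the domain condition \eqref{eq:q-alpha} applied with $q=0$, which forces $f(0)=0$.
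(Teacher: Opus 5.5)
Your proposal is correct and follows essentially the same route as the paper. The paper applies the Ambrosetti--Rabinowitz symmetric mountain pass theorem on the radial subspace: positivity near zero, norm equivalence on finite-dimensional subspaces, and Palais--Smale via the identity $S-\frac{1}{p+1}\langle S',u\rangle$ plus Strauss compactness. It then shows that a critical point with $q=0$ must satisfy $u(0)=0$, and hence vanishes by uniqueness for the radial ODE.

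Your small deviations are all sound and, if anything, fill in details the paper leaves implicit. These are the direct embedding $H^1_\alpha\hookrightarrow L^{p+1}$ instead of Gagliardo--Nirenberg, the explicit use of symmetric criticality, and deriving $u(0)=0$ by testing against $\mathcal{G}_\omega$ as in Remark~\ref{re:strong}.
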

The above result fits into the broader context of variational analysis for models with zero-range perturbations. Its proof relies on a general procedure developed by Ambrosetti and Rabinowitz in \cite{AR-73} -- see Section \ref{sec:var} for details.  In addition, as shown below, when combined with other structural properties of Schr\"odinger equations with point interactions, it can also provide new insights into elliptic PDEs with isolated singularities.


\subsection{Positive versus nodal solutions in \texorpdfstring{$\R^2$}{R2}}
Let us restrict now our attention to dimension $d=2$, still focusing on the source case. It is proved in \cite{fukaya-uniqueness} that for any fixed $\alpha\in\R$, $\lambda>\lambda_{\alpha}$ and $p>1$, equation \eqref{eq:point_elliptic1} admits a unique positive, radial solution, which coincides with the (unique up to sign) action ground state -- see Proposition \ref{pr:uniqueness} for a complete statement. This extends the classical uniqueness result of \cite{Kwong} for positive, regular solutions to the singular setting, at least in two dimensions. In the regular case, uniqueness holds up to sign and in addition up to translations; the latter condition is not required in the singular case because the point interaction breaks the translational invariance.

The above information, combined with Theorems \ref{th:main_equivalence} and \ref{th:main_critical_points}, has two relevant consequences.

The first is a structure theorem for positive solutions to \eqref{eq:singular_elliptic}, whose proof also relies on a rigidity result ensuring that positive solutions to \eqref{eq:singular_elliptic} are necessarily radially symmetric -- see Proposition \ref{pr:radial}.

\begin{theorem}\label{th:structure}
Fix $d=2$, $\sigma=1$, $\lambda>0$, $p>1$, and let $u\in\mathcal C^2(\R^2\setminus\{0\})$ be a positive solution to equation \eqref{eq:singular_elliptic} such that $u(x)\to0$ as $|x|\to\infty$.
\begin{itemize}
	\item[(i)] If $u$ is singular, then there exists $\alpha\in\R$ such that 
	$\mathsf{G}_{\lambda,\alpha}=\{\pm u\}$.
	\item[(ii)] If $u$ is regular, then
	$\mathsf{G}_{\lambda,\infty}=\{\pm u(\cdot-x_0)\,:\,x_0\in\R^2\}$.
\end{itemize}
\end{theorem}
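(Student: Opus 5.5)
The plan is to reduce both parts to the uniqueness of positive radial solutions (Proposition \ref{pr:uniqueness}, from \cite{fukaya-uniqueness}, and Kwong's result \cite{Kwong} in the regular case), combined with the correspondence of Theorem \ref{th:main_equivalence}.

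\textbf{Step 1: check the hypotheses of Theorem \ref{th:main_equivalence}.} Since $u>0$ and $\sigma=1$, the lower bound in \eqref{eq:below-green} holds trivially (take any $c$, e.g.\ $c=1$). The Trace Regime hypothesis holds because $d=2$. Hence Theorem \ref{th:main_equivalence} applies and gives three things:
\begin{itemize}
\item the charge $q=\lim_{x\to0}\G_\lambda^{-1}u\geqslant 0$ exists;
\item the regular part $f=u-q\G_\lambda$ is continuous;
\item $u$ is a critical point of $S_{\lambda,\alpha}$ for some $\alpha\in\overline{\R}$.
\end{itemize}

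\textbf{Step 2: part (i).} If $u$ is singular, then $q\neq0$, so $q>0$, and $\alpha\in\R$ is uniquely fixed by \eqref{eq:q-alpha}. By Proposition \ref{pr:radial}, $u$ is radial. So $u$ is a positive radial critical point of $S_{\lambda,\alpha}$, i.e.\ a positive radial solution of \eqref{eq:point_elliptic1}.

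We need $\lambda>\lambda_\alpha$ to invoke Proposition \ref{pr:uniqueness}. Test the equation $(-\Delta_\alpha+\lambda)u=u^p$ against the positive eigenfunction $\G_{\lambda_\alpha}$ of $-\Delta_\alpha$ with eigenvalue $-\lambda_\alpha$. Self-adjointness is justified because $u\in H^s_\alpha$ with $s>1$ and the pairing makes sense in the form sense. This gives
\[
(\lambda-\lambda_\alpha)\langle u,\G_{\lambda_\alpha}\rangle=\langle u^p,\G_{\lambda_\alpha}\rangle>0 .
\]
Since $\langle u,\G_{\lambda_\alpha}\rangle>0$, it follows that $\lambda>\lambda_\alpha$.

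Proposition \ref{pr:uniqueness} then says that the unique positive radial solution coincides with the positive ground state, and that $\mathsf{G}_{\lambda,\alpha}$ consists of it and its negative. So $\mathsf{G}_{\lambda,\alpha}=\{\pm u\}$.

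\textbf{Step 3: part (ii).} If $u$ is regular, then $q=0$ and $u\in H^2(\R^2)$ is a positive $H^1$ critical point of $S_{\lambda,\infty}$. By Gidas--Ni--Nirenberg, $u$ is radial about some point $x_0$. By Kwong's uniqueness, $u=Q(\cdot-x_0)$, where $Q$ is the unique positive radial solution. It remains to show that the ground states of the free action are exactly $\pm Q(\cdot-y)$, $y\in\R^2$, which we take from standard results.
\begin{itemize}
\item Ground states exist (Berestycki--Lions).
\item They do not change sign: replacing $v$ by $|v|$ preserves the Nehari-level characterization.
\item Their modulus is radial up to translation (Schwarz symmetrization and the strict Pólya--Szegő equality case).
\item By Kwong, every ground state is then $\pm Q(\cdot-y)$.
\end{itemize}
Since the action is translation invariant, every $\pm Q(\cdot-y)$ is a ground state.

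\textbf{Main obstacle.} The delicate step is the $\lambda>\lambda_\alpha$ argument in (i). One has to justify pairing the operator equation with $\G_{\lambda_\alpha}$, which needs the regularity from Theorem \ref{th:main_equivalence}(ii) and the self-adjointness of $-\Delta_\alpha$. One also has to make sure Proposition \ref{pr:uniqueness} covers every positive radial solution, not just positive radially decreasing ones. If the latter fails, I would first show radial monotonicity from the ODE satisfied by $u$ on $(0,\infty)$.
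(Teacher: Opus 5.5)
Your proposal is correct and follows essentially the same route as the paper: radial symmetry from Proposition \ref{pr:radial}, then $u\in H^2_\alpha(\R^2)$ via Theorem \ref{th:main_equivalence} (the bound \eqref{eq:below-green} being automatic for $u>0$), then $\lambda>\lambda_\alpha$ by testing \eqref{eq:point_elliptic1} against $\mathcal{G}_{\lambda_\alpha}$, and finally the uniqueness statement of Proposition \ref{pr:uniqueness}. The only difference is that in part (ii) you re-derive the classification of free ground states from Berestycki--Lions, sign and symmetry arguments, and Kwong, whereas the paper reads it off Proposition \ref{pr:uniqueness}(ii) directly, using $\lambda_\infty=0$.
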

Equivalently, the above result shows that $u$ coincides with the unique (up to sign, and additionally up to translations in the regular case) ground state of $S_{\lambda,\alpha}$ for some $\alpha\in\overline{\R}$, with $\alpha=\infty$ if and only if $u$ is regular.

The second consequence is the existence of \emph{nodal}, singular solutions to the semilinear elliptic equation \eqref{eq:singular_elliptic}.

\begin{theorem}\label{th:nodal}
Fix $d=2$, $\sigma=1$, $\lambda>0$ and $p>1$. There exist infinitely many singular, radial, nodal solutions to \eqref{eq:singular_elliptic} vanishing at infinity.
\end{theorem}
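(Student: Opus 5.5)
Fix $\alpha\in\R$ with $\lambda>\lambda_{\alpha}$. Since $\beta_\alpha(\lambda_\alpha)=0$ and $\beta_\alpha$ is increasing in $\omega$, such an $\alpha$ exists for every $\lambda>0$: in $d=2$ it suffices to take $\alpha$ large enough that $\lambda_\alpha<\lambda$. By Theorem \ref{th:main_critical_points} the action $S_{\lambda,\alpha}$ has infinitely many singular, radial critical points $u_n\in H^1_{\alpha,\mathrm{rad}}(\R^2)$. These are produced by the symmetric mountain pass theorem, so $S_{\lambda,\alpha}(u_n)\to+\infty$ and infinitely many of them are pairwise distinct even up to sign. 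Each $u_n$ has charge $q_n\neq0$, since it is singular.

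By Theorem \ref{th:main_equivalence}, (iii)$\Rightarrow$(i), every $u_n$ is a $\mathcal C^2(\R^2\setminus\{0\})$ solution of \eqref{eq:singular_elliptic} vanishing at infinity. It is singular because $q_n\neq 0$ and $\mathcal G_\lambda\to\infty$ at the origin. It remains to show that all but at most two of the $u_n$ change sign.

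Suppose some $u_n$ does not change sign, say $u_n\geqslant0$ after replacing it by $-u_n$. I will show $u_n>0$ on $\R^2\setminus\{0\}$. By \eqref{eq:qlim}, $u_n\sim q_n\mathcal G_\lambda$ near the origin, so $q_n>0$ and $u_n>0$ near $0$. Away from the origin, $u_n$ is a $\mathcal C^2$ nonnegative solution of $-\Delta u_n+(\lambda-u_n^{p-1})u_n=0$, a linear equation with locally bounded coefficient. The strong maximum principle on the connected set $\R^2\setminus\{0\}$ then gives $u_n>0$ there.

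Hence $u_n$ is a positive radial solution of \eqref{eq:point_elliptic1} for this $\alpha$. By the uniqueness result of \cite{fukaya-uniqueness} (Proposition \ref{pr:uniqueness}), $u_n$ is the unique positive ground state, equivalently $\mathsf G_{\lambda,\alpha}=\{\pm u_n\}$ as in Theorem \ref{th:structure}. Therefore at most two of the $u_n$, namely the ground state and its negative, can be of constant sign. All remaining ones are nodal, and there are infinitely many of them, which proves the statement.

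The main obstacle is that Proposition \ref{pr:uniqueness} concerns solutions of \eqref{eq:point_elliptic1} in the point-interaction sense with this specific $\alpha$. This is exactly what the critical points $u_n$ are, so no re-identification of $\alpha$ through \eqref{eq:q-alpha} is needed. The one point to check carefully is that the maximum principle applies in the punctured plane: $u_n$ is only $\mathcal C^2$ away from $0$, but the sign near $0$ is already fixed by the Green-function asymptotics, so the argument goes through.
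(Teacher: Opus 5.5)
Your proposal is correct and follows the paper's proof: you pick $\alpha$ with $\lambda_\alpha<\lambda$, take the infinitely many radial singular critical points from Theorem \ref{th:main_critical_points}, transfer them to \eqref{eq:singular_elliptic} via Theorem \ref{th:main_equivalence}, and use the uniqueness in Proposition \ref{pr:uniqueness} to rule out a constant sign for all but $\pm u_\alpha$. The only minor difference is that you upgrade ``no sign change'' to strict positivity with the strong maximum principle on $\R^2\setminus\{0\}$, whereas the paper uses Lemma \ref{le:zin} (ODE uniqueness at a zero $r_0\neq 0$) to show that a radial solution vanishing somewhere is nodal; the two arguments do the same job here.
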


As already observed in the introduction, the above result appears to be new. 
We mention that nodal solutions to the Schr\"odinger equation with point interaction \eqref{eq:point_elliptic1} are constructed in the recent preprint \cite{DPR-nodal} via a Lyapunov--Schmidt reduction, but only for a significantly more restricted range of the parameters $p$ and $\lambda$ than the one considered here.

The proof of Theorem \ref{th:nodal} relies on two key ingredients: the existence of infinitely many singular critical points of the action functional $S_{\lambda,\alpha}$, and the uniqueness of positive solutions to \eqref{eq:point_elliptic1}. In the regular setting (standard Laplacian and no point interaction), besides the above strategy based on multiplicity--uniqueness results, various alternative approaches have been developed in the literature, which also often provide refined structural information on the nodal sets. Extending these approaches to the singular case is an interesting direction for future research.

\begin{remark}
	We expect that statements analogous to Theorems \ref{th:structure} and \ref{th:nodal} also hold in dimension $d=3$. However, since uniqueness of positive solutions to \eqref{eq:point_elliptic1} is still unknown in that case, we cannot apply the strategy developed here. In this perspective, we point out that some arguments in \cite{fukaya-uniqueness} rely crucially on working in dimension two, and therefore new ideas are required.
\end{remark}

\subsection{Comparison with classical theory of singular solutions}\label{isolatedsol}
We conclude with an overview of classical results on isolated singularities for \eqref{eq:singular_elliptic} in dimension $d>1$, based on the manuscript \cite{Veron-book} and references therein. This highlights the role of the assumptions required for the operator-theoretic approach provided by point interactions.

A \emph{weak} (or Green-type) isolated singularity is a solution with
\begin{equation*}
u(x)\sim q\G_\lambda(x)\mbox{ as }x\to 0,\mbox{ for some }
        q\neq 0.
\end{equation*}
These solutions exist if and only if $\G_{\lambda}^p\in L^1_{\mathrm{ loc}}(\R^d)$, namely for
\begin{equation}\label{eq:sub}
\begin{cases}
p\in(1,\infty)&\mbox{if }d=2,\\
p\in(1,\frac{d}{d-2})&\mbox{if }d\geqslant 3,
\end{cases}
\end{equation}
which justify the choice of the regime \ref{reg:pd} (as already discussed in the introduction, the restriction $d\leqslant 3$ in this paper is related instead to the existence of non-trivial point interactions).

Moreover, if $u$ is a weak-type singularity, then it satisfies
\begin{equation*}
(-\Delta+\lambda)u=\sigma|u|^{p-1}u+q\delta_0
\end{equation*}
in the sense of distribution. The above Dirac mass correction is compatible with the action of point interaction given by \eqref{eq:dirac_elliptic}, and will be exploited in the proofs of Theorems \ref{th:main_equivalence}-\ref{th:equivalence-weak}.

Besides the branch of weak-type solutions, equation \eqref{eq:singular_elliptic} also admits other kinds of isolated singularities for suitable values of $d$, $\sigma$ and $p$. Let us discuss separately the absorption and source cases.\par\smallskip

\underline{Absorption case: $\sigma=-1$.} In the regime \eqref{eq:sub}, there exist also the so-called strong-type isolated singularities, exhibiting the power behavior
\begin{equation*}
u(x)\sim |x|^{-2/(p-1)}\mbox{ as }x\to 0\ .
\end{equation*} As a consequence, the two-sided Green bound near the
origin, imposed in \eqref{eq:below-green}, is necessary. When $d\geqslant 3$ and $p\geqslant\frac{d}{d-2}$, there are no positive, singular solutions to \eqref{eq:singular_elliptic}. We refer to Section \ref{ssa} for further discussion on the absorption case.\par\smallskip

\underline{Source case: $\sigma=1$.} In the regime \eqref{eq:sub}, the a priori lower bound $u\gtrsim -\G_{\lambda}$ near the origin we imposed in \eqref{eq:below-green} guarantees that the only possible isolated singularities are of weak-type. To the best of our knowledge, it is unknown whether this lower bound is actually necessary; still it emerges naturally in our approach, which exploits a classical result of Bresiz-Lions, see Proposition \ref{lem:Brezis-Lions}. 

In the regime $d\geqslant 3$, $\frac{d}{d-2}\leqslant p<\frac{d+2}{d-2}$, there exists other isolated singularities, whose local behavior at $x=0$ is given by
$$
\begin{cases}
|x|^{-(d-2)}\big|\log{|x|}\big|^{(2-d)/2}&p=\frac{d}{d-2},\\
|x|^{-2/(p-1)}&\frac{d}{d-2}<p<\frac{d+2}{d-2}.
\end{cases}
$$
Finally, when $d\geqslant 3$ and $p>\frac{d+2}{d-2}$, there are neither singular nor regular solutions.

Summarizing, the description by means of point interactions is not a classification of all isolated singularities, but rather an operator-theoretic characterization of the weak-type solutions in dimensions $d=2,3$. As we have shown, this framework sheds new light on the study of singular solutions, and it would therefore be interesting to extend it to encompass the case $d\geqslant 4$, as well as others type of isolated singularities.



\section{Proofs of the main results}\label{sec:Proofs}
\subsection{Equivalence} We prove here our first two main results, Theorem \ref{th:main_equivalence} and Theorem \ref{th:equivalence-weak}, on the connection between equations \eqref{eq:singular_elliptic} and \eqref{eq:point_elliptic1}.  The proofs are based upon the combination of three main tools:
\begin{itemize}
	\item[-] a suitable characterization of the Sobolev spaces $H_{\alpha}^s(\R^d)$ adapted to the point interaction, see Proposition \ref{pr:hasp} and Remark \ref{re:transition};
	\item[-] explicit formulas for the action of $-\Delta_{\alpha}$ at fractional Sobolev regularity, see Lemma \ref{le:csdi};
	\item[-] a refined version of the Brezis-Lions Lemma for linear elliptic equations, see Proposition \ref{lem:Brezis-Lions}.
\end{itemize} 

We start our analysis by providing an explicit characterization of the Sobolev spaces $H_{\alpha}^s(\R^d)$ adapted to $-\Delta_{\alpha}$, defined by \eqref{eq:singular_spaces}, in the relevant regime $s\in(0,2]$. The following proposition is proved in \cite{GeoRas-full}, which also deal with the more general case of $H_{\alpha}^{s,p}$-spaces with $p\neq 2$ (see also \cite{Georgiev-M-Scandone-2016-2017,MOS,GMS-CPDE,GeoRas-pro,GeoRas-2D} for related results).

\begin{proposition}\label{pr:hasp}
Let $d=2,3$ and $\alpha\in\R$. We distinguish three regimes:
\begin{itemize}
	\item[(i)] \underline{No Singular Component}: $d=2$, $s\in(0,1)$ or $d=3$, $s\in(0,\frac12)$. We have \begin{equation*}H_{\alpha}^s(\R^d)\cong H^s(\R^d)\end{equation*}
	 as an equivalence between Banach spaces;
	\item[(ii)] \underline{Singular Component, Free regime}: $d=2$, $s=1$ or $d=3$, $s\in(\frac12,\frac32)$.\\[0.2em] Given $\omega>0$, we have
\begin{gather*}H_{\alpha}^s(\R^d)=\{u\in L^2(\R^d)\,:\,u=f+q\mathcal{G}_{\omega},\, f\in H^s(\R^d),\,q\in\R\},\\
\|f+q\mathcal{G}_{\omega}\|_{H_{\alpha}^s}\approx \|f\|_{H^s} +|q|;
\end{gather*}
	\item[(iii)] \underline{Singular Component, Trace Regime}: $d=2$, $s\in(1,2]$ or $d=3$, $s\in(\frac32,2]$. \\[0.2em] Given $\omega>0$, we have
	\begin{gather*}
	H_{\alpha}^s(\R^d)=\{u\in L^2(\R^d)\,:\,u=f+q\mathcal{G}_{\omega},\, f\in H^s(\R^d)\,,\,\beta_{\alpha}(\omega)q=f(0)\},\\
	\|f+q\mathcal{G}_{\omega}\|_{H_{\alpha}^s}\approx\|f\|_{H^s}.
	\end{gather*}
\end{itemize}
\end{proposition}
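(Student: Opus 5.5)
This proposition is quoted from \cite{GeoRas-full}, but I would prove it directly by computing the resolvent of $-\Delta_\alpha$ explicitly. Fix $\omega>0$ large, so that $\omega>\lambda_\alpha$ and $\beta_\alpha(\omega)\neq0$; since the spaces $H^s_\alpha$ are defined through fractional powers, changing the shift only changes norms by equivalent factors. Krein's formula gives
\[
(-\Delta_\alpha+\omega)^{-1}g=(-\Delta+\omega)^{-1}g+\beta_\alpha(\omega)^{-1}\langle \mathcal G_\omega,g\rangle\,\mathcal G_\omega ,
\]
which is read off directly from \eqref{domain}--\eqref{action}. For $s=2$ this already yields (iii): any $u\in\mathcal D(-\Delta_\alpha)$ decomposes as $f+q\mathcal G_\omega$ with $\|u\|_{H^2_\alpha}\approx\|(-\Delta+\omega)f\|_{L^2}\approx\|f\|_{H^2}$, and $q=f(0)/\beta_\alpha(\omega)$ is controlled by $\|f\|_{H^2}$.

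For $s=1$ I would use the form domain \eqref{eq:form_domain}. The form $\mathcal Q_\alpha+\omega$ is equivalent to $\|\nabla f\|^2+\omega\|f\|^2+\beta_\alpha(\omega)q^2$, which is comparable to $\|f\|_{H^1}^2+q^2$ for $\omega$ large. This gives (ii) when $d=2$, $s=1$. For general $s\in(0,2)$ I would interpolate, using $[H^{s_0}_\alpha,H^{s_1}_\alpha]_\theta=H^{(1-\theta)s_0+\theta s_1}_\alpha$, which holds because these are domains of powers of a positive self-adjoint operator. The whole question then reduces to identifying complex interpolation spaces between $L^2$ and $\{f+q\mathcal G_\omega:\ f\in H^2,\ \beta q=f(0)\}$.

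The cleanest way to do this is through the bounded map $J:H^s\to H^s_\alpha$, $J=(-\Delta_\alpha+\omega)^{-s/2}(-\Delta+\omega)^{s/2}$, together with its inverse. Using the Krein formula and the Balakrishnan integral for fractional powers,
\[
(-\Delta_\alpha+\omega)^{-s/2}-(-\Delta+\omega)^{-s/2}=c_s\int_0^\infty t^{-s/2}\,\beta_\alpha(\omega+t)^{-1}\langle\mathcal G_{\omega+t},\cdot\rangle\,\mathcal G_{\omega+t}\,dt ,
\]
the difference of the two operators is an explicit rank-one integral. Every estimate then reduces to norms of $\mathcal G_{\omega+t}$ in $H^{r}$. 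From the Fourier side, $\|\mathcal G_{\mu}\|_{H^r}^2=\int(1+|\xi|^2)^r(|\xi|^2+\mu)^{-2}d\xi$, which is finite if and only if $r<2-d/2$, with explicit powers of $\mu$ otherwise.

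The three regimes follow from where the threshold sits.

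\emph{Case (i)}, $s<2-d/2$ in $d=3$ and $s<1$ in $d=2$. Here $\mathcal G_\omega\in H^s$, and the integral converges in $\mathcal B(H^s)$, so $H^s_\alpha\cong H^s$. In $d=2$ at $r\to1$ there is only logarithmic divergence; I would track it through $\beta_\alpha(\omega+t)\sim\frac1{4\pi}\ln t$.

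\emph{Case (ii).} $\mathcal G_\omega\notin H^s$, but the pairing $\langle\mathcal G_\omega,g\rangle$, i.e. a "trace" $g(0)$, is not defined on $H^{s}$ for $s\le d/2$. So no boundary relation survives, and the charge is a free parameter, obtained by interpolating between the $q$-component and $f\in H^s$.

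\emph{Case (iii).} For $s>d/2$ the trace $f(0)$ is continuous on $H^s$, and the relation $\beta q=f(0)$ persists under interpolation. It is the closed subspace cut out by the bounded functional $f\mapsto f(0)-\beta_\alpha(\omega)q$ on $H^s\oplus\R$. Interpolation of such constrained subspaces, by the Seeley/Grisvard-type result that traces are preserved for $s$ above the trace threshold, gives the stated description and the norm $\|f\|_{H^s}$.

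I expect the main obstacle to be precisely this interpolation of constrained spaces near the critical indices $s=d/2$ and $s=2-d/2$. Those endpoints are excluded in the statement, e.g. $d=3$, $s=\frac12,\frac32$, but the estimates degenerate as they are approached. In $d=2$ the two thresholds coincide at $s=1$, and the logarithmic behaviour of $\beta_\alpha$ must be handled carefully. I would first try to avoid abstract interpolation altogether and prove both inclusions directly from the Balakrishnan integral representation, splitting $t\lessgtr1$ and estimating each piece using the explicit Fourier norms of $\mathcal G_{\omega+t}$.
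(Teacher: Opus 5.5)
\textbf{Verdict.} There is a genuine gap: the proposal does not establish the proposition for non-integer $s$, which is where all of its content lies.

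The paper does not prove this proposition; it quotes it from \cite{GeoRas-full}. So your proposal has to stand on its own. The parts you actually establish are correct:
\begin{itemize}
\item Krein's formula;
\item the cases $s=2$ and $s=1$;
\item the harmless change of spectral shift;
\item the Balakrishnan representation of $(-\Delta_\alpha+\omega)^{-s/2}-(-\Delta+\omega)^{-s/2}$.
\end{itemize}

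\textbf{The operator-norm estimate fails.} For non-integer $s$, the one concrete estimate you commit to diverges for every $s$. You bound the rank-one integral by $\int_0^\infty t^{-s/2}|\beta_\alpha(\omega+t)|^{-1}\|\mathcal{G}_{\omega+t}\|_{L^2}\|\mathcal{G}_{\omega+t}\|_{H^s}\,dt$.
\begin{itemize}
\item In $d=3$ one has $\|\mathcal{G}_\mu\|_{L^2}\approx\mu^{-1/4}$, $\|\mathcal{G}_\mu\|_{H^s}\approx\mu^{s/2-1/4}$ for $s<\frac12$, and $\beta_\alpha(\mu)\approx\mu^{1/2}$. The integrand is therefore $\approx t^{-1}$.
\item In $d=2$ the integrand is $\approx(t\ln t)^{-1}$, so the logarithm in $\beta_\alpha$ does not rescue you.
\item The remainder $\int(\cdots)(\mathcal{G}_{\omega+t}-\mathcal{G}_\omega)\,dt$ needed in case (ii) also gives $\approx t^{-1}$.
\end{itemize}
Since the Fourier kernels are positive, each dyadic block in $t$ contributes $O(1)$ to the operator norm. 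Hence the integral does not converge in operator norm, and your fallback of splitting in $t$ and using Fourier norms of $\mathcal{G}_{\omega+t}$ cannot close. Boundedness comes only from almost-orthogonality across $t$.

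One way to capture this is a Schur test on the positive Fourier kernel of $(-\Delta+\omega)^{s/2}\big[(-\Delta_\alpha+\omega)^{-s/2}-(-\Delta+\omega)^{-s/2}\big]$:
\[
K(\xi,\eta)=c\,\langle\xi\rangle^{s}\int_0^\infty\frac{t^{-s/2}\,dt}{\beta_\alpha(\omega+t)\,(\langle\xi\rangle^2+t)(\langle\eta\rangle^2+t)},\qquad\langle\xi\rangle^2:=|\xi|^2+\omega .
\]
One finds $K\lesssim\langle\xi\rangle^{s}\min(\langle\xi\rangle,\langle\eta\rangle)^{2-d-s}\max(\langle\xi\rangle,\langle\eta\rangle)^{-2}$, which is homogeneous of the critical degree $-d$. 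The weight $\langle\xi\rangle^{-\gamma}$ passes the Schur test exactly for $d-2+s<\gamma<2-s$, which requires $s<2-\frac d2$. This is where the threshold actually comes from. You also never prove the reverse inclusion $H^s\subseteq H^s_\alpha$. It follows by interpolating $H^1\hookrightarrow H^1_\alpha$.

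\textbf{The interpolation route is not set up correctly.} As written, a Seeley/Grisvard-type theorem does not apply. Between $L^2$ and $H^2_\alpha$ there is no common scale of pairs $(f,q)$. At the $L^2$ endpoint $\mathcal{G}_\omega\in L^2$, and because $H^1$ is dense in $L^2$, no bounded functional on $L^2$ can vanish on $H^1$ and extract $q$. So ``interpolating between the $q$-component and $f$'' has no meaning, and your case (ii) paragraph remains a heuristic.

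A workable version reiterates through your $s=1$ result.
\begin{itemize}
\item For $s\in(1,2)$: both $H^1_\alpha\cong H^1\oplus\R$ and $H^2_\alpha\cong\ker\Lambda$, with $\Lambda(f,q)=f(0)-\beta_\alpha(\omega)q$ on $H^2\oplus\R$, live in one product scale. An interpolation theorem for the kernel of a single functional, valid away from the index where $\Lambda$ stops being bounded, gives (iii) for $s>\frac d2$ and (ii) for $s<\frac d2$.
\item For $s\in(0,1)$: dualize. Then $H^{-1}_\alpha\cong H^{-1}\oplus\R$, and $L^2$ embeds as the graph $\{(h,\langle h,\mathcal{G}_\omega\rangle)\}$. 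This constraint extends to $H^{-s}\oplus\R$ if and only if $\mathcal{G}_\omega\in H^s$, i.e.\ $s<2-\frac d2$. Hence $H^{-s}_\alpha\cong H^{-s}$ or $H^{-s}\oplus\R$ respectively, and dualizing gives (i) and the $s<1$ part of (ii).
\end{itemize}
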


\begin{remark}\label{re:transition}
The characterization of $H_{\alpha}^{s}(\R^3)$ in the threshold cases $s=\frac12,\frac32$ is more involved \cite{Georgiev-M-Scandone-2016-2017}, and it is not needed for our purposes. We present here only some useful partial information: given $\omega>0$,\par\smallskip\noindent
(i) both $H^{1/2}(\R^3)$ and $\operatorname{span}{\{\G_{\omega}\}}$ are closed subspaces of $H_{\alpha}^{1/2}(\R^3)$; \par\smallskip\noindent
(ii) there results that 
\begin{equation}\label{tremezzi}
H^{ 3/2}_{\alpha}(\R^3)=\{u\in L^2(\R^3)\,:\,u=f+q\mathcal{G}_{\omega},\, f\in\widetilde{H}^{3/2}(\R^3)\,,\,\beta_{\alpha}(\omega)q=f(0)\},
\end{equation}
where the space $\widetilde{H}^{3/2}(\R^3)$ satisfies the following chain of inclusions
\begin{equation*}
H^{3/2+}(\R^3)\subseteq\widetilde{H}^{3/2}(\R^3)\subseteq H^{3/2}(\R^3)\cap\mathcal{C}(\R^3).
\end{equation*}
\end{remark}

Next, we study the action of $-\Delta_{\alpha}$ in the regime covered by point (ii)-(iii) of Proposition \ref{pr:hasp}, namely when the adapted Sobolev space $H_{\alpha}^s(\R^d)$ decouples regular and singular (i.e.~proportional to the Green kernel) components.

\begin{lemma}\label{le:csdi}
Let $d=2,3$, $\alpha\in\R$, and $s\in[1,2]$ when $d=2$, $s\in(\frac12,2]$ when $d=3$. Fix $\omega>0$. Then both $H^{2-s}(\R^d)$ and $\operatorname{span}\{\mathcal{G}_{\omega}\}$ are closed subspaces of $H_{\alpha}^{2-s}(\R^d)$. If $u=f+q\mathcal{G}_{\omega}\in H^s_{\alpha}(\R^d)$ then
\begin{itemize}
	\item[(i)] $-\Delta_{\alpha}u$ can be identified with a distribution in $H^{s-2}(\R^d)$, satisfying
	\begin{equation}\label{eq:distrib_id}
		(-\Delta_{\alpha}+\omega)u=(-\Delta+\omega)u-q\delta=(-\Delta+\omega)f.
	\end{equation}
\item[(ii)] $-\Delta_{\alpha}u$ acts on the Green function $\mathcal{G_{\omega}}$ through the identity
\begin{equation}\label{eq:test_green}
\big\langle(-\Delta_{\alpha}+\omega)u,\mathcal{G}_{\omega}\big\rangle_{H_{\alpha}^{s-2}\times H_{\alpha}^{2-s}}=\beta_{\alpha}(\omega)q.
\end{equation}
\end{itemize}
\end{lemma}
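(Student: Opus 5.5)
My plan is to extend $-\Delta_\alpha$ by duality and then check the three assertions directly against Proposition \ref{pr:hasp}. Since $-\Delta_\alpha+\omega$ is self-adjoint and positive (for $\omega>\lambda_\alpha$), it extends to an isomorphism $H^s_\alpha\to H^{s-2}_\alpha$, with
\[
\langle(-\Delta_\alpha+\omega)u,v\rangle=\langle u,(-\Delta_\alpha+\omega)v\rangle
\]
for $v\in H^{4-s}_\alpha$. By the independence of $\omega$ discussed after \eqref{domain}, I can assume $\omega$ is large, which is harmless.

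\textbf{Closedness of the two subspaces.} The range $s\in[1,2]$ ($d=2$) or $s\in(\frac12,2]$ ($d=3$) means $2-s\in[0,1]$ or $[0,\frac32)$. In the ranges covered by Proposition \ref{pr:hasp}(i)--(ii), the decomposition $u=f+q\G_\omega$ holds with norm equivalent to $\|f\|_{H^{2-s}}+|q|$ (or just $\|f\|$ when there is no singular component). The borderline exponents $2-s=0$ ($L^2$), $2-s=1$ for $d=2$, and $2-s=\frac12$ for $d=3$ are covered by (ii) and Remark \ref{re:transition}(i). In the no-singular-component case, $\G_\omega\in H^{2-s}$ because $\G_\omega\in H^{1-}$ for $d=2$ and $H^{1/2-}$ for $d=3$, and one-dimensional spans are closed. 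So both $H^{2-s}$ and $\operatorname{span}\{\G_\omega\}$ embed continuously as closed subspaces.

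\textbf{Part (i).} Restricting the functional $(-\Delta_\alpha+\omega)u\in H^{s-2}_\alpha$ to the closed subspace $H^{2-s}$ gives an element of $H^{s-2}$. To identify it, I test against $\varphi\in\mathcal C_0^\infty(\R^d\setminus\{0\})$, which is dense in the relevant $H^{2-s}$ since $2-s<d/2$. For such $\varphi$, $-\Delta_\alpha\varphi=-\Delta\varphi$, so
\[
\langle(-\Delta_\alpha+\omega)u,\varphi\rangle=\langle f+q\G_\omega,(-\Delta+\omega)\varphi\rangle=\langle(-\Delta+\omega)f,\varphi\rangle,
\]
using that $(-\Delta+\omega)\G_\omega=\delta$ and $\varphi(0)=0$. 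Testing instead against a general $\varphi\in\mathcal C_0^\infty(\R^d)$ gives $(-\Delta+\omega)u-q\delta$ in distributions, which matches \eqref{eq:dirac_elliptic}. The point I must justify carefully is that the restriction is determined by its values on test functions vanishing near $0$. This requires density of $\mathcal C_0^\infty(\R^d\setminus\{0\})$ in $H^{2-s}(\R^d)$, which holds for $2-s\le d/2$ (and for $d=2$, $2-s=1$ it still holds since $H^1(\R^2)$ has capacity-zero points).

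\textbf{Part (ii).} I first take $u\in H^2_\alpha$, so that $f\in H^2$ with $f(0)=\beta_\alpha(\omega)q$. Then
\[
\langle(-\Delta+\omega)f,\G_\omega\rangle=f(0)=\beta_\alpha(\omega)q.
\]
I then extend to all $u\in H^s_\alpha$ by density of $H^2_\alpha$ in $H^s_\alpha$. This step is the main obstacle: the map $u\mapsto q$ must be continuous on $H^s_\alpha$. That holds where Proposition \ref{pr:hasp}(ii) gives the norm $\|f\|_{H^s}+|q|$; in the trace regime (iii), $q=f(0)/\beta_\alpha(\omega)$ is continuous by Sobolev trace, assuming $\beta_\alpha(\omega)\ne0$, which is ensured by choosing $\omega$ large. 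For general $\omega$, I transfer the identity using $\G_{\omega}-\G_{\omega'}\in H^2$, the relation $(\G_{\omega_1}-\G_{\omega_2})(0)=\beta_\alpha(\omega_2)-\beta_\alpha(\omega_1)$, and part (i) applied to the $H^2$ difference. At $s=2$ no extension is needed, and at the threshold $s=\frac32$ ($d=3$) I would use \eqref{tremezzi}, where $f$ is continuous.
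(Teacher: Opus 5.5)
Your closedness argument and part (i) are the same as the paper's: restrict $(-\Delta_\alpha+\omega)u$ to $H^{2-s}(\R^d)$, test against $\mathcal C_0^\infty(\R^d\setminus\{0\})$, and conclude by density, which is where $2-s\leqslant d/2$ enters. For part (ii) your route is different, and it is correct.

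The paper keeps $u$ fixed and approximates the test vector. It chooses $\varphi_n\in\mathcal C_0^\infty(\R^d)$ with $\varphi_n(0)=\beta_\alpha(\omega)$ and $\varphi_n\to0$ in $H^{2-s}(\R^d)$; this is possible precisely because point evaluation is discontinuous on $H^{2-s}$. Then $\varphi_n+\G_\omega\in H^2_\alpha(\R^d)$ converges to $\G_\omega$ in $H^{2-s}_\alpha(\R^d)$. Moving the operator onto it by self-adjointness, the paper reads off $q\varphi_n(0)=\beta_\alpha(\omega)q$.

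You instead approximate $u$ by elements of the operator domain. There the identity is just $\int(-\Delta+\omega)f\,\G_\omega\,dx=f(0)=\beta_\alpha(\omega)q$, and you pass to the limit using that the charge $u\mapsto q$ is a bounded functional on $H^s_\alpha(\R^d)$. Your version makes the role of the boundary condition explicit. The paper's is uniform in $s$ and $\omega$ and never needs the charge to be controlled by the $H^s_\alpha$-norm.

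Your case analysis for the continuity of $q$ can be collapsed, and doing so fixes its one soft spot. At $d=3$, $s=\frac32$, the mere continuity of $f\in\widetilde{H}^{3/2}(\R^3)$ from \eqref{tremezzi} does not give $|q|\lesssim\|u\|_{H^{3/2}_\alpha}$, since no norm on $\widetilde{H}^{3/2}$ is available. It suffices to use the continuous embedding $H^s_\alpha(\R^d)\hookrightarrow H^{s'}_\alpha(\R^d)$ for some $s'\leqslant s$ in the free range: take $s'=1$ if $d=2$, and $s'\in(\frac12,\min\{s,\frac32\})$ if $d=3$. There Proposition \ref{pr:hasp}(ii) gives $|q|\lesssim\|u\|_{H^{s'}_\alpha}$, with the same $q$ by uniqueness of the decomposition. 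Moreover, $q$ does not depend on $\omega$, and the computation on $H^2_\alpha(\R^d)$ holds for every $\omega>0$. So you need neither to take $\omega$ large nor to transfer the identity between different values of $\omega$.
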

\begin{proof}
The fact that $H^{2-s}(\R^d)$ and $\operatorname{span}\{\mathcal{G}_{\omega}\}$ are closed subspaces of $H_{\alpha}^{2-s}(\R^d)$ directly follows from the characterization of the adapted Sobolev spaces provided in Proposition \ref{pr:hasp} and Remark \ref{re:transition}. Moreover, $-\Delta_{\alpha}u$ defines a continuous functional on $H_{\alpha}^{2-s}(\R^d)$, whence also on $H^{2-s}(\R^d)$ and $\operatorname{span}\{\mathcal{G}_{\omega}\}$. As a consequence, we have:\par\smallskip
\textbf{(i)} $-\Delta_{\alpha}u$ can be identified with a distribution in $H^{s-2}(\R^d)$. Moreover, if we fix $\chi\in\mathcal{C}^{\infty}_0(\R^d\setminus\{0\})\subseteq\mathcal{D}(-\Delta_{\alpha})$, by \eqref{action}, we get
\begin{equation}\label{id:dense}
	\begin{split}
\langle (-\Delta_{\alpha}+\omega)u,\chi\rangle&=\langle f+q\mathcal{G}_{\omega}, (-\Delta_{\alpha}+\omega)\chi\rangle=\langle f+q\mathcal{G}_{\omega}, (-\Delta+\omega)\chi\rangle\\
&=\langle (-\Delta+\omega)(f+q\mathcal{G}_{\omega}), \chi\rangle
=\langle (-\Delta+\omega)f+q\delta, \chi\rangle\\
&=\langle (-\Delta+\omega)f, \chi\rangle.
	\end{split}
\end{equation}
Since $\mathcal{C}^{\infty}_0(\R^d\setminus\{0\})$ is dense in $H^{2-s}(\R^d)$, we deduce from \eqref{id:dense} that 
\begin{equation}\label{eq:dpd}
(-\Delta_{\alpha}+\omega)u=(-\Delta+\omega)f
\end{equation}
holds as an identity in $H^{s-2}(\R^d)$. Moreover
\begin{equation}
(-\Delta+\omega)u=(-\Delta+\omega)(f+q\mathcal{G}_{\omega})=(-\Delta_{\alpha}+\omega)u+q\delta,
\end{equation}
which combined with \eqref{eq:dpd} proves identity \eqref{eq:distrib_id}.\par\smallskip
\textbf{(ii)} Let $\{\varphi_n\}_{n\in\N}$ be a sequence of smooth, compactly supported functions on $\R^d$, with $\varphi_n(0)=\beta_{\alpha}(\omega)$ $\forall n\in\N$, such that $\varphi_n\to 0$ in $H^{2-s}(\R^d)$. Such a sequence exists because point evaluation is not continuous on $H^{2-s}(\R^d)$ in the range $2-s\leqslant\frac{d}{2}$. We set also
\begin{equation*}
\mathcal{G}_{\omega}^{(n)}:=\varphi_n+\mathcal{G}_{\omega}.
\end{equation*}
Owing to \eqref{domain}, $\mathcal{G}_{\omega}^{(n)}\in H^2_{\alpha}(\R^d)$ $\forall\,n\in\N$, and by virtue of point (ii) of Proposition \ref{pr:hasp} we have $\mathcal{G}_{\omega}^{(n)}\to\mathcal{G}_{\omega}$ in $H_{\alpha}^{2-s}(\R^d).$ Then we obtain
\begin{equation*}
\begin{split}
\big\langle(-\Delta_{\alpha}+\omega)u,\mathcal{G}_{\omega}\big\rangle_{H_{\alpha}^{s-2}\times H_{\alpha}^{2-s}}&=\lim_{n\to\infty}\big\langle(-\Delta_{\alpha}+\omega)u,\mathcal{G}^{(n)}_{\omega}\big\rangle_{H_{\alpha}^{s-2}\times H_{\alpha}^{2-s}}\\
&=\lim_{n\to\infty}\big\langle f+q\mathcal{G}_{\omega},(-\Delta_{\alpha}+\omega)(\varphi_n+\mathcal{G}_{\omega})\big\rangle_{L^2\times L^2}\\
&=\lim_{n\to\infty}\big\langle f+q\mathcal{G}_{\omega},(-\Delta+\omega)\varphi_n\big\rangle_{L^2\times L^2}\\
&=\lim_{n\to\infty}\big\langle (-\Delta+\omega)f,\varphi_n\big\rangle_{H^{s-2}\times H^{2-s}}\\
&+\lim_{n\to\infty} q\big\langle \mathcal{G}_{\omega},(-\Delta+\omega)\varphi_n\big\rangle_{L^2\times L^2}\\
&=\lim_{n\to\infty}q\varphi_n(0)=q\beta_{\alpha}(\omega),
\end{split}
\end{equation*}
which proves identity \eqref{eq:test_green} and concludes the proof.
\end{proof}

Finally, we recall the Brezis-Lions Lemma \cite{Brezis-Lions-Lemma}, which provides suitable structural properties of super-solutions to linear, inhomogeneous elliptic equations.

\begin{proposition}[Brezis-Lions Lemma]
\label{lem:Brezis-Lions}
Let $d\geqslant 2$, $\lambda>0$ and $u\in L^{1}_{\mathrm{loc}}(\R^{d}\setminus\{0\})$ satisfy the following properties:
\begin{itemize}
\item[(i)] $\Delta u \in L^{1}_{\mathrm{loc}}(\R^{d}\setminus\{0\})$, in the sense of distributions on $\R^d\setminus\{0\}$;
  \item[(ii)]  $(-\Delta+\lambda) u(x)\geqslant g(x)$ a.e.~in $\R^{d}$ for some $g\in L^{1}_{\mathrm{loc}}(\R^{d})$; 
 \item[(iii)] there exists $R>0$ and $c\geqslant 0$ such that 
 \begin{equation*}
 u\geqslant -c\mathcal{G}_{\lambda}\quad \text{a.e.~in }B_R.  
 \end{equation*}
\end{itemize}
 Then $u\in L^{1}_{\mathrm{loc}}(\R^{d})$, and there exist $q\in\R,\varphi\in L^1_{\mathrm{loc}}(\R^d)$ such that 
 \begin{equation}
 \label{eq:distr-form-delta}
-\Delta u=\varphi+q\delta\quad \text{in}\quad \mathcal{D}'(\R^{d}).
 \end{equation}
\end{proposition}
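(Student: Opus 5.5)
The idea is to reduce to the classical Brezis--Lions argument for the Laplacian by changing unknown with the Green kernel. The hypotheses are (i)--(iii) on the punctured space, so I will write $u = v - c\mathcal{G}_\lambda$ near the origin, with $v\geqslant 0$.

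\emph{Step 1: reduction to a nonnegative supersolution.} I set $w:=u+c\mathcal{G}_\lambda$. Since $(-\Delta+\lambda)\mathcal{G}_\lambda=0$ on $\R^d\setminus\{0\}$, the function $w$ satisfies $(-\Delta+\lambda)w\geqslant g$ a.e.\ off the origin, and $w\geqslant 0$ a.e.\ in $B_R$. Fixing a cutoff $\eta\in\mathcal{C}_0^\infty(B_R)$ with $\eta\equiv1$ on $B_{R/2}$, I localize to $\tilde w:=\eta w\geqslant 0$. Then $-\Delta\tilde w = \eta(-\Delta w) - 2\nabla\eta\cdot\nabla w - w\Delta\eta$. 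The commutator terms are supported in the annulus $B_R\setminus B_{R/2}$, where $w$ and $\Delta w$ are $L^1$ by (i). To control $\nabla w$ there in $L^1$, I would rather use the standard Kato/Brezis trick of testing against $\eta$-type test functions directly. In the end, $-\Delta\tilde w\geqslant h$ on $B_R\setminus\{0\}$ with $h\in L^1(B_R)$ (using $\lambda w\geqslant 0$ inside $B_{R/2}$ and $g\in L^1_{\rm loc}$).

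\emph{Step 2: local integrability.} This is the main point. I will test the inequality against $\psi_\varepsilon:=\phi\,(1-\theta(|x|/\varepsilon))$, where $\phi\geqslant0$ is a fixed bump and $\theta$ is a cutoff equal to $1$ near $0$. I choose $\phi$ to be the solution of $-\Delta\phi=1$ in $B_{R/2}$ with $\phi=0$ on $\partial B_{R/2}$, or simply $\phi=(R^2/4-|x|^2)_+$ smoothed, so that $-\Delta\phi\geqslant c_0>0$ near $0$. Integration by parts gives
\[
c_0\int_{B_{\rho}\setminus B_{2\varepsilon}}\tilde w\lesssim \int |h|\phi + \Big|\int \tilde w\,\big(2\nabla\phi\cdot\nabla\theta_\varepsilon+\phi\Delta\theta_\varepsilon\big)\Big|.
\]
The last term is bounded by $\varepsilon^{-2}\int_{B_{2\varepsilon}\setminus B_\varepsilon}\tilde w$. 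To close, I use the Brezis--Lions device of working with spherical means. Set $\bar w(r):=\fint_{\partial B_r}\tilde w$. Averaging the distributional inequality gives $-(r^{d-1}\bar w')'\geqslant r^{d-1}\bar h$ with $\bar h\in L^1(r^{d-1}dr)$. Integrating once, $r^{d-1}\bar w'(r)$ is bounded above by a constant plus an integrable term. Hence $\bar w(r)\lesssim 1+\mathcal{G}$-type growth, i.e.\ $\bar w(r)\lesssim |\ln r|$ for $d=2$ and $\bar w(r)\lesssim r^{2-d}$ for $d\geqslant3$, and in either case $\bar w\in L^1(r^{d-1}dr)$ near $0$. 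Since $\tilde w\geqslant0$, this yields $\tilde w\in L^1(B_R)$, hence $u\in L^1_{\rm loc}(\R^d)$ because $\mathcal{G}_\lambda\in L^1_{\rm loc}$. The obstacle in this step is justifying the ODE for spherical means when $\Delta u$ is only $L^1_{\rm loc}$ off the origin; I would mollify in the angular variable or work weakly in $r$.

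\emph{Step 3: structure of the distribution.} Now $u\in L^1_{\rm loc}(\R^d)$, so $T:=-\Delta u-\varphi_0$ is a distribution on $\R^d$, where $\varphi_0:=-\Delta u\in L^1_{\rm loc}(\R^d\setminus\{0\})$ is the pointwise function. The first task is to show $\varphi_0\in L^1_{\rm loc}(\R^d)$. Its negative part is controlled by $|g|+\lambda|u|$. For its positive part, I test $-\Delta u\geqslant g-\lambda u$ against nonnegative bumps $\psi_\varepsilon$ as in Step 2. This bounds $\int \psi_\varepsilon\,(-\Delta u)$ by $\int u\,|\Delta\psi_\varepsilon|$, which stays bounded uniformly as $\varepsilon\to0$ using $u\in L^1_{\rm loc}$ and the $\varepsilon^{-2}$ annulus estimate controlled by the bound on $\bar w$. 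Monotone convergence then gives $\varphi_0^+\in L^1_{\rm loc}$. With this, $T$ is supported in $\{0\}$, so $T=\sum_{|\beta|\leqslant m}c_\beta\partial^\beta\delta$.

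To exclude derivatives, I evaluate $T$ on $\chi(x/\varepsilon)x^\beta$ and use $u\in L^1$ together with the spherical-mean bound. The pairing $\langle u,\Delta(\chi(x/\varepsilon)x^\beta)\rangle=O(\varepsilon^{|\beta|-2}\int_{B_{2\varepsilon}}|u|)$ tends to $0$ for $|\beta|\geqslant1$. This uses $\int_{B_{2\varepsilon}}|u|=o(\varepsilon^{2-|\beta|})$, which needs a little extra care for $|\beta|=1$. I handle that case by using the sign $w\geqslant0$ so that only the nonnegative quantity $\int_{B_{2\varepsilon}} w$ enters, and $\int_{B_{2\varepsilon}} w = o(\varepsilon)$ follows from the bounds on $\bar w$. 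Thus $T=q\delta$, and setting $\varphi:=\varphi_0$ gives \eqref{eq:distr-form-delta}.
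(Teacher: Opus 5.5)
Your Steps 1--2 contain a genuine gap, and it sits exactly where the lemma is nontrivial.

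From $(-\Delta+\lambda)w\geqslant g$ you only get $-\Delta w\geqslant g-\lambda w$. Since $\lambda>0$, the sign $w\geqslant 0$ works against you: it yields $g-\lambda w\leqslant g$, which is not a lower bound. Your claimed inequality $-\Delta\tilde w\geqslant h$ with $h\in L^1(B_R)$ therefore forces $h=g-\lambda w$ on $B_{R/2}$. But $h\in L^1(B_{R/2})$ is equivalent to $w\in L^1$ near the origin, which is the very conclusion Step 2 is supposed to establish. The averaged inequality actually available is $-(r^{d-1}\bar w')'\geqslant r^{d-1}(\bar g-\lambda\bar w)$. Its right-hand side is not known to be integrable near $r=0$, so the bound $\bar w\lesssim\mathcal{G}_\lambda+1$ does not follow.

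Two side remarks. Integrating such an inequality bounds $r^{d-1}\bar w'$ from \emph{below}, which is the direction you need. And the spherical-mean ODE you single out as the main obstacle is routine, by testing with radial functions.

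The missing idea is a way to handle the zeroth-order term with the unfavourable sign. The paper keeps that term and absorbs it. With $\psi^{\pm}(r)=\int_r^L s^{d-1}\overline{u^{\pm}}(s)\,ds$, integrating the radial inequality twice gives $r^{d-1}\bar u(r)\leqslant\lambda L\psi^+(r)+C$. Integrating once more gives $\psi^+(r)\leqslant\lambda L^2\psi^+(r)+\psi^-(r)+C$. One then chooses $L$ with $\lambda L^2<1$, with $\psi^-$ bounded by (iii).

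Within your $w\geqslant 0$ framework a Wronskian would also do. Let $\phi_1\geqslant 1$ be the increasing regular radial solution of $(-\Delta+\lambda)\phi_1=0$. Then $W:=r^{d-1}(\phi_1\bar w'-\phi_1'\bar w)$ satisfies $W'\leqslant -r^{d-1}\phi_1\bar g$, so $W\geqslant -C$ near $0$. Since $\bar w\geqslant 0$ and $\phi_1'\geqslant 0$, this gives $r^{d-1}\bar w'\geqslant -C$, whence $\bar w\lesssim\mathcal{G}_\lambda+1$.

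A second, smaller gap is in Step 3 when $d=2$. With the scaling cutoff $\theta(|x|/\varepsilon)$, estimating $\int\psi_\varepsilon\varphi_0=-\int u\,\Delta\psi_\varepsilon$ in absolute value leaves the term $\varepsilon^{-2}\int_{B_{2\varepsilon}\setminus B_\varepsilon}|u|$. Under $\bar w\lesssim\mathcal{G}_\lambda+1$ this is only $O(|\ln\varepsilon|)$ for $d=2$, and it is really of that size when $q\neq 0$. So it does not stay bounded, and monotone convergence gives nothing. The true boundedness comes from a cancellation, so you must use signs rather than absolute values. Two ways to do this:
\begin{itemize}
\item The paper tests with $\zeta_\varepsilon$ satisfying $\Delta\zeta_\varepsilon\geqslant 0$ and exploits $u\geqslant -c\mathcal{G}_\lambda$.
\item Alternatively, once $u\in L^1_{\mathrm{loc}}(\R^d)$, the flux $r^{d-1}\bar u'(r)$ is monotone up to an integrable correction. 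Its limit as $r\to 0$ cannot be $+\infty$ because $\bar u\geqslant -c\mathcal{G}_\lambda$, and this bounds $\int_{B_L\setminus B_r}\varphi_0$ from above.
\end{itemize}

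Your final step, testing $T$ on $\chi(x/\varepsilon)x^\beta$, is correct and amounts to the paper's rescaled test function $\eta_\varepsilon$. The case $|\beta|=1$ needs no special care, since $\int_{B_{2\varepsilon}}|u|=O(\varepsilon^2(1+|\ln\varepsilon|))$ once the spherical-mean bound is available.
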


In the seminal paper \cite{Brezis-Lions-Lemma}, the authors require $u\geqslant 0$, though they suggest that the non-negativity assumption can be actually replaced by condition (iii) above, which is crucial for our application to nodal solutions of \eqref{eq:point_elliptic1} in the source case. We provide an explicit proof of Proposition \ref{lem:Brezis-Lions} in Appendix \ref{app}.

\begin{remark} 
If condition (iii) is dropped, $u$ may still be in $L^1_{\mathrm{loc}}(\R^d)$, but \eqref{eq:distr-form-delta} would generally involve an additional distributional correction of order one. More precisely, allowing $u$ to have a singularity of order $-\mathcal{G}_{\lambda}'\in L^1_{\mathrm{loc}}(\R^d)$, one would get a $\delta'$-term in \eqref{eq:distr-form-delta}.
\end{remark}

We are now able to prove an intermediate statement on the equivalence between equations \eqref{eq:singular_elliptic} and \eqref{eq:point_elliptic1}, that covers both the Trace and the Free Regime.

\begin{proposition}
\label{prop:equivalence}
Let $\lambda>0$, $\sigma=\pm 1$, and either $d=2$, $p\in(1,\infty)$ or $d=3$, $p\in(1,3)$. The following conditions are equivalent:
\begin{itemize}
\item[(i)] $u\in\mathcal{C}^2(\R^d\setminus\{0\})$, with $|u(x)|\to 0$ as $|x|\to\infty$, satisfies \eqref{eq:below-green} and solves equation \eqref{eq:singular_elliptic};
\item[(ii)] there exists $\alpha\in\overline{\R}$, and $s\in[1,2]$ when $d=2$ or $s\in(\frac12,2]$ when $d=3$, such that $u\in H_{\alpha}^s(\R^d)$ and satisfies \eqref{eq:point_elliptic1} as an identity in $H^{s-2}(\R^d)$.
\end{itemize}
If the above conditions are satisfied, then $\exists\,q\in\R$ determined by \eqref{eq:qlim},
and $u$ extends to a  $\mathcal{C}^2$ function on $\R^d$ if and only if $q=0$, in which case $u\in H^2(\R^d)$.
\end{proposition}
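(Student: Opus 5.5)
The plan is to prove (i)$\Rightarrow$(ii) with the Brezis--Lions Lemma and elliptic regularity, and (ii)$\Rightarrow$(i) through the distributional identity of Lemma \ref{le:csdi}.

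\emph{(i)$\Rightarrow$(ii).} Let $u$ be as in (i). We first check the hypotheses of Proposition \ref{lem:Brezis-Lions}.
\begin{itemize}
\item Since $u\in\mathcal{C}^2(\R^d\setminus\{0\})$, $\Delta u$ is locally integrable on $\R^d\setminus\{0\}$.
\item For the lower bound (ii), take $g:=\sigma|u|^{p-1}u$ when $\sigma=-1$, and $g:=-\bigl(u^-\bigr)^p$ when $\sigma=1$ (indeed $\sigma|u|^{p-1}u\geqslant -(u^-)^p$). By \eqref{eq:below-green}, $|g|\lesssim\mathcal{G}_\lambda^p$ near the origin, and $\mathcal{G}_\lambda^p\in L^1_{\mathrm{loc}}$ by \eqref{lpgreen} in the regime \eqref{reg:pd}. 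Away from the origin $g$ is continuous, so $g\in L^1_{\mathrm{loc}}(\R^d)$.
\item Hypothesis (iii) of Proposition \ref{lem:Brezis-Lions} is exactly \eqref{eq:below-green}.
\end{itemize}
Proposition \ref{lem:Brezis-Lions} then gives $u\in L^1_{\mathrm{loc}}(\R^d)$ and $(-\Delta+\lambda)u=h+q\delta$ in $\mathcal{D}'(\R^d)$, with $h\in L^1_{\mathrm{loc}}$. Since $h$ must agree with $\sigma|u|^{p-1}u$ off the origin, $(-\Delta+\lambda)u=\sigma|u|^{p-1}u+q\delta$. Set $f:=u-q\mathcal{G}_\lambda$, so that $(-\Delta+\lambda)f=\sigma|u|^{p-1}u=:N$.

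\emph{Upgrading the regularity of $f$.} This is the step I expect to be the main obstacle. When $\sigma=-1$ we have the two-sided bound $|N|\lesssim\mathcal{G}_\lambda^p$ near $0$ from \eqref{eq:below-green}. When $\sigma=1$ only the lower bound is available, so I would bootstrap:
\begin{enumerate}
\item Since $N\in L^1_{\mathrm{loc}}$, we get $f\in W^{1,r}_{\mathrm{loc}}$ for $r<\frac{d}{d-1}$, hence $f\in L^{t}_{\mathrm{loc}}$ for $t<\frac{d}{d-2}$ when $d=3$, and for every $t<\infty$ when $d=2$.
\item From $u=f+q\mathcal{G}_\lambda$ and the local integrability of $\mathcal{G}_\lambda^{p}$ we get $|u|^p\in L^{1+\epsilon}_{\mathrm{loc}}$ at least in the subcritical range, which improves the integrability of $N$.
\item Iterate: each round improves $f$ through the mapping properties of $(-\Delta+\lambda)^{-1}$ on $L^r$, until $N\in L^r$ near $0$ for some $r>1$ with $\mathcal{G}_\lambda^p\in L^r$.
\end{enumerate}

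\emph{Behaviour at infinity.} For large $|x|$, the decay $u\to0$ makes $|u|^{p-1}$ small, so $(-\Delta+\lambda-\sigma|u|^{p-1})u=0$ outside a ball. A comparison argument with $e^{-\sqrt{\lambda}|x|/2}$ gives exponential decay, and hence $N\in L^2$ away from the origin.

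\emph{Concluding (ii).} Combining the two regions, $N\in L^r\cap L^2$, and therefore $N\in H^{s-2}(\R^d)$ by the dual Sobolev embedding for suitable $s$ in the stated range: $s\in[1,2]$ for $d=2$ since $\mathcal{G}_\lambda^p\in L^r$ for every $r$, and $s>\frac12$ small enough for $d=3$, using $p<3$. Then $f=(-\Delta+\lambda)^{-1}N\in H^s$.
\begin{itemize}
\item If $q=0$, then $u=f$ solves the equation on all of $\R^d$; a bootstrap gives $u\in H^2$ and, by elliptic regularity, $u\in\mathcal{C}^2(\R^d)$. Take $\alpha=\infty$.
\item If $q\neq0$, choose $\alpha$ so that $u\in H^s_\alpha$ according to Proposition \ref{pr:hasp}. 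In the free ranges any $\alpha\in\R$ works. In the trace ranges, $f(0)$ exists and we pick $\alpha$ solving $\beta_\alpha(\lambda)q=f(0)$, which is possible because $\beta_\alpha$ is affine in $\alpha$.
\end{itemize}
Lemma \ref{le:csdi}(i) with $\omega=\lambda$ then gives $(-\Delta_\alpha+\lambda)u=(-\Delta+\lambda)f=N$ in $H^{s-2}(\R^d)$.

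\emph{Identifying $q$ and the regular case.} To see that $q$ satisfies \eqref{eq:qlim}, we need $f=o(\mathcal{G}_\lambda)$ at $0$. This follows from the local bound on $f$: $f$ is bounded if $d=2$, or if $d=3$ with $p<2$, and $f\lesssim|x|^{2-p}$ or $\ln|x|$ if $d=3$ with $p\geqslant2$. These bounds come from convolving $\mathcal{G}_\lambda$ with $N=O(|x|^{-p})$. For the regular case, if $q\neq0$ then $|u|\to\infty$ at the origin, so $u$ cannot extend to a $\mathcal{C}^2$ function; the converse was shown above.

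\emph{(ii)$\Rightarrow$(i).} Write $u=f+q\mathcal{G}_\lambda$, with $f\in H^s$ by Proposition \ref{pr:hasp} and the fact that $\mathcal{G}_\omega-\mathcal{G}_\lambda\in H^2$. Lemma \ref{le:csdi}(i) gives $(-\Delta+\lambda)u=\sigma|u|^{p-1}u+q\delta$. Restricted to $\R^d\setminus\{0\}$ this is \eqref{eq:singular_elliptic} in the distributional sense, and local elliptic bootstrap yields $u\in\mathcal{C}^2(\R^d\setminus\{0\})$. For the decay at infinity, the bootstrap gives $u\in H^2$ outside a ball, hence $u\to0$ in $d\leqslant3$. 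Finally, \eqref{eq:below-green} follows from $f=o(\mathcal{G}_\lambda)$: both the upper and lower bounds hold with any $c>|q|$ on a small ball $B_R^*$. The $o(\mathcal{G}_\lambda)$ bound for $f$ follows from the same local bootstrap on $(-\Delta+\lambda)f=\sigma|u|^{p-1}u$, using $|u|^p\lesssim\mathcal{G}_\lambda^p$ near $0$.
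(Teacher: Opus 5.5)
\textbf{Verdict: genuine gap.} Most of your argument follows the paper's route. This covers Brezis--Lions with a one-sided $g$, the identity $(-\Delta+\lambda)u=\sigma|u|^{p-1}u+q\delta$, exponential decay at infinity, and the integrability bootstrap up to $f\in H^{(\frac72-p)-}$ (resp.\ $H^2$). It also covers the choice of $(s,\alpha)$ and Lemma \ref{le:csdi} for (ii)$\Rightarrow$(i).

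\textbf{Where it fails.} The gap is the pointwise statement $f=o(\mathcal{G}_\lambda)$ at the origin. Both \eqref{eq:qlim} and, in (ii)$\Rightarrow$(i), the bound \eqref{eq:below-green} depend on it. In the trace regime it does follow from integrability alone: you have $f\in W^{2,r}_{\mathrm{loc}}$ with $r>d/2$, so $f$ is bounded. But for $d=3$, $p\in[2,3)$, the regularity $f\in H^{(\frac72-p)-}\cap W^{2,(3/p)-}_{\mathrm{loc}}$ gives no pointwise information.

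There you justify the bound by ``convolving $\mathcal{G}_\lambda$ with $N=O(|x|^{-p})$'' and ``using $|u|^p\lesssim\mathcal{G}_\lambda^p$ near $0$''. That upper bound is not available. For $\sigma=1$, hypothesis \eqref{eq:below-green} is only a lower bound. In (ii)$\Rightarrow$(i) you have no pointwise information on $u$ at all, for either sign of $\sigma$. There, $|u|\lesssim\mathcal{G}_\lambda$ near $0$ is essentially what you are trying to prove, so the argument is circular.

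\textbf{The missing idea.} You need a way to get pointwise control from integrability alone. A naive pointwise bootstrap does not close, because the part of $f$ generated by $q\mathcal{G}_\lambda$ is itself unbounded and has to be separated first. The paper does this with a Naito--Sato type splitting $f=f_{\mathrm{reg}}+f_{\mathrm{rad}}$. Define radial iterates
\[
f^{(k)}_{\mathrm{rad}}=\mathcal{G}_\lambda*\sigma\bigl|f^{(k-1)}_{\mathrm{rad}}+q\mathcal{G}_\lambda\bigr|^{p-1}\bigl(f^{(k-1)}_{\mathrm{rad}}+q\mathcal{G}_\lambda\bigr),\qquad f^{(k)}_{\mathrm{reg}}=f-f^{(k)}_{\mathrm{rad}}.
\]
By the Lipschitz bound for $t\mapsto|t|^{p-1}t$, the source term of $f^{(k)}_{\mathrm{reg}}$ is bounded by $\bigl(|f^{(k-1)}_{\mathrm{reg}}|^{p-1}+|f^{(k-1)}_{\mathrm{rad}}+q\mathcal{G}_\lambda|^{p-1}\bigr)|f^{(k-1)}_{\mathrm{reg}}|$. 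H\"older and Hardy--Littlewood--Sobolev then improve the integrability exponent by $1/\theta_k=1/\theta_{k-1}+(p-3)/3$. After finitely many steps $f_{\mathrm{reg}}\in L^\infty$.

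The radial piece lies in $H^{(\frac72-p)-}_{\mathrm{rad}}$. The radial Sobolev embedding (Cho--Ozawa) then gives $|f_{\mathrm{rad}}(x)|\lesssim|x|^{2-p-}=o(|x|^{-1})$, without any a priori upper bound on $u$.

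\textbf{Minor point.} The implication $u\in\mathcal{C}^2(\R^d)\Rightarrow q=0$ does not need \eqref{eq:qlim}. If $u$ is $\mathcal{C}^2$ on $\R^d$, its distributional Laplacian is a continuous function, so the Dirac term must vanish.
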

\begin{proof}
We divide the proof into three steps.\par\smallskip
\emph{Step 1: (i)$\Rightarrow$(ii).} Assume that (i) holds. In particular, $u\in L^1_{\mathrm{loc}}(\R^d\setminus\{0\})$ and
\begin{equation*}\Delta u=\lambda u-\sigma|u|^{p-1}u \in L^1_{\mathrm{loc}}(\R^d\setminus\{0\}).\end{equation*} Moreover, the condition $\sigma u\geqslant -c\mathcal{G}_{\lambda}$ on $B^*_R$, valid for $\sigma=\pm 1$ in view of \eqref{eq:below-green}, implies
\begin{equation*}(-\Delta+\lambda)u=\sigma|u|^{p-1}u\geqslant  -c^p\mathcal{G}_{\lambda}^p\quad\mbox{on }B^*_R.\end{equation*}
Hence, setting 
\begin{equation*}
g(x)=\begin{cases}
-c^p\mathcal{G}_{\lambda}^p&x\in B_R^*\\
-|u|^p&x\not\in B_R^*,
\end{cases}
\end{equation*}
we have $g\in L^1_{\mathrm{loc}}(\R^d)$ and $(-\Delta+\lambda) u(x)\geqslant g(x)$ a.e.~in $\R^{d}$. We can then apply Proposition \ref{lem:Brezis-Lions}, which yields $u\in L^1_{\mathrm{loc}}(\R^d)$ and the identity
\begin{equation*}-\Delta u=\varphi + q\delta\quad\mbox{in }\mathcal{D}'(\R^d),\end{equation*}
for some $q\in\R$ and $\varphi\in L_{\mathrm{loc}}^1(\R^d)$. Since \eqref{eq:singular_elliptic} is satisfied, we necessarily have $|u|^{p-1}u\in L^1_{\mathrm{loc}}(\R^d)$ and $\varphi=-\lambda u+\sigma|u|^{p-1}u$, namely
\begin{equation}\label{eq:equ}
(-\Delta+\lambda)u=\sigma|u|^{p-1}u+q\delta
\end{equation}
holds as an identity in $\mathcal{D}'(\R^d)$. 

Since $u(x)\to 0$ as $|x|\to\infty$, there exists $R_0>0$ such that
$|u|^{p-1}\leqslant\frac{\lambda}{2}$ in $\mathbb R^d\setminus B_{R_0}$. Moreover, by Kato's
inequality,
\begin{equation*}
(-\Delta+\lambda)|u|\leqslant \sigma |u|^p
\end{equation*}
in the sense of distributions outside $B_{R_0}$. Hence, in the source case
$\sigma=1$, and a fortiori in the absorption case $\sigma=-1$, one has
\begin{equation*}
\big(-\Delta+{\textstyle{\frac{\lambda}{2}}}\big)|u|\leqslant 0
\qquad\text{in }\mathbb R^d\setminus B_{R_0}.
\end{equation*}
Comparing $|u|$ in exterior annuli with
$C\mathcal G_{\lambda/2}+\varepsilon$, and then letting the outer radius tend to
infinity and $\varepsilon\downarrow 0$, we obtain
\begin{equation*}
|u(x)|\leqslant C\mathcal G_{\lambda/2}(x)
\qquad \text{for } |x|\geqslant R_0.
\end{equation*}
Hence $u$ decays exponentially to zero at infinity, and in particular both $u$ and $|u|^{p-1}u$ belong to $L^1(\mathbb R^d)$. Then, by \eqref{eq:equ}, we can write
\begin{equation}\label{eq:decf}
u=f+q\mathcal{G}_{\lambda},
\end{equation}
where $f\in (-\Delta+\lambda)^{-1}L^{1}(\R^d)$ satisfies 
\begin{equation}\label{eq:ellif}
(-\Delta+\lambda)f=\sigma|u|^{p-1}u.
\end{equation}
As a consequence of \eqref{eq:decf}, \eqref{eq:ellif} and \eqref{lpgreen}, $u$ belongs to $L^r(\R^d)$ for every $r\in[1,\infty)$ when $d=2$, and for every $r\in[1,3)$ when $d=3$. Thus we get
\begin{equation}\label{lpup}
|u|^{p-1}u\in\begin{cases}
L^{\infty-}(\R^d)&d=2\\
L^{\frac{3}{p}-}(\R^d)&d=3.
\end{cases}
\end{equation}
Owing to \eqref{eq:ellif}, \eqref{lpup}, Calderon-Zygmund estimates for Bessel potentials and Sobolev embedding, the regularity of $f$ is then improved to
\begin{equation}\label{eq:improved_regularity}
f\in\begin{cases}
	H^2(\R^d)&d=2,\,p\in(1,\infty)\mbox{ or }d=3,\,p\in(1,\frac32)\\
	H^{(\frac72 - p)-}(\R^d)&d=3,\,p\in[\frac32,3).
\end{cases}
\end{equation}
In particular, $f\in H^s(\R^d)$ for $s=1$ when $d=2$ or $s\in(\frac12,\frac32)$ when $d=3$ (note indeed that $\frac{7}{2}-p>\frac{1}{2}$ for every $p<3$). This is exactly the range of $s$ covered by point (ii) of Proposition \ref{pr:hasp}, where the regular part is well-defined but no boundary condition appears. In particular, with the above choice of $s$, and taking $\alpha\in\R$ arbitrary, we obtain $u=f+q\mathcal{G}_{\lambda}\in H_{\alpha}^s(\R^d)$. Moreover, in view of Lemma \ref{le:csdi}, equation \eqref{eq:equ} implies that $u$ satisfies \eqref{eq:point_elliptic1} as an identity in $H^{s-2}(\R^d)$, thus proving (ii). \par\smallskip

\emph{Step 2: (ii)$\Rightarrow$(i).} Suppose that condition (ii) holds. Let $u=f+q\mathcal{G}_{\lambda}$, for some $q\in\R$ and $f\in H^s(\R^d)$ with $s\in[1,2]$ when $d=2$, $s\in(\frac12,2]$ when $d=3$. In view of Lemma \ref{le:csdi}, identity \eqref{eq:point_elliptic1} can be rewritten, in the sense of distributions on $\R^d$, as
\begin{equation*}
(-\Delta+\lambda)u=\sigma|u|^{p-1}u+q\delta.
\end{equation*}

In particular, $u$ satisfies equation \eqref{eq:singular_elliptic} on $\R^d\setminus\{0\}$, and elliptic regularity yields $u\in\mathcal{C}^2(\R^d\setminus\{0\})\cap H^{\frac{d}{2}+}(\R^d\setminus B_1)$, which entails also, by Sobolev-Morrey embedding, that $u$ is uniformly continuous on $\R^d\setminus B_1$. This readily implies $u(x)\to 0$ as $|x|\to +\infty$. Even though the proof of this fact is standard, we report it here for the sake of completeness. 

Suppose by contradiction that $u(x)$ does not tend to zero as $|x|\to\infty$.
Then there exist $c>0$ and a sequence $(x_j)$ with $|x_j|\to\infty$ such that
$|u(x_j)|\geqslant c$ for every $j$. By uniform continuity there exists $r>0$,
independent of $j$, such that
\begin{equation*}
	|u(x)|\geqslant\frac{c}{2}
	\qquad\text{for every }x\in B_r(x_j)\text{ and every }j.
\end{equation*}
Eventually passing to a subsequence, we may assume that the balls $B_r(x_j)$ are pairwise
disjoint. Consequently,
\begin{equation*}
	\int_{\R^d}|u|^2\,dx
	\geqslant\sum_{j=1}^{\infty}\int_{B_r(x_j)}|u|^2\,dx
	\geqslant\frac{c^2}{4}\sum_{j=1}^{\infty}|B_r|
	=+\infty,
\end{equation*}
contradicting $u\in L^2(\R^d)$.  Hence $u(x)\to0$ as $|x|\to\infty$.

We are left to show the bound \eqref{eq:below-green}. For later purposes, we are going to prove a stronger fact, namely
\begin{equation}\label{limfzero}
\lim_{x\to 0}\mathcal{G}_{\lambda}^{-1}f(x)=0.
\end{equation}
To this aim, we start by observing that \eqref{eq:equ} and \eqref{eq:ellif} are equivalent as identities in $\mathcal{D}'(\R^d)$, thus arguing as before we get \eqref{eq:improved_regularity}  (note that this may provide a better integrability for $f$ than originally assumed). In particular, when $d=2$, $p\in(1,\infty)$ or $d=3$, $p\in(1,2)$, we have $f\in L^{\infty}(\R^d)$, and \eqref{limfzero} follows by the singular behavior of $\mathcal{G}_{\lambda}$ at the origin. When instead $d=3$, $p\in[2,3)$, let us show that we can write
\begin{equation}\label{f_dec_rr}
f=f_{\mathrm{reg}}+f_{\mathrm{rad}},\quad f_{\mathrm{reg}}\in L^{\infty}(\R^3),\quad f_{\mathrm{rad}}\in H_{\mathrm{rad}}^{(\frac72-p)-}(\R^3).
\end{equation}
To this aim, we use a bootstrap procedure inspired by the approach developed in \cite{Naito-Sato}. Let us set $f_{\mathrm{reg}}^{(0)}=f$, $f_{\mathrm{rad}}^{(0)}=0$, and for $k\geqslant 1$ we inductively construct $f_{\operatorname{reg}}^{(k)}$ and $f_{\operatorname{rad}}^{(k)}$ as follows:
\begin{equation}\label{def_reg_rad}
f_{\mathrm{rad}}^{(k)}=\mathcal{G}_{\lambda}*\sigma\Big(|f_{\mathrm{rad}}^{(k-1)}+q\mathcal{G}_{\lambda}|^{p-1}(f_{\mathrm{rad}}^{(k-1)}+q\mathcal{G}_{\lambda})\Big),\qquad f_{\mathrm{reg}}^{(k)}=f-f_{\mathrm{rad}}^{(k)}.
\end{equation}
Note that for every $k\geqslant 0$, $f_{\mathrm{rad}}^{(k)}$ is radial and moreover
 \begin{equation}\label{eq:fH}
 f_{\mathrm{rad}}^{(k)},\;f_{\mathrm{reg}}^{(k)}\in H^{(\frac72-p)-}(\R^3)\hookrightarrow L^{3+}(\R^3).
 \end{equation}
This is clear for $k=0$ in view of \eqref{eq:improved_regularity}, and follows inductively for every $k\geqslant 1$ from the mapping properties of Bessel potentials, the bound \eqref{lpgreen} and the radial symmetry of $\mathcal{G}_{\lambda}$. Let us show that the integrability of $f_{\mathrm{reg}}^{(k)}$ actually improves at each step, eventually reaching $L^{\infty}$. To start with, we use \eqref{eq:ellif} and  \eqref{def_reg_rad} to obtain
\begin{equation}\label{eq:stica}
	\begin{split}
	\sigma(-\Delta+\lambda)f_{\operatorname{reg}}^{(k)}&=\big|f_{\operatorname{reg}}^{(k-1)}+f_{\operatorname{rad}}^{(k-1)}+q\mathcal{G}_{\lambda}\big|^{p-1}\big(f_{\operatorname{reg}}^{(k-1)}+f_{\operatorname{rad}}^{(k-1)}+q\mathcal{G}_{\lambda}\big)\\
&-\big|f_{\operatorname{rad}}^{(k-1)}+q\mathcal{G}_{\lambda}\big|^{p-1}\big(f_{\operatorname{rad}}^{(k-1)}+q\mathcal{G}_{\lambda}\big).
\end{split}
\end{equation}
The above identity and Lipschitz estimates for the map $x\mapsto |x|^{p-1}x$ thus yield
\begin{equation}\label{eq:slpit_fk}
\big|	(-\Delta+\lambda)f_{\operatorname{reg}}^{(k)}\big|\lesssim \Big(\big|f_{\operatorname{reg}}^{(k-1)}\big|^{p-1}+\big|f_{\operatorname{rad}}^{(k-1)}+q\mathcal{G}_{\lambda}\big|^{p-1}\Big)\big| f_{\operatorname{reg}}^{(k-1)}\big|.
\end{equation}
If $f_{\mathrm{reg}}^{(k-1)}\in L^{(\theta_{k-1})-}(\R^3)$, with $\theta_{k-1}\in(3,\infty)$, then setting 
\begin{equation*}\frac{1}{\om_{k-1}}=\frac{p-1}{3}+\frac{1}{\theta_{k-1}}\end{equation*}
we obtain 
\begin{equation}\label{reg_rhs}
\Big(\big|f_{\mathrm{reg}}^{(k-1)}\big|^{p-1}+\big|f_{\mathrm{rad}}^{(k-1)}+q\mathcal{G}_{\lambda}\big|^{p-1}\Big)f_{\mathrm{reg}}^{(k-1)}\in L^{(\om_{k-1})-}(\R^3),
\end{equation}
where we used H\"older inequality and exploited that $\mathcal{G}_{\lambda}\in L^{3-}(\R^3)$. Hence, setting
\begin{equation}\label{eq:theta_k}
\frac{1}{\theta_k}=\frac{1}{\omega_{k-1}}+\frac{1}{3}-1=\frac{p-3}{3}+\frac{1}{\theta_{k-1}},
\end{equation}
we deduce by \eqref{eq:slpit_fk}, \eqref{reg_rhs} and Hardy-Littlewood-Sobolev inequality that
\begin{equation}\label{eq:f_k}
f_{\mathrm{reg}}^{(k)}\in\begin{cases}
	L^{\theta_k-}(\R^3)&\mbox{if }\theta_k\geqslant 0\\
	L^{\infty}(\R^3)&\mbox{if }\theta_k<0.
\end{cases}
\end{equation}
Choosing then $K$ sufficiently large so that
\begin{equation*}\frac{1}{\theta_0}+K\left(\frac{p-3}{3}\right)<0,\end{equation*}
we get $f_{\mathrm{reg}}^{(K)}\in L^{\infty}(\R^3)$, as desired. The decomposition \eqref{f_dec_rr} then follows by setting $f_{\mathrm{reg}}:=f_{\mathrm{reg}}^{(K)}$, $f_{\mathrm{rad}}:=f_{\mathrm{rad}}^{(K)}$.
Using weighted Sobolev embedding for radial functions, see e.g.~\cite[Proposition 1]{cho-ozawa}, we deduce
\begin{equation*}|f_{\mathrm{rad}}(x)|\lesssim |x|^{-\frac32+(\frac72-p)-}=o(\mathcal{G}_{\lambda})\quad\mbox{as }|x|\to 0,\end{equation*}
which combined with $f_{\mathrm{reg}}\in L^{\infty}(\R^3)$ implies \eqref{limfzero}, thus proving (i).\par\smallskip

\emph{Step 3: the charge.} In view of \eqref{limfzero}, we have
\begin{equation*} \lim_{x\to 0}\mathcal{G}^{-1}_{\lambda}u(x)=\lim_{x\to 0}\mathcal{G}^{-1}_{\lambda}f(x)+q=q,\end{equation*}
namely \eqref{eq:qlim} holds. Moreover, observe that $u\in\mathcal{C}^2(\R^d)\Rightarrow q=0$. Conversely, if $q=0$, then $u$ satisfies
\begin{equation*}
(-\Delta+\lambda)u=\sigma|u|^{p-1}u\quad\mbox{on }\;\R^d,
\end{equation*}
and by elliptic regularity we get $u\in\mathcal{C}^2(\R^d)\cap H^2(\R^d)$, concluding the proof.
\end{proof}

We are now able to prove Theorems \ref{th:main_equivalence} and \ref{th:equivalence-weak}. To this aim, we are going to exploit the optimal Sobolev regularity of the regular component $f$, proved in \eqref{eq:improved_regularity} along the proof of Proposition \ref{prop:equivalence}. More specifically, in the trace regime we have $f\in H^{\frac{d}{2}+}(\R^d)$, which allows to define a boundary condition by selecting a specific value of $\alpha$, while in the free regime $f$ is unbounded at the origin, and thus no boundary condition can be determined. We also recall that, in the Trace Regime,  $u\in H_{\alpha}^{1}(\R^d)$ is a critical point of $S_{\lambda,\alpha}$ if and only if it satisfies \eqref{eq:point_elliptic1} as an identity in $H_{\alpha}^{-1}(\R^d)$; see e.g.~\cite[Proposition 4.2]{PoWa}.

\begin{proof}[Proof of Theorem \ref{th:main_equivalence}]
Let us start by proving the equivalence between conditions (i), (ii) and (iii). \par\smallskip
\emph{(i) $\Rightarrow$ (ii).} In view of Proposition \ref{prop:equivalence} and formula \eqref{eq:improved_regularity}, we can write $u=f+q\mathcal{G}_{\lambda}$, with $f\in H^s(\R^d)$, $s>\frac{d}{2}$, and $q\in\R$, and \eqref{eq:point_elliptic1} holds in $H^{s-2}(\R^d)$. By Sobolev embedding, $f\in\mathcal{C}(\R^d)$. Moreover, let us choose $\alpha=\infty$ when $q=0$, and the unique $\alpha\in\R$ determined by \eqref{eq:q-alpha} when $q\neq 0$. Owing to point (iii) of Proposition \ref{pr:hasp}, we have that $u\in H_{\alpha}^s(\R^d)$. Finally, since $H_{\alpha}^{s-2}(\R^d)\cong H^{s-2}(\R^d)$ for $s>\frac{d}{2}$, as it follows from point (i) of Proposition \ref{pr:hasp}, we deduce that \eqref{eq:point_elliptic1} holds in $H_{\alpha}^{s-2}(\R^d)$.
    \par\smallskip
    \emph{(ii) $\Rightarrow$ (iii)} Since $s>\frac d2\geqslant 1$, we deduce that $u\in H^1_\alpha(\R^d)$ and \eqref{eq:point_elliptic1}  holds in  $H^{-1}_\alpha(\R^d)$. Then $u$ is a critical point of $S_{\lambda,\alpha}.$\par\smallskip
    \emph{(iii) $\Rightarrow$ (i)} Since $u\in H_{\alpha}^1(\R^d)$ is critical point of $S_{\lambda,\alpha}$, it satisfies \eqref{eq:point_elliptic1} as an identity in $H^{-1}_\alpha(\R^d)$. Observe moreover that $H^{-1}_\alpha(\R^d)\subseteq H^{-1}(\R^d)$, as it follows by point (ii) of Proposition \ref{pr:hasp}. Then we get the thesis by applying the implication (ii) $\Rightarrow$ (i) of Proposition \ref{prop:equivalence}.\smallskip

To conclude, we observe that all the properties stated after the equivalence of (i), (ii) and (iii) have been proved along the present proof or in Proposition \ref{prop:equivalence}.
\end{proof}

\begin{proof}[Proof of Theorem \ref{th:equivalence-weak}] 
The equivalence (i) $\Leftrightarrow$ (ii) and the analysis of the case $q=0$ follow by Proposition \ref{prop:equivalence}. Let now $u=f+q\G_{\lambda}$, with $q\neq 0$. By \eqref{eq:improved_regularity}, $f\in H^s(\R^3)$ for any $s\in(\frac12,\frac{7}{2}-p)$. Since $\frac{7}{2}-p<\frac32$, point (ii) of Proposition \ref{pr:hasp} implies that $u\in H_{\alpha}^s(\R^3)$ for any such $s$ and for $\alpha\in\R$ arbitrary. We are left to show that $f$ is singular at the origin and satisfies the asymptotic \eqref{subleading}. Considering the first nonlinear correction $f_{\mathrm{rad}}^{(1)}$, defined in \eqref{def_reg_rad} and given explicitly by
\begin{equation}\label{eq:leading}
	f_{\mathrm{rad}}^{(1)}=
	\sigma|q|^{p-1}q\,\mathcal{G}_{\lambda}\ast\mathcal{G}_{\lambda}^p
	\sim
	\begin{cases}
		\ln{|x|},&p=2,\\
		|x|^{2-p},&p\in(2,3),
	\end{cases}
	\qquad\mbox{as }x\to 0,
\end{equation}
we equivalently need to prove that $f\sim f_{\mathrm{rad}}^{(1)}$ as $x\to 0$. To this aim, let us write $f=f_{\mathrm{rad}}+f_{\mathrm{reg}}$ as in \eqref{f_dec_rr}. Since $f_{\mathrm{reg}}$ is bounded, it is enough to show that \begin{equation}\label{first-correction}
	f_{\mathrm{rad}}\sim f_{\mathrm{rad}}^{(1)}\quad\mbox{as }x\to 0.\end{equation}
We recall that $f_{\mathrm{rad}}=f_{\mathrm{rad}}^{(K)}$, for a suitable $K\in\N^+$, is given explicitly by \eqref{def_reg_rad}.
Arguing as in \eqref{eq:stica}-\eqref{eq:slpit_fk}, Lipschitz estimates for the map $x\mapsto |x|^{p-1}x$ yield
$$\big|(-\Delta+\lambda)\big(f_{\mathrm{rad}}-f_{\mathrm{rad}}^{(1)}\big)\big|\lesssim \Big(|f_{\mathrm{rad}}^{(K-1)}+q\mathcal{G}_{\lambda}|^{p-1}+|q\mathcal{G}_{\lambda}|^{p-1}\Big)\big|f_{\mathrm{rad}}^{(K-1)}\big|.$$
Since $\mathcal{G}_{\lambda}\in L^{3-}(\R^3)$ and $f_{\mathrm{rad}}^{(K-1)}\in H^{(\frac72-p)-}(\R^3)\hookrightarrow L^{3+}(\R^3)$, in view of \eqref{lpgreen} and \eqref{eq:fH}, H\"older inequality then gives
$$(-\Delta+\lambda)\big(f_{\mathrm{rad}}-f_{\mathrm{rad}}^{(1)}\big)\in L^{\frac{3}{2p-3}-}(\R^3),$$
and using the Hardy-Littlewood-Sobolev inequality we eventually obtain
$$f_{\mathrm{rad}}-f_{\mathrm{rad}}^{(1)}\in \begin{cases}
L^{\infty}(\R^3)&\mbox{if }p\in[2,\frac52),\\
H^{\theta(p)-}(\R^3)&\mbox{if }p\in(\frac52,3),
\end{cases}$$
where $\theta(p)=\frac{13-4p}{2}$. In particular, if $p\in[2,\frac52)$, \eqref{first-correction} immediately follows. In the regime $p\in(\frac52,3)$, the weighted Sobolev embedding for radial functions (see e.g.~\cite[Proposition 1]{cho-ozawa}) yields
\begin{equation*}|f_{\mathrm{rad}}(x)-f_{\mathrm{rad}}^{(1)}(x)|\lesssim |x|^{-\frac32+\theta(p)-}=o(|x|^{2-p})\quad\mbox{as }|x|\to 0,\end{equation*}
which proves \eqref{first-correction} and concludes the proof.
\end{proof}

\begin{remark}\label{re:strong} 
Let us comment on the whole range of parameters $(s,\alpha)$ for which condition (ii) of Proposition \ref{prop:equivalence} holds.

In the regular case (i.e.~$q=0$), with the choice $\alpha=\infty$, every $s\in[1,2]$ when $d=2$, $s\in(\frac12,2]$ when $d=3$ is admissible. In addition, if $s<\frac d2$ or $u(0)=0$, then $\alpha$ can be chosen arbitrarily in $\overline{\R}$.

Let us consider now the singular case, namely $q\neq 0$. In the Free Regime, the possible choices are $s\in(\frac12,\frac72-p)$ and $\alpha\in\R$ arbitrary, as stated in Theorem \ref{th:equivalence-weak}. We now turn to the Trace Regime. Concerning the regularity level $s$, the admissible values are 
\begin{equation}
	s\in\begin{cases}\label{eq:admissible_s}
		[1,2]&d=2,\,p>1\\
		(\frac12,2]&d=3,\,p\in(1,\frac32)\\
		(\frac12,\frac72 - p)&d=3,\,p\in[\frac32,2).
	\end{cases}
\end{equation}
When $s>\frac{d}{2}$, the only choice for $\alpha$ is that given by \eqref{eq:q-alpha}, since we are in case (iii) of Proposition \ref{pr:hasp} on the characterization of adapted Sobolev spaces. The same is true when $d=3$, $s=\frac32$, since \eqref{tremezzi} and the inclusion $H^{3/2+}(\R^3)\subseteq \widetilde{H}^{3/2}(\R^3)$ imply that the regular component $f$ and the charge $q$ must be related through condition \eqref{eq:q-alpha}. When $s=1$, $d=2$ or $s\in(\frac12,\frac32)$, $d=3$, condition (ii) holds instead for arbitrary $\alpha\in\R$, since we are in case (ii) of Proposition \ref{pr:hasp}.

Finally, we point out that for every $s\in(\frac12,\frac32]$ we have 
$$\mbox{\eqref{eq:point_elliptic1} holds in } H_{\alpha}^{s-2}(\R^3)\Leftrightarrow \alpha\mbox{ is given by \eqref{eq:q-alpha}}.$$
The implication ($\Leftarrow$) is immediate, since by Theorem \ref{th:main_equivalence} equation \eqref{eq:point_elliptic1} actually holds in $H_{\alpha}^{-\frac{1}{2}+}(\R^3)$ if $\alpha$ is given by \eqref{eq:q-alpha}. To check the implication ($\Rightarrow$), we use the following argument. We have that $\operatorname{span}\{\mathcal{G}_{\lambda}\}$ is a closed subspace of $H_{\alpha}^{2-s}(\R^3)$, as follows from Proposition \ref{pr:hasp} and point (i) of Remark \ref{re:transition}. Hence, if \eqref{eq:point_elliptic1} holds in  $H_{\alpha}^{s-2}(\R^3)$, then in particular  
\begin{equation}\label{eq:seteg}
\big\langle (-\Delta_{\alpha}+\lambda)u-\sigma|u|^{p-1}u,\mathcal{G}_{\lambda}\big\rangle_{H_{\alpha}^{s-2}\times H_{\alpha}^{2-s}}=0.
\end{equation}
Now we recall \eqref{eq:test_green}, namely
	\begin{equation*}
		\big\langle (-\Delta_{\alpha}+\lambda)u,\mathcal{G}_{\lambda}\big\rangle_{H_{\alpha}^{s-2}\times H_{\alpha}^{s-2} }=\beta_{\alpha}(\lambda)q.
		\end{equation*}
	On the other hand, 
	\begin{equation}\label{tr}
		\begin{split}
	\big\langle \sigma|u|^{p-1}u,\mathcal{G}_{\lambda}\big\rangle_{H_{\alpha}^{s-2}\times H_{\alpha}^{2-s}}&=\int_{\R^d} \sigma|u|^{p-1}u\mathcal{G}_{\lambda}\,\ud x\\ &=\int_{\R^d} (-\Delta+\lambda)f\,\cdot\,\mathcal{G}_{\lambda}\,\ud x=f(0),
	\end{split}
	\end{equation}
	where we used that $f\in\mathcal{C}_{b}(\R^3)$ and $\mathcal{G}_{\lambda}^{p+1}\in L^1(\R^3)$ in the first step, identity \eqref{eq:ellif} in the second step, and $f\in H^{\frac{d}{2}+}(\R^3)$ in the last step. Using \eqref{eq:test_green}, \eqref{tr}, we deduce that \eqref{eq:seteg} is satisfied if and only if $\beta_{\alpha}(\lambda)q=f(0)$, as desired.
\end{remark}

\subsection{Existence of critical points of the action}\label{sec:var}
We prove here Theorem \ref{th:main_critical_points} on the existence of infinitely many singular, radial critical points of the action functional defined by \eqref{eq:action}.

From now on, to ease the notation, we set $\D:=H_{\alpha}^1(\R^d)$. To start with, we recall the explicit expression \eqref{eq:quad_form1} for the quadratic form $\mathcal{Q}_{\alpha}$ associated to $-\Delta_{\alpha}$, $\alpha\in\R$, with the convenient choice $\omega=\lambda$: given $u=f+q\mathcal{G}_{\lambda}\in\D$, we have
\begin{equation}\label{eq:quad_form}
	\mathcal{Q}_{\alpha}(u)=\int_{\R^d}\left(|\nabla f|^2+\lambda|f|^2-\lambda|u|^2\right)\,dx+\beta_{\alpha}(\lambda)q^2.
\end{equation}
Then we can write
\begin{equation*}
	S_{\lambda,\alpha}(u)=\frac{1}{2}\int_{\R^d}\left(|\nabla f|^2+\lambda|f|^2\right)\,dx+\frac{\beta_\alpha(\lambda)}{2}q^2-\frac{1}{p+1}\int_{\R^d}|u|^{p+1}\,dx.
\end{equation*}
Moreover, for $\lambda>\lambda_\alpha$, the quantity  
 \begin{equation*}
\|u\|_{\D}:=\left(\int_{\R^d}\left(|\nabla f|^2+\lambda|f|^2\right)\,dx+\beta_\alpha(\lambda)q^2\right)^{\frac{1}{2}}
 \end{equation*}
induces a norm on $\D$ equivalent to the one defined in \eqref{eq:singular_spaces}.

We focus in our analysis to radial solutions: on the one hand, this provides sufficient compactness to deduce the Palais-Smale property (see Lemma \ref{lem:pal-smale} below), on the other hand, radial (non-zero) critical points turn out to be singular, as shown by the following.

\begin{lemma}\label{le:sing_crit}
Fix $\alpha\in\R$, $\lambda>0$. If $u=f+q\mathcal{G}_{\lambda}\in\D$ is a non-zero, radial critical point of $S_{\alpha,\lambda}$, then $u$ is singular, namely $q\neq 0$.
\end{lemma}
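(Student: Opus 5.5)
We argue by contradiction: suppose $q=0$, so that $u=f\in H^1(\R^d)$ is a non-zero radial critical point of $S_{\lambda,\alpha}$. The plan is to extract from criticality two pieces of information. First, $u$ solves the free equation $(-\Delta+\lambda)u=|u|^{p-1}u$. Second, testing against the Green function yields a boundary condition which, when $q=0$, forces $u(0)=0$. We then show that a radial regular solution vanishing at the origin must vanish identically.

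\emph{Step 1 (consequences of criticality).} By Theorem \ref{th:main_equivalence}, (iii)$\Rightarrow$(i), together with Proposition \ref{prop:equivalence}, $u$ is a $\mathcal C^2(\R^d)$ solution. It lies in $H^2(\R^d)$, decays at infinity, and solves $(-\Delta+\lambda)u=|u|^{p-1}u$ on all of $\R^d$, since $q=0$. Its regular part is $f=u$, which is continuous. Criticality means that \eqref{eq:point_elliptic1} holds in $H^{-1}_\alpha(\R^d)$, and $\mathcal G_\lambda\in H^1_\alpha(\R^d)$. Testing against $\mathcal G_\lambda$ and arguing exactly as in Remark \ref{re:strong}, using \eqref{eq:test_green} and the computation \eqref{tr}, gives
\[
\beta_\alpha(\lambda)q=f(0).
\]
Alternatively, one can compute $S'_{\lambda,\alpha}(u)[\mathcal G_\lambda]$ directly from \eqref{eq:quad_form}: the quadratic part contributes $\beta_\alpha(\lambda)q$, and the nonlinear part contributes $\int|u|^{p-1}u\,\mathcal G_\lambda=\int(-\Delta+\lambda)f\,\mathcal G_\lambda=f(0)$. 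Either way, with $q=0$ we obtain $u(0)=f(0)=0$.

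\emph{Step 2 (ODE uniqueness at the origin).} Since $u$ is radial and $\mathcal C^2$, write $u(x)=v(|x|)$. Then $v$ solves
\[
v''+\tfrac{d-1}{r}v'=\lambda v-|v|^{p-1}v,\qquad v(0)=0,\qquad v'(0)=0.
\]
The condition $v'(0)=0$ follows from radial symmetry and smoothness. The nonlinearity $g(v)=\lambda v-|v|^{p-1}v$ is locally Lipschitz because $p>1$. The singular initial value problem at $r=0$ therefore has a unique solution, and it is $v\equiv0$; hence $u\equiv0$, a contradiction.

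The one step requiring care is this uniqueness for the singular ODE. We would rewrite it as the Volterra integral equation
\[
v(r)=\int_0^r s^{1-d}\int_0^s t^{d-1}g(v(t))\,dt\,ds .
\]
Near $0$ this gives $|v(r)|\le \tfrac{r^2}{2d}\,L\sup_{[0,r]}|v|$, where $L$ is the Lipschitz constant of $g$ near $0$. This forces $v\equiv0$ on a small interval $[0,r_0]$. Standard ODE uniqueness away from $r=0$ then propagates $v\equiv 0$ to all $r>0$.
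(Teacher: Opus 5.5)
Your proof is correct and follows the paper's route: you argue by contradiction, obtain $\mathcal C^2$ regularity and $u(0)=0$ from Theorem~\ref{th:main_equivalence}, and reduce to the radial Cauchy problem with zero data at $r=0$. Along the way you usefully make explicit, by testing criticality against $\mathcal G_\lambda$, the boundary condition $\beta_\alpha(\lambda)q=f(0)$ that the paper uses only tacitly. The one difference is the ODE uniqueness step: the paper uses the nonincreasing Lyapunov function $E(r)$ to get $|\tilde u'|\leqslant\sqrt{\lambda}\,|\tilde u|$, hence that $e^{-\sqrt{\lambda}r}|\tilde u(r)|$ is nonincreasing, whereas your Volterra/contraction estimate near $r=0$ followed by standard uniqueness for $r>0$ is equally valid and does not rely on the sign of the nonlinear term.
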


\begin{proof}
Let $u\not\equiv 0$ be a radial critical point of $S_{\lambda,\alpha}$, and suppose by contradiction that $q=0$. Then by Theorem \ref{th:main_equivalence} $u\in\mathcal{C}^2(\R^d)$ solves the system
	\begin{equation*}
		\begin{cases}
			(-\Delta+\lambda)u=|u|^{p-1}u,\\
			u(0)=0.
		\end{cases}
	\end{equation*}
	Since $u$ is radial, we can write $u(x)=\tilde{u}(|x|)$ for some function $\tilde{u}:[0,+\infty)\to \R$ satisfying the system
	\begin{equation*}
		\begin{cases}
			-\tilde{u}''(r)-\frac{d-1}{r}\tilde{u}'(r)+\lambda\tilde{u}(r)=|\tilde{u}(r)|^{p-1}\tilde{u}(r),\\
			\tilde{u}(0)=0.
		\end{cases}
	\end{equation*}
Since $u\in \mathcal{C}^2(\R^d)$, then also $\tilde{u}'(0)=0$. However, the Cauchy problem 
\begin{equation}\label{sing_cauchy}
	\begin{cases}
		-\tilde{u}''(r)-\frac{d-1}{r}\tilde{u}'(r)+\lambda\tilde{u}(r)=|\tilde{u}(r)|^{p-1}\tilde{u}(r),\quad r>0\\
		\tilde{u}(0)=\tilde{u}'(0)=0\\
	\end{cases}
\end{equation}
admits the only solution $\tilde{u}\equiv 0$, yielding a contradiction. 
In order to prove that $\tilde{u}\equiv 0$ is the only solution to \eqref{sing_cauchy}, let us consider the Lyapunov function
\begin{equation*}
	E(r):=\frac{\tilde{u}'(r)^2}{2}+\frac{|\tilde{u}(r)|^p}{p}-\frac{\lambda \tilde{u}(r)^2}{2},\quad r>0.
\end{equation*}
A straightforward computation shows that $E$ is non-increasing on $\R^+$, so that $E(r)\leqslant E(0)=0$ and in particular 
\begin{equation}
	\label{eq:abs-u'}
	|\tilde{u}'(r)|\leqslant\sqrt{\lambda}|\tilde{u}(r)|\quad \text{for every} \quad r>0.
\end{equation} 
If we introduce the function $\omega(r):= e^{-\sqrt{\lambda}r}|\tilde{u}(r)|$, then by using \eqref{eq:abs-u'} one can check that $\omega$ is non-increasing on $\R^+$. Since $\lim_{r\to 0^+} \omega(r)=0$ and $\omega(r)\geqslant 0$ for every $r>0$, it follows that $\omega\equiv 0$ on $\R^+$. This forces $\tilde{u}\equiv 0$ on $\R^+$, concluding the proof.
\end{proof}

In what follows, we thus look for the existence of critical points of $S_{\lambda,\alpha}$ on the Banach space
\begin{equation*}
	\D_{\mathrm{rad}}:=\{u\in \D\,:\,u\,\, \text{is radially symmetric}\},
\end{equation*}
endowed with the $\|\cdot\|_{\D}$ norm. 

We strongly rely on the method developed by Ambrosetti and Rabinowitz in \cite{AR-73}. Let us recall their framework. Let $E$ be an infinite dimensional Banach space over $\R$ and $J:E\to \R$ be a functional of class $\mathcal{C}^1$ satisfying the following hypotheses:
\begin{itemize}
	\item[(i)] $J(0)=0$; 
     \item[(ii)] there exists $\delta,\rho>0$ such that $J>0$ in $B_\rho\setminus\{0\}$ and $J\geqslant \delta$ on $\partial B_\rho$;
	\item[(iii)] for every $E'\subset E$,  $\operatorname{dim}\,(E')<+\infty$, the set $E'\cap\{J\geqslant 0\}$ is bounded;
	
    \item[(iv)] $J(u)=J(-u)$ for every $u\in E$;
	\item[(v)] If $(u_n)_n\subset E$ satisfies 
	\begin{equation*}
		0<J(u_n)\leqslant C,\quad \text{and}\quad J'(u_n)\to 0 \quad\text{as}\quad n\to+\infty,
	\end{equation*}
	then $(u_n)_n$ admits a convergent subsequence.
\end{itemize}

We say that the functional $J$ has the \emph{strong} mountain pass geometry if it satisfies (i)-(iii): indeed, the usual mountain pass geometry property corresponds to consider in (iii) only the case when $\operatorname{dim}(E')=1$. On the other hand,  hypothesis (v) is customary called the Palais-Smale condition.

The following result, proved in \cite{AR-73}, concerns the existence of infinite radial critical points of $J$ under the above assumptions.

\begin{proposition}
	\label{prop:inf-crit-j}
	If (i)-(v) hold, then $J$ admits infinitely many radial critical points.
\end{proposition}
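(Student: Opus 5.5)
The plan is to run the genus-based minimax scheme of Ambrosetti--Rabinowitz on the infinite dimensional space $E$. Here $E$ will be $\D_{\mathrm{rad}}$, so every critical point produced is automatically radial. If one also wants criticality for the functional on all of $\D$, this follows from Palais' principle of symmetric criticality, since $S_{\lambda,\alpha}$ is invariant under rotations. Let $\gamma$ denote the Krasnosel'skii genus of closed symmetric sets not containing $0$.

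\emph{Construction of the minimax classes.} Choose an increasing sequence of subspaces $E_m\subset E$ with $\operatorname{dim}E_m=m$. By (iii), $E_m\cap\{J\geqslant0\}$ is bounded, so there is $R_m>\rho$ with $J<0$ on $E_m\setminus B_{R_m}$. Set $D_m:=B_{R_m}\cap E_m$. Let $G_m$ be the set of odd continuous maps $h:D_m\to E$ with $h=\mathrm{id}$ on $\partial D_m$. For $j\in\N$, let
\[
\Gamma_j:=\big\{h\big(\overline{D_m\setminus Y}\big)\,:\,m\geqslant j,\ h\in G_m,\ Y\subset D_m\ \text{closed, symmetric},\ \gamma(Y)\leqslant m-j\big\},
\]
and define $c_j:=\inf_{A\in\Gamma_j}\max_A J$. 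Each class is nonempty, since $D_j$ itself belongs to $\Gamma_j$ (take $Y=\emptyset$ and $h=\mathrm{id}$). Moreover $\Gamma_{j+1}\subset\Gamma_j$, which gives $c_j\leqslant c_{j+1}$.

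\emph{Intersection lemma.} The key step, and the one I expect to be the main obstacle, is to show that every $A\in\Gamma_j$ meets $\partial B_\rho$. Granting this, (ii) gives $c_j\geqslant\delta>0$. To prove it, fix $h\in G_m$ and consider $\mathcal O:=\{x\in D_m : \|h(x)\|<\rho\}$. This set is a symmetric bounded neighbourhood of $0$ in $E_m$, and $\mathcal O\subset\operatorname{int}D_m$ because $h=\mathrm{id}$ on $\partial D_m$ and $R_m>\rho$. By the Borsuk--Ulam property, $\gamma(\partial\mathcal O)=m$, and $h$ maps $\partial\mathcal O$ into $\partial B_\rho$. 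Then subadditivity of the genus yields
\[
\gamma\big(\partial\mathcal O\cap\overline{D_m\setminus Y}\big)\geqslant m-\gamma(Y)\geqslant j\geqslant1,
\]
so $A\cap\partial B_\rho\neq\emptyset$. Some care is needed with the closure of $D_m\setminus Y$ and with the exact version of the subadditivity inequality.

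\emph{Deformation and multiplicity.} Using (iv) and the Palais--Smale condition (v), which applies since $0<\delta\leqslant c_j$, the equivariant deformation lemma says the following. If $K_c$ denotes the set of critical points at level $c>0$ and $N$ is a symmetric neighbourhood of $K_c$, then there are $\varepsilon>0$ and an odd homeomorphism $\eta$ with $\eta=\mathrm{id}$ on $\{J\leqslant0\}$ and $\eta(\{J\leqslant c+\varepsilon\}\setminus N)\subset\{J\leqslant c-\varepsilon\}$. The condition $\eta=\mathrm{id}$ on $\{J\leqslant 0\}$ makes $\eta\circ h=\mathrm{id}$ on $\partial D_m$, where $J<0$, so $\Gamma_j$ is invariant under $\eta$.

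From this I expect two consequences. First, each $c_j$ is a critical value; otherwise one deforms an almost-optimal set below $c_j$, a contradiction. Second, if $c_j=\dots=c_{j+p}=c$, then $\gamma(K_c)\geqslant p+1$. Indeed, $K_c$ is compact by (v) and does not contain $0$, since $J(0)=0<c$. One takes a neighbourhood $N$ of $K_c$ with $\gamma(\overline N)=\gamma(K_c)$, removes the preimage of $N$ by adding it to $Y$, which lowers the index by at most $\gamma(K_c)$, and then deforms.

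Consequently, either the values $c_j$ take infinitely many distinct levels, or some level carries a set of genus at least $2$, which is therefore infinite. In both cases $J$ has infinitely many critical points, and they are nonzero because their level is at least $\delta>0$.
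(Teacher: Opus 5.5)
Your proof is correct and follows the same route the paper takes: the paper gives no proof of its own and simply invokes the genus-based symmetric mountain pass theorem of \cite{AR-73}, and you reconstruct that theorem with genus minimax levels, an intersection lemma giving $c_j\geqslant\delta$, an odd deformation, and multiplicity via $\gamma(K_c)$. You use Rabinowitz's later minimax classes $h(\overline{D_m\setminus Y})$ rather than the original classes of \cite{AR-73}, which are built from odd homeomorphisms of $E$; your appeal to Palais' symmetric criticality usefully makes explicit the passage from $\D_{\mathrm{rad}}$ to $\D$, which the paper leaves implicit. One small fix: choose $R_m$ so that $J<0$ also on $\partial D_m$ itself, since the paper's $B_R$ is the closed ball, because that is exactly what the invariance of $\Gamma_j$ under the deformation uses.
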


In the next lemma, we prove that the functional $S_{\lambda,\alpha}:\D_{\text{rad}}\to \R$
has the strong mountain pass geometry.

\begin{lemma}
	\label{lem:mount-geom}
	Let $\alpha\in\R$ and $\lambda>\lambda_\alpha$. Then the functional $S_{\lambda,\alpha}:\D_{\text{rad}}\to\R$ has the strong mountain pass geometry.
\end{lemma}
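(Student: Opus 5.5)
We verify (i)--(iii) for $J=S_{\lambda,\alpha}$ restricted to $\D_{\mathrm{rad}}$, working with the norm $\|\cdot\|_{\D}$, which is equivalent to the $H^1_\alpha$ norm since $\lambda>\lambda_\alpha$. In particular $\beta_\alpha(\lambda)>0$, because $\beta_\alpha(\cdot)$ is increasing and vanishes at $\lambda_\alpha$; for $d=3$, $\alpha\geqslant0$ it is positive for every $\lambda>0$. With this norm the action reads
\[
S_{\lambda,\alpha}(u)=\tfrac12\|u\|_{\D}^2-\tfrac1{p+1}\|u\|_{L^{p+1}}^{p+1}.
\]
Condition (i) is immediate, since $S_{\lambda,\alpha}(0)=0$.

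For (ii) we use the continuous embedding $\D\hookrightarrow L^{p+1}(\R^d)$. This holds because $u=f+q\G_\lambda$, where $f\in H^1\hookrightarrow L^{p+1}$ in the admissible range, and $\G_\lambda\in L^{p+1}$ by \eqref{lpgreen} (for $d=3$ we need $p+1<3$, true since $p<2$). Then
\[
\|u\|_{L^{p+1}}\lesssim \|f\|_{H^1}+|q|\lesssim\|u\|_{\D},
\]
the last step using $\beta_\alpha(\lambda)>0$. This gives
\[
S_{\lambda,\alpha}(u)\geqslant \tfrac12\|u\|_{\D}^2-C\|u\|_{\D}^{p+1}.
\]
Since $p+1>2$, choosing $\rho$ small yields $S_{\lambda,\alpha}>0$ on $B_\rho\setminus\{0\}$ and $S_{\lambda,\alpha}\geqslant\delta:=\rho^2/4$ on $\partial B_\rho$.

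For (iii), fix a finite-dimensional subspace $E'\subset\D_{\mathrm{rad}}$. All norms on $E'$ are equivalent, and $\|\cdot\|_{L^{p+1}}$ restricted to $E'$ is a genuine norm, since it is a norm on the ambient space. Hence $\|u\|_{L^{p+1}}\geqslant c_{E'}\|u\|_{\D}$ on $E'$, and
\[
S_{\lambda,\alpha}(u)\leqslant \tfrac12\|u\|_{\D}^2-\tfrac{c_{E'}^{p+1}}{p+1}\|u\|_{\D}^{p+1}.
\]
This quantity is negative for $\|u\|_{\D}$ large, so $E'\cap\{S_{\lambda,\alpha}\geqslant0\}$ is bounded.

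The only mildly delicate point is in (ii): the regular-part norm plus the charge term really controls the $L^{p+1}$ norm of the singular part. This is exactly where $\lambda>\lambda_\alpha$ (positivity of $\beta_\alpha(\lambda)$) and the integrability \eqref{lpgreen} of $\G_\lambda$ enter. Everything else is routine.
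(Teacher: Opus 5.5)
Your proof is correct. It follows the same outline as the paper's: a small-ball lower bound for (ii), and norm equivalence on finite-dimensional subspaces for (iii). The key estimates, however, are obtained more directly.

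\textbf{Condition (ii).} The paper invokes an external Gagliardo--Nirenberg inequality adapted to $H^1_\alpha$, with interpolation index $s=d(\frac12-\frac1{p+1})$. Combined with $\|u\|_{L^2}\lesssim\|u\|_{\D}$, this gives $\|u\|_{L^{p+1}}^{p+1}\lesssim\|u\|_{\D}^{\frac d2(p-1)}\|u\|_{L^2}^{p+1-\frac d2(p-1)}\lesssim\|u\|_{\D}^{p+1}$. You reach the same final bound from three ingredients: the decomposition $u=f+q\G_\lambda$, the Sobolev embedding for $f$, and $\G_\lambda\in L^{p+1}$ via \eqref{lpgreen}. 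This is exactly the embedding the paper itself uses to assert that $S_{\lambda,\alpha}$ is well defined, so your argument is self-contained. The Gagliardo--Nirenberg route additionally keeps the finer split between $\|u\|_{\D}$ and $\|u\|_{L^2}$, which is not needed here.

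\textbf{Condition (iii).} The paper first compares $\|\cdot\|_{\D}$ with the $L^2$ norm on $E'$. It then uses a level-set lower bound, $\int|Rg|^{p+1}\,dx>KR^2\int_{\{|g|>K^{1/(p-1)}R^{-1}\}}|g|^2\,dx$, for $\|g\|_{\D}=1$ and $R$ large. You instead compare directly with the $L^{p+1}$ norm. This is legitimate because $\D\subset L^2$ embeds injectively into $L^{p+1}$, so $\|\cdot\|_{L^{p+1}}$ is a genuine norm on $E'$.

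Your version has a concrete advantage here. Norm equivalence gives the constant $c_{E'}$ uniformly on all of $E'$ at once. In the paper's argument, the threshold $R^\star$ must be chosen uniformly over the unit sphere of $E'$. That is true, by compactness of the sphere and a Dini-type argument, but the paper leaves it implicit.
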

\begin{proof}
	Condition (i) is obviously satisfied, hence we start by proving (ii). Let  $u=f+q\G_\lambda\in B^*_\rho$, i.e. 
	\begin{equation*}
		\|u\|_{\D}^2=\|\nabla f\|_2^2+\lambda\|f\|_2^2+\beta_\alpha(\lambda)q^2\leqslant \rho^2.
	\end{equation*}
	We observe that
	\begin{equation}
		\label{eq:u2-uD}
		\|u\|_{2}^2\leqslant 2\left(\|f\|_2^2+|q|^2\|\G_\lambda\|_2^2\right)\leqslant \frac{2}{\lambda}\|u\|_{\D}^2+\frac{2\|  \G_\lambda\|_2^2}{\beta_\alpha(\lambda)}\|u\|_{\D}^2\lesssim \|u\|_{\D}^2.
	\end{equation}
Therefore, by applying the Gagliardo-Nirenberg inequality in \cite[Equation (2.11)]{CFN-21} with $s=d\left(\frac{1}{2}-\frac{1}{p+1}\right)$
and using \eqref{eq:u2-uD}, we get
	\begin{equation}
	\label{eq:(ii) a-r}	
	\begin{split}
	S_{\lambda,\alpha}(u)&\geqslant \frac{1}{2}\|u\|_{\D}^2-\frac{C}{p+1}\|u\|_{\D}^{\frac{d}{2}(p-1)}\|u\|_2^{p+1-\frac{d}{2}(p-1)}\\
	&\geqslant \frac{1}{2}\|u\|_{\D}^2-\frac{C}{p+1}\|u\|_{\D}^{p+1}>0
	\end{split}
	\end{equation}
	for $\rho$ sufficiently small, entailing (ii).
	
	We focus now on (iii). Let $\D_{\text{fin}}$ be a finite dimensional subset of $\D_{\text{rad}}$, and let $\{u_1,\dots,u_n\}$ be an orthonormal basis of $\D_{\text{fin}}$ with respect to the inner product induced by $\|\cdot\|_\D$. This entails in particular that every element $g\in \D_{\text{fin}}$ can be written as $g=\sum_{i=1}^n t_i u_i$, with $t_i\in \R$ for every $i=1,\dots,n$, and $\|g\|_{\D}^2=\sum_{i=1}^n t_i^2$. Let us recall  that all the norms on the finite dimensional space $\D_{\text{fin}}$ are equivalent, in particular the two norms $\|\cdot\|_\D$ and $\|\cdot\|_{L^2(\R^d)}$: this implies that there exists $C_0>0$ such that  
    \begin{equation}
    \label{eq:norm-equiv}
\|g\|_{L^2(\R^d)}^2\geqslant C_0\|g\|_{\D}^2 \quad \forall\, g\in \D_{\text{fin}}.   
    \end{equation} 
    Consider now $g\in \D_{\text{fin}}$ with $\|g\|_\D=1$ and observe that 
    \begin{equation}
    \label{eq:lowboundp}
\int_{\R^d}|Rg|^{p+1}\,dx>KR^2\int_{\{|g|>K^{\frac{1}{p-1}}R^{-1}\}}|g|^2\,dx,
    \end{equation}
    with $K:=\frac{2(p+1)}{C_0}$. In particular, there exists $R^\star>0$ such that for every $R\geqslant R^\star$
    \begin{equation}
    \label{eq:bound2}
        \int_{\{|g|>K^{\frac{1}{p-1}}R^{-1}\}}|g|^2\,dx\geqslant \frac{1}{2}\|g\|_{L^2(\R^d)}^2\geqslant \frac{C_0}{2},
    \end{equation}
    where we have used \eqref{eq:norm-equiv} and the fact that $\|g\|_\D=1$.
    By combining
 \eqref{eq:lowboundp} and \eqref{eq:bound2},  there results that for every $R\geqslant R^\star$
    \begin{equation}
        \frac{1}{p+1}\|Rg\|_{L^{p+1}(\R^d)}^{p+1}\geqslant R^2.
    \end{equation}
    Therefore, we have that
    \begin{equation}
		\label{eq:s-om-lau}
S_{\lambda,\alpha}(g)\leqslant\frac{1}{2}R^2-R^2=-\frac{R^2}{2}<0\quad \text{ for $R\geqslant R^\star$}
	\end{equation} 
 	entailing that $\D_{\text{fin}}\cap\{S_{\lambda,\alpha}\geqslant 0\}$ is bounded and concluding the proof of (iii).
\end{proof}

In the next lemma, we prove that $S_{\lambda,\alpha}$ satisfies the Palais-Smale condition.

\begin{lemma}
	\label{lem:pal-smale}
	Let $\alpha\in\R$, $\lambda>\lambda_{\alpha}$. Then $S_{\lambda,\alpha}$ satisfies the Palais-Smale condition.
\end{lemma}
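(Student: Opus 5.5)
The Palais--Smale condition is understood on $\D_{\mathrm{rad}}$, the space where the mountain pass scheme is applied. Let $(u_n)\subset\D_{\mathrm{rad}}$ with $0<S_{\lambda,\alpha}(u_n)\leqslant C$ and $S'_{\lambda,\alpha}(u_n)\to0$ in $\D'$. The plan has three steps: boundedness, compactness of the nonlinear term, and strong convergence.

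\emph{Boundedness.} I would use the standard Ambrosetti--Rabinowitz combination
\begin{equation*}
S_{\lambda,\alpha}(u_n)-\tfrac{1}{p+1}\langle S'_{\lambda,\alpha}(u_n),u_n\rangle=\Big(\tfrac12-\tfrac1{p+1}\Big)\|u_n\|_{\D}^2 .
\end{equation*}
This identity uses that $\tfrac12(\mathcal Q_\alpha(u)+\lambda\|u\|_2^2)=\tfrac12\|u\|_{\D}^2$, which is the decomposition \eqref{eq:quad_form} with $\omega=\lambda$. The left-hand side is bounded by $C+o(1)\|u_n\|_{\D}$. It follows that $\|u_n\|_{\D}$ is bounded. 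Writing $u_n=f_n+q_n\G_\lambda$, this gives that $f_n$ is bounded in $H^1_{\mathrm{rad}}(\R^d)$ and $q_n$ is bounded in $\R$, because $\beta_\alpha(\lambda)>0$ for $\lambda>\lambda_\alpha$.

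\emph{Compactness.} Up to a subsequence, $f_n\rightharpoonup f$ in $H^1_{\mathrm{rad}}$ and $q_n\to q$. By the Strauss compact embedding, $H^1_{\mathrm{rad}}(\R^d)\hookrightarrow L^{r}(\R^d)$ is compact for $2<r<2^*$, with $2^*=\infty$ for $d=2$ and $2^*=6$ for $d=3$. Since $p+1\in(2,3)\subset(2,6)$ when $d=3$, we get $f_n\to f$ in $L^{p+1}$. The singular parts converge in every $L^r$ where $\G_\lambda$ lies, because $q_n\to q$ in $\R$. Hence $u_n\to u:=f+q\G_\lambda$ in $L^{p+1}(\R^d)$, using \eqref{lpgreen} and $p+1<3$ when $d=3$.

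\emph{Strong convergence.} I would write
\begin{equation*}
\langle S'(u_n)-S'(u),u_n-u\rangle=\|u_n-u\|_{\D}^2-\int(|u_n|^{p-1}u_n-|u|^{p-1}u)(u_n-u)\,dx .
\end{equation*}
The left-hand side tends to zero because $S'(u_n)\to0$, $u_n-u$ is bounded, and $u_n\rightharpoonup u$ weakly in $\D$. For the last point, $\D$ is a Hilbert space isometric to $H^1\times\R$, and $S'(u)\in\D'$ is fixed. The integral tends to zero by H\"older's inequality and the $L^{p+1}$ convergence. Hence $u_n\to u$ in $\D$.

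The main obstacle is ensuring compactness for the full $L^{p+1}$ norm, which combines the regular and singular components. This is handled by the splitting into a compactly embedded radial regular part and a finite-dimensional charge part. A further point is that $p+1$ must lie below the Green-integrability threshold $3$ in $d=3$, which holds in the Trace Regime.
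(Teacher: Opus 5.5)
Your proof is correct and essentially the paper's. The boundedness step uses the same Ambrosetti--Rabinowitz identity, and the compactness step uses the same Strauss radial embedding for $f_n$ together with $q_n\to q$. The only difference is the last step. The paper first passes to the limit in $\langle S'(u_n),v\rangle$ for every $v\in\D$ to obtain $S'(u)=0$, then combines $\|u\|_\D^2=\|u\|_{p+1}^{p+1}=\lim_n\|u_n\|_\D^2$ with weak convergence. You instead expand $\|u_n-u\|_\D^2$ through $\langle S'(u_n)-S'(u),u_n-u\rangle$. Your route needs only $L^{p+1}$ convergence and avoids the paper's three-exponent H\"older estimate against arbitrary test functions.
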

\begin{proof}
	Let $(u_n)_n$ be a sequence satisfying $0<S_{\lambda,\alpha}(u_n)\leqslant C$ and $S'_{\lambda,\alpha}(u_n)\to 0$ as $n\to+\infty$. Then
	\begin{equation}
		\label{eq:s'un-un}
		\langle S'_{\lambda,\alpha}(u_n), u_n\rangle_{\D'\times\D}=\|u_n\|_{\D}^2-\|u_n\|_{p+1}^{p+1},
	\end{equation}
	from which
	\begin{equation*}
		S_{\lambda,\alpha}(u_n)=\frac{1}{2}\|u_n\|_{\D}^2-\frac{1}{p+1}\|u_n\|_{p+1}^{p+1}=\frac{1}{p+1}\langle S'_{\lambda,\alpha}(u_n), u_n\rangle_{\D'\times\D}+\frac{p-1}{2p+2}\|u_n\|_{\D}^{2}.
	\end{equation*}
	In particular, since $\|S'_{\lambda,\alpha}(u_n)\|_{\D'}\to 0$ it follows that
	\begin{equation}
		\begin{split}
			\frac{p-1}{2p+2}\|u_n\|_{\D}^2&=S_{\lambda,\alpha}(u_n)-\frac{1}{p+1}\langle S'_{\lambda,\alpha}(u_n), u_n\rangle_{\D'\times\D}\\
			&\leqslant \left|S_{\lambda,\alpha}(u_n)\right|+\frac{1}{p+1}\left|S'_{\lambda,\alpha}(u_n), u_n\rangle_{\D'\times\D}\right|\\
			&\leqslant \left|S_{\lambda,\alpha}(u_n)\right|+\frac{1}{p+1}\|S'_{\lambda,\alpha}(u_n)\|_{\D'}\|u_n\|_{\D}\\
			&\leqslant C+K\|u_n\|_{\D},
		\end{split}
	\end{equation}
	thus the sequence $(u_n)_n$ is bounded in $\D$. This implies  that, given the representation $u_n=f_n+q_n\G_\lambda$, then  $f_n$ and $q_n$ are bounded in $H^1(\R^N)$ and in $\R$ respectively. Therefore, there exists $f\in H^1(\R^N)$ and $q\in \R$ such that, up to subsequences, $f_n\deb f$ weakly in $H^1(\R^d)$ and $q_n\to q$ in $\R$. Since $f_n$ are radially symmetric, then by \cite[Proposition 1.7.1]{cazenave} $f_n\to f$ strongly in $L^{r}(\R^d)$ for every $r\in
	(2,+\infty)$ if $d=2$ and $r\in(2,3)$ if $d=3$, in particular $f_n\to f$ strongly in $L^{p+1}(\R^d)$. Writing $u:=f+q\G_\lambda$, it follows that $u_n\to u$ strongly in $L^{r}(\R^d)$ for every $r\in(2,+\infty)$ if $d=2$ and $r\in(2,3)$ if $d=3$.
	
	On the one hand, by \eqref{eq:s'un-un}
	\begin{equation}\label{eq:normu-leq-up}
		\begin{split}
\|u\|_\D^2\leqslant \liminf_n \|u_n\|_{\D}^2\!=\!\liminf_n\left\{\|u_n\|_{p+1}^{p+1}+\langle S'_{\lambda,\alpha}(u_n), u_n\rangle_{\D'\times\D} \right\}\!=\!\|u\|_{p+1}^{p+1}.
\end{split}
\end{equation}
	On the other hand, for every $v=\varphi+Q\G_{\lambda}\in \D$,
	\begin{equation}
		\begin{split}
			\langle S'_{\lambda,\alpha}(u)&-S'_{\lambda,\alpha}(u_n), v\rangle_{\D'\times\D}=\int_{\R^N}\left[\left(\nabla f-\nabla f_n\right)\cdot \nabla \varphi+\lambda\left(f-f_n\right)\cdot \varphi\right]\,dx\\
			&+\beta_{\alpha}(\lambda)(q-q_n)Q-\int_{\R^N}\left(|u|^{p-1}u-|u_n|^{p-1}u_n\right)v\,dx.
		\end{split}
	\end{equation}
	The first two terms converge to $0$ since $f_n\deb f$ weakly in $H^1(\R^N)$ and $q_n\to q$ in $\R$. The last term can be estimated for example as follows:
	\begin{equation}
		\begin{split}
\left|\int_{\R^N}\left(|u|^{p-1}u-|u_n|^{p-1}u_n\right)v\,dx\right|&\leqslant p\int_{\R^n}\sup\left\{|u|, |u_n|\right\}^{p-1}|u-u_n||v|\,dx\\
			&\leqslant 2p\|v\|_{q_1}\|u-u_n\|_{q_2}\|u_n\|_{(p-1)q_3}^{p-1},
		\end{split}
	\end{equation}
	where the $q_i$ satisfy $\sum_i \frac{1}{q_i}=1$. It is straightforward that the right-hand side vanishes when $d=2$, independently on the choice of $q_i>2$. If instead $d=3$, then there are restrictions on the integrability of functions belonging to $\D$, in particular $2<q_1,q_2<3$ and $\frac{2}{p-1}<q_3<\frac{3}{p-1}$, that can be rewritten as $\frac{1}{3}<\frac{1}{q_1},\frac{1}{q_2}<\frac{1}{2}$ and $\frac{p-1}{3}<\frac{1}{q_3}<\frac{p-1}{2}$: an admissible choice is given by $\frac{1}{q_1}=\frac{1}{q_2}:=\frac{1}{3}+\frac{\eps(p)}{2}$ and $\frac{1}{q_3}:=\frac{1}{3}-\eps(p)$, with $\eps(p):=\frac{1}{12}p^2-\frac{7}{12}p+\frac{5}{6}$.
	
	Therefore, since $\langle S'_{\lambda,\alpha}(u)-S'_{\lambda,\alpha}(u_n), v\rangle_{\D'\times\D}\to 0$ for every $v\in \D$, it holds that $S_{\lambda,\alpha}'(u)=0$ and $\|u\|_{\D}^2=\|u\|_{p+1}^{p+1}$. This, together with \eqref{eq:normu-leq-up}, entails that $u_n\to u$ strongly in $\D$, concluding the proof.  
\end{proof}

We can now prove the existence of infinitely many singular, critical points.

\begin{proof}[Proof of Theorem \ref{th:main_critical_points}]
 By virtue of Lemmas \ref{lem:mount-geom}-\ref{lem:pal-smale} and the fact that property (iv) holds (i.e.~$S_{\lambda,\alpha}$ is an even functional), one can apply Proposition  \ref{prop:inf-crit-j} to get the existence of infinitely many radial critical points to \eqref{eq:action}, which are also singular in view of Lemma \ref{le:sing_crit}.
\end{proof}

\subsection{Structure of positive solutions and existence of nodal solutions}\label{sec:strunod}
We prove here Theorems \ref{th:structure} and \ref{th:nodal}, respectively on the structure of  positive solutions and the existence of nodal solutions to \eqref{eq:singular_elliptic} in dimension $d=2$. We preliminary collect some useful material. 

Let us start with the following rigidity result, which guarantees that positive solutions to \eqref{eq:singular_elliptic} are radially symmetric.

 \begin{proposition}\label{pr:radial}
	Fix $d\geqslant 2$, $\sigma=\pm 1$, $\lambda>0$, $p>1$, and let $u$ be a positive solution to \eqref{eq:singular_elliptic} with $|u(x)|\to 0$ as $|x|\to\infty$. Then $u$ is radially symmetric around the origin (up to a suitable translation if $u$ is regular).
\end{proposition}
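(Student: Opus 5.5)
The plan is to use the moving plane method on the punctured space, following Gidas--Ni--Nirenberg and Caffarelli--Gidas--Spruck for singular solutions. We split according to whether $u$ is regular or singular.

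\emph{Regular case.} Here $u$ extends to a positive $\mathcal{C}^2$ solution on $\R^d$ vanishing at infinity. The nonlinearity $t\mapsto \lambda t-\sigma t^p$ is $\mathcal{C}^1$ with derivative $\lambda-p\sigma t^{p-1}\to\lambda>0$ as $t\to 0^+$. The classical result of Gidas--Ni--Nirenberg for the whole space therefore gives radial symmetry about some point $x_0$, together with radial monotonicity. No decay rate beyond $u\to 0$ is needed, since $f'(0)=-\lambda<0$, in the form proved by Li--Ni.

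\emph{Singular case.} We take $\limsup_{x\to0}u=+\infty$ and run moving planes in each direction $e\in\mathbb S^{d-1}$. For $\mu>0$, set $\Sigma_\mu=\{x\cdot e>\mu\}$, let $x^\mu$ be the reflection of $x$ across $\{x\cdot e=\mu\}$, and put $w_\mu(x)=u(x^\mu)-u(x)$ on $\Sigma_\mu$. The reflected point $0^\mu$ lies in $\Sigma_\mu$, where $u(x^\mu)$ blows up, so $w_\mu$ is defined on $\Sigma_\mu\setminus\{0^\mu\}$ and tends to $+\infty$ at $0^\mu$ (or at least is $\geqslant 0$ near it, by the positivity and blow-up of $u$ there). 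The function $w_\mu$ satisfies a linear equation
\begin{equation*}
-\Delta w_\mu+c_\mu(x)w_\mu=0,\qquad c_\mu=\lambda-\sigma\,\frac{u^p(x^\mu)-u^p(x)}{u(x^\mu)-u(x)}.
\end{equation*}
The proof then proceeds in three steps.

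\textbf{Step 1 (start).} For $\mu$ large we show $w_\mu\geqslant0$. At a negative minimum of $w_\mu$ we have $u(x)>u(x^\mu)$, and both values are small because $|x|$ is large (exponential decay, as shown in Step 1 of Proposition \ref{prop:equivalence}). Hence $c_\mu\geqslant\lambda/2>0$ at that point, contradicting the maximum principle. Near $0^\mu$ the blow-up keeps $w_\mu$ positive, and at infinity $w_\mu\to0$.

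\textbf{Step 2 (continuation).} Let $\mu_0=\inf\{\mu>0: w_\nu\geqslant0 \ \forall\nu\geqslant\mu\}$. We claim $\mu_0=0$. If $\mu_0>0$, the strong maximum principle gives $w_{\mu_0}>0$ in $\Sigma_{\mu_0}\setminus\{0^{\mu_0}\}$, since $w_{\mu_0}\not\equiv0$ because of the singularity. Compactness then gives $w_\mu\geqslant0$ for $\mu$ slightly less than $\mu_0$. Negative minima are excluded far out by the decay argument. In bounded regions they are excluded by a narrow-domain/small-measure maximum principle, and near $0^\mu$ by blow-up.

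\textbf{Step 3 (conclusion).} This yields $u(x^0)\geqslant u(x)$ for $x\cdot e>0$, for every direction $e$. Applying this to $e$ and $-e$ gives symmetry with respect to every hyperplane through the origin, hence radial symmetry about $0$.

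The main obstacle is Step 2 near the singular point $0^\mu$ and the fact that $c_\mu$ may be unbounded there. The blow-up is not uniform in the direction unless we know $u(x)\to+\infty$, rather than merely $\limsup$. I would first prove $\lim_{x\to0}u=+\infty$ using the Brezis--Lions representation (Proposition \ref{lem:Brezis-Lions}, applicable since $u>0$ in the source case). This gives $u=f+q\mathcal G_\lambda$ with $q>0$ and $f=o(\mathcal G_\lambda)$, as in Proposition \ref{prop:equivalence}, whose argument only needs $L^1_{\mathrm{loc}}$ integrability of $u^p$. If $u^p\notin L^1_{\mathrm{loc}}$, as can happen in $d\geqslant3$ for large $p$, I would instead rely on the Caffarelli--Gidas--Spruck result that positive singular solutions satisfy $u\to\infty$ at an isolated nonremovable singularity. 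With uniform blow-up in hand, a small ball around $0^\mu$ is always a region where $w_\mu>0$, and the remaining argument takes place away from it, where $c_\mu$ is bounded.
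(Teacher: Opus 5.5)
Your moving-plane scheme is the route the paper takes. In the regular case you use Gidas--Ni--Nirenberg (or Li--Ni). In the singular case you start near infinity as in GNN, using only $u\to0$ and $\lambda>0$, and then continue as in V\'eron's Theorem 2.4. There the reflected singular point $0^\mu\in\Sigma_\mu$ makes $w_{\mu_0}\not\equiv0$ and forces $\mu_0=0$ in every direction. Once uniform blow-up at the origin is available, your Steps 1--3 are sound.

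The step that fails as written is your derivation of $\lim_{x\to0}u=+\infty$.

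\emph{The bootstrap needs more than $u^p\in L^1_{\mathrm{loc}}$.} The argument of Proposition \ref{prop:equivalence} does not ``only need $u^p\in L^1_{\mathrm{loc}}$''. Its bootstrap, and the conclusion $u=f+q\mathcal G_\lambda$ with $f=o(\mathcal G_\lambda)$, use $d\in\{2,3\}$ and $p<\frac{d}{d-2}$.

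\emph{Your case split is misplaced in the source case.} There $u^p\in L^1_{\mathrm{loc}}$ holds for every positive solution: Step 2 of the appendix gives $\Delta u\in L^1_{\mathrm{loc}}(\R^d)$, and $u^p=\lambda u-\Delta u$. So your alternative ``if $u^p\notin L^1_{\mathrm{loc}}$, as in $d\geqslant3$ for large $p$'' never occurs there. Yet for $d\geqslant3$ and $p\geqslant\frac{d}{d-2}$ the singular solutions behave like $|x|^{-2/(p-1)}$, with a logarithmic correction at $p=\frac{d}{d-2}$. They are $o(\mathcal G_\lambda)$ and have $q=0$, so your claim $q>0$ is false.

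\emph{The absorption case is not covered.} For $\sigma=-1$ you give no valid argument. Hypothesis (ii) of Proposition \ref{lem:Brezis-Lions} needs $u^p\in L^1_{\mathrm{loc}}$ near $0$. This fails exactly for the strong singularities $u\sim|x|^{-2/(p-1)}$, which exist for all $p>1$ when $d=2$ and for $p<\frac{d}{d-2}$ when $d\geqslant3$. Proposition \ref{prop:equivalence} requires the two-sided bound \eqref{eq:below-green}. Caffarelli--Gidas--Spruck is a source-case result, and it assumes at most critical growth.

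The step is in fact unnecessary, for two reasons.

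\emph{It is a hypothesis in the paper.} The introduction defines ``singular'' as $|u(x)|\to\infty$ as $x\to0$. Uniform blow-up is therefore part of the hypothesis, as in the cited V\'eron result.

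\emph{Positivity alone is enough.} Even with only $\limsup_{x\to0}u=+\infty$, the argument goes through. For $\mu>0$, $u$ is bounded on $\Sigma_\mu$, so $w_\mu\geqslant-\sup_{\Sigma_\mu}u$. On $\{w_\mu<0\}$ both $u(x)$ and $u(x^\mu)$ lie in $(0,\sup_{\Sigma_\mu}u]$, so $|c_\mu|\leqslant M$ there. By Kato's inequality, $\Delta w_\mu^-\geqslant c_\mu w_\mu^-\geqslant -Mw_\mu^-$ in $\Sigma_\mu\setminus\{0^\mu\}$. This is a bounded subsolution, and an isolated point is removable for bounded subsolutions when $d\geqslant2$. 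All your maximum-principle steps then run with a bounded coefficient. The singularity enters only to get $w_{\mu_0}\not\equiv0$, which requires just that $u$ is unbounded near $0$.
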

In the case of regular solutions, the above result was obtained (for a large class of local nonlinearities) in the seminal paper by Gidas-Ni-Niremberg \cite{GNN}, using the technique of ``moving planes''. The extension to the singular case can be found in \cite[Theorem 2.4]{Veron-book}. Strictly speaking, the result in \cite{Veron-book} deals with elliptic equations on a ball with Dirichlet boundary conditions. Still, the first step ``near infinity'' of the moving plane procedure is as in \cite{GNN}, then the proof proceeds as in \cite{Veron-book}.

Next, we present a crucial uniqueness result for equation \eqref{eq:point_elliptic1} in the source case, obtained in \cite{fukaya-uniqueness}, which also proves the non-degeneracy of the action ground state.

 \begin{proposition}\label{pr:uniqueness}
Fix $d=2$, $\sigma=1$, $\alpha\in\overline{\R}$, $\lambda>\lambda_{\alpha}$ and $p>1$. There exists exactly one positive, radial solution $u$ to \eqref{eq:point_elliptic1}. Moreover,
\begin{itemize}
\item[(i)] if $\alpha\in\R$, then $\mathsf{G}_{\lambda,\alpha}=\{\pm u\}$;
\item[(ii)] $\mathsf{G}_{\lambda,\infty}=\{\pm u(\cdot-x_0)\,|\,x_0\in\R^2\}$.
\end{itemize}
\end{proposition}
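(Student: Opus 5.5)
The plan is to reduce everything to a uniqueness statement for a singular radial ODE, parametrized by the charge $q$, and then get (i)--(ii) from the known qualitative properties of ground states. I first treat $\alpha\in\R$. \emph{Existence} comes from the results recalled in Section \ref{sec:variational} \cite{ABCT-2d,FGI}: for $\lambda>\lambda_\alpha$, $\mathsf{G}_{\lambda,\alpha}\neq\emptyset$, and every ground state is, up to sign, positive and radially decreasing. So at least one positive radial solution exists, and (i) follows once I show there is \emph{at most one} positive radial solution $u$: then every ground state is $\pm u$, and conversely $u$ is itself a ground state.

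For uniqueness, let $u$ be a positive radial solution of \eqref{eq:point_elliptic1}. By Theorem \ref{th:main_equivalence}, $u=f+q\G_\lambda$ with $f$ continuous, and $\beta_\alpha(\lambda)q=f(0)$. Moreover $q\neq0$ by Lemma \ref{le:sing_crit}, and $q>0$ by positivity and \eqref{eq:qlim}. In radial variables, using \eqref{eq:asy}, $u$ solves
\begin{equation*}
-u''-\tfrac1r u'+\lambda u=u^p,\qquad u(r)=q\,\G_\lambda(r)+\beta_\alpha(\lambda)\,q+o(1)\quad (r\to0).
\end{equation*}
First I show this singular initial value problem is well posed: for each $q>0$ there is a unique local solution $u(\cdot\,;q)$, depending smoothly on $q$. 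This follows from a contraction argument for $f$ in the integral equation $f=\G_\lambda\ast\big(|f+q\G_\lambda|^{p-1}(f+q\G_\lambda)\big)$ on a small ball. The extra term $\beta_\alpha(\lambda)q$ is fixed by the boundary condition, and $\G_\lambda^p$ is integrable in $d=2$. Uniqueness thus reduces to showing that at most one value $q>0$ produces a solution that stays positive and decays at infinity.

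This last step is the main obstacle, because the problem has no scaling invariance: $\alpha$ fixes a length scale, so Kwong's argument cannot be transplanted verbatim. I would try a Kwong--Coffman/Yanagida-type argument on the variation $w=\partial_q u(\cdot\,;q)$. It solves the linearized equation $-w''-\frac1r w'+\lambda w=pu^{p-1}w$ with $w\sim \G_\lambda+\beta_\alpha(\lambda)$ near $0$. The goal is to show, by Sturm comparison with suitable test functions such as $ru'$ and $u$, that for any positive decaying solution $w$ has exactly one zero and is unbounded at infinity. This is exactly nondegeneracy of $u$ in the radial class: the kernel of the linearized operator $-\Delta_\alpha+\lambda-pu^{p-1}$ is trivial there. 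A standard continuation/shooting argument (the set of good $q$ is open and closed in a connected set, or a monotone separation of solutions) then gives at most one $q$. The Wronskian computations require a $2$-dimensional choice of comparison functions that absorbs the logarithmic singularity, which is where working in $d=2$ is used.

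For (ii), $\alpha=\infty$: positive solutions in $H^1(\R^2)$ are radial about some point $x_0$ by Proposition \ref{pr:radial}, and they are unique up to translation by \cite{Kwong}. Ground states exist and are positive up to sign, by classical results \cite{BerLio}. Hence $\mathsf{G}_{\lambda,\infty}=\{\pm u(\cdot-x_0)\,:\,x_0\in\R^2\}$, where $u$ is the unique positive radial solution centered at the origin.
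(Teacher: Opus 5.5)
Your reduction steps are sound. For $\alpha\in\R$, part (i) does follow from ``at most one positive radial solution'' together with the properties of ground states recalled from \cite{ABCT-2d,FGI}. The radial problem is indeed a one-parameter shooting problem in the charge $q>0$: the regular part is pinned by $f(0)=\beta_\alpha(\lambda)q$, and the Volterra fixed point for $f$ closes because $r|\ln r|^{p-1}$ is integrable at $0$. Part (ii) is the classical Kwong/Gidas--Ni--Nirenberg result. Note that the paper does not prove this proposition at all: it imports it from \cite{fukaya-uniqueness} for $\alpha\in\R$ and from \cite{Kwong} for $\alpha=\infty$. So the only new content you would supply is the singular uniqueness, and that is exactly what is missing.

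The step that carries the whole proof is only stated as a goal. That step is: at every positive decaying solution, $w=\partial_q u$ has exactly one zero and is unbounded, so $w\to-\infty$ with a sign independent of the solution. The tools you name do not transfer. Kwong's Sturm/Wronskian arguments use the dilation-generated functions $v=ru'$ and $v+\frac{2}{p-1}u$. With $L=-\partial_r^2-\frac1r\partial_r+\lambda-pu^{p-1}$, these satisfy $Lv=2u^p-2\lambda u$ and $L\big(v+\frac{2}{p-1}u\big)=-2\lambda u$. The identities close because everything is smooth at the origin, so the boundary terms $r(wv'-w'v)$ vanish at $r=0$. The point interaction breaks dilation invariance. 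Here $ru'\to-\frac{q}{2\pi}$ and $rw'\to-\frac{1}{2\pi}$ as $r\to0^+$, so $v$ violates the $\alpha$-boundary condition and $r(wv'-w'v)\to-\frac{q}{4\pi^2}\neq0$. By contrast, $r(wu'-w'u)\to0$, since $u$ and $w$ satisfy the same boundary condition.

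Every identity used to exclude a second zero of $w$, or a decaying $w$, therefore picks up a $q$-dependent boundary contribution whose sign must be controlled. Handling this is the new input of \cite{fukaya-uniqueness}, whose arguments, as the paper stresses, rely on $d=2$. Nothing you actually write depends on the dimension: the ``2-dimensional choice of comparison functions'' is never specified. As written, your outline would equally ``prove'' uniqueness for $d=3$, $1<p<2$, which the paper notes is open.

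Finally, the concluding step cannot be an open--closed argument on the set of admissible $q$, since that set is not all of $(0,\infty)$. What is needed is the following. At each admissible $q_*$, nearby $q>q_*$ give solutions with a zero and nearby $q<q_*$ give positive non-decaying ones. Two consecutive admissible values would then split $(q_1,q_2)$ into two disjoint nonempty open sets, a contradiction. This again rests entirely on the missing sign information for $w$ at infinity.
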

In the regular case ($\alpha=\infty$) the above result is classical \cite{Kwong}, and holds in any dimension (for suitable ranges of $p$). As already mentioned, the proof in \cite{fukaya-uniqueness} for the singular regime ($\alpha\in\R$) instead strongly relies on the fact that $d=2$.

Finally, we state a simple lemma showing that any radial, non-zero solution to \eqref{eq:singular_elliptic} that vanishes at some point away from the origin must be nodal.

\begin{lemma}\label{le:zin}
	Fix $d\geqslant 2$, $\sigma=\pm 1$, $\lambda>0$, $p>1$. Let $u\not\equiv 0$ be a radial solution to \eqref{eq:singular_elliptic}, with $u(x_0)=0$ for some $x_0\neq 0$. Then $u$ is nodal, namely $u^+,\,u^-\not\equiv 0$.
\end{lemma}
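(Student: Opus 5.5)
We argue by contradiction and reduce everything to an ODE statement in the radial variable. Write $u(x)=\tilde u(|x|)$, so that $\tilde u\in\mathcal C^2(0,\infty)$ solves
\begin{equation*}
-\tilde u''-\tfrac{d-1}{r}\tilde u'+\lambda\tilde u=\sigma|\tilde u|^{p-1}\tilde u,\qquad r>0,
\end{equation*}
and set $r_0:=|x_0|>0$, so that $\tilde u(r_0)=0$. Assume that $u$ is not nodal. Replacing $u$ by $-u$ if needed (the equation is odd), we may assume $u\geqslant 0$ on $\R^d\setminus\{0\}$, that is $\tilde u\geqslant 0$ on $(0,\infty)$.

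The first step is to show that $\tilde u'(r_0)=0$. Since $\tilde u\geqslant0$ and $\tilde u(r_0)=0$, the point $r_0$ is an interior minimum of the $\mathcal C^1$ function $\tilde u$ on $(0,\infty)$. Hence $\tilde u'(r_0)=0$.

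The second step uses uniqueness for the Cauchy problem at $r_0$. Because $r_0>0$, the coefficient $\frac{d-1}{r}$ is smooth near $r_0$. The nonlinearity $s\mapsto|s|^{p-1}s$ is locally Lipschitz, since $p>1$. The ODE is therefore regular near $r_0$, and the Cauchy–Lipschitz theorem gives local uniqueness of solutions with data $\tilde u(r_0)=\tilde u'(r_0)=0$. The zero function solves this problem, so $\tilde u\equiv 0$ in a neighbourhood of $r_0$.

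The third step propagates the vanishing to all of $(0,\infty)$. Consider the set $\{r>0:\tilde u(r)=\tilde u'(r)=0\}$. It is closed in $(0,\infty)$ by continuity, and it is open by the uniqueness argument of the second step applied at each of its points. It is also nonempty, since it contains $r_0$. By connectedness of $(0,\infty)$ it is the whole half-line, so $\tilde u\equiv0$, i.e.\ $u\equiv 0$, contradicting $u\not\equiv 0$.

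There is no real obstacle. The only point that needs care is that the argument must stay away from $r=0$, where the coefficient $\frac{d-1}{r}$ is singular and $u$ may blow up; this is exactly why the hypothesis $x_0\neq0$ is needed. One should also check that no regularity of the nonlinearity beyond local Lipschitz continuity is used, which holds for every $p>1$. Unlike the Lyapunov argument in Lemma \ref{le:sing_crit}, the sign of $\sigma$ plays no role here.
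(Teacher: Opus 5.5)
Your proof is correct and follows essentially the paper's argument: in both, the key step is uniqueness for the Cauchy problem at $r_0>0$, where the equation is regular, with data $\tilde u(r_0)=\tilde u'(r_0)=0$. The paper argues directly (uniqueness forces $\tilde u'(r_0)\neq0$, hence a sign change near $r_0$). You instead argue by contradiction via the interior minimum, and you spell out the open--closed propagation of uniqueness to all of $(0,\infty)$, which the paper leaves implicit in ``by uniqueness''.
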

\begin{proof}
	Let us write $u(x)=\tilde{u}(|x|)$ for some non-zero function $\tilde{u}:(0,+\infty)\to\R$. Set also $r_0:=|x_0|$, so that $\tilde{u}(r_0)=0$. If $\tilde{u}'(r_0)=0$, then $u$ would satisfy
	\begin{equation*}
	\begin{cases}
		-\tilde{u}''(r)-\frac{d-1}{r}\tilde{u}'(r)+\lambda\tilde{u}(r)=\sigma|\tilde{u}(r)|^{p-1}\tilde{u}(r),\quad r>0\\
		\tilde{u}(r_0)=\tilde{u}'(r_0)=0,
	\end{cases}
	\end{equation*}
	and by uniqueness $\tilde{u}(r)=0$ for all $r>0$, a contradiction. Thus $\tilde{u}'(r_0)\neq 0$, and as a consequences $\tilde{u}$ changes sign in a neighborhood of $r_0$.
\end{proof}

We are now able to prove Theorems \ref{th:structure} and \ref{th:nodal}.
\begin{proof}[Proof of Theorem \ref{th:structure}]
In view of Proposition \ref{pr:radial}, $u$ must be radial. Moreover, by Theorem \ref{th:main_equivalence}, $u\in H_{\alpha}^2(\R^2)$ for a suitable choice of $\alpha\in\overline{\R}$, and it satisfies \eqref{eq:point_elliptic1} as an identity in $L^2(\R^2)$. Let us show that $\lambda>\lambda_{\alpha}$. Since $\lambda_{\infty}=0$, we only need to consider the case $\alpha\in\R$. Suppose that $\lambda\leqslant \lambda_{\alpha}$: testing \eqref{eq:point_elliptic1} with $\mathcal{G}_{\lambda_{\alpha}}$, and recalling that $(-\Delta_{\alpha}+\lambda_{\alpha})\mathcal{G}_{\lambda_{\alpha}}=0$, we get
\begin{equation*}
0\geqslant(\lambda-\lambda_{\alpha})\big\langle u,\mathcal{G}_{\lambda_{\alpha}}\big\rangle=\big\langle(-\Delta_{\alpha}+\lambda)u,\mathcal{G}_{\lambda_{\alpha}}\big\rangle=\big\langle(|u|^{p-1}u,\mathcal{G}_{\lambda_{\alpha}}\big\rangle>0,
\end{equation*}
which is a contradiction. Since $\lambda>\lambda_{\alpha}$, the thesis follows from Proposition \ref{pr:uniqueness} and the aforementioned fact, proved in \cite{ABCT-2d}, that ground states of $S_{\lambda,\alpha}$, $\alpha\in\R$, are singular.
\end{proof}

\begin{remark}\label{rem:crucial}
In the above proof, it was crucial that in the source case (as opposed to the absorption one) no a priori upper bound on the solution $u$ is required in the equivalence result provided by Theorem \ref{th:main_equivalence}.
\end{remark}

\begin{proof}[Proof of Theorem \ref{th:nodal}]
In view of \eqref{eq:beta_alfa} and the relation $\beta_{\alpha}(\lambda_{\alpha})=0$, we deduce that $\inf_{\alpha\in\R}\lambda_{\alpha}=0$. In particular, we can choose $\alpha\in\R$ such that $\lambda>\lambda_{\alpha}$. Theorem \ref{th:main_critical_points} then provides the existence of infinitely many radial, singular critical points of $S_{\lambda,\alpha}$, which are solutions to \eqref{eq:point_elliptic1} and  hence also to \eqref{eq:singular_elliptic} in view of Theorem \ref{th:main_equivalence}. Since by Proposition \ref{pr:uniqueness} there exist a unique positive, radial solution $u_{\alpha}$ to \eqref{eq:point_elliptic1}, all these critical points but $\pm u_{\alpha}$ vanish at some point, whence are nodal in view of Lemma \ref{le:zin}.
\end{proof}

\section{Further extensions and remarks}\label{sec:finalremarks}

\subsection{Other nonlinearities and generalizations}
In our equivalence result, the pure-power nonlinearity $|u|^{p-1}u$ can be replaced, with no substantial changes to the proof, by more general nonlinearities (possibly non-monotone and of indefinite sign). This leads to elliptic problems that still admit singular solutions, which can be characterized in terms of Schr\"odinger operators with $\delta$-type (point) interactions. In this direction, we mention the recent paper \cite{PoWa}, where the existence of a non-trivial solution to $-\Delta_{\alpha}u=g(u)$, for fairly general nonlinearities, is proved via variational methods.

Once the correspondence between \eqref{eq:singular_elliptic} and \eqref{eq:point_elliptic1} is clarified in the model case of the euclidean Laplacian with a single point interaction, several extensions to more general settings become natural. Although we do not pursue them here, we mention the following open directions:
\begin{itemize}
 \item[-] working on bounded domains or on Riemannian manifolds;
 \item[-] considering elliptic operators with variable coefficients or pseudodifferential operators (e.g.\ the fractional Laplacian);
 \item[-] allowing singularities supported on multiple points (rather than a single one), or on lower-dimensional sets such as curves or surfaces.
\end{itemize}

\subsection{The absorption case}\label{ssa}
In the absorption case, when $d=3$ and $p>1$, it is known that there are no regular ground states, i.e.~positive regular solutions to \eqref{eq:singular_elliptic} vanishing at infinity (see \cite{NS1_86}). On the other hand, when $d=3$ and $1<p<2$, there exist positive solutions to \eqref{eq:point_elliptic1} (see \cite{CaCle94}), and then also to \eqref{eq:singular_elliptic} by our equivalence result. More generally, in the regime covered by Theorem~\ref{th:main_equivalence}, singular solutions to \eqref{eq:singular_elliptic} with $\sigma=-1$ do always exist; see, e.g., \cite{Veron-book,CirsteaDu07}. From the operator-theoretic perspective emphasized in the present work, this phenomenon is ultimately related to the simultaneous presence of a defocusing nonlinearity and, at the linear level, an attractive point interaction. Recall indeed that the operator $-\Delta_\alpha$ has a negative eigenvalue for $\alpha<0$. We expect a similar mechanism to hold also in dimension $d=2$, where a negative eigenvalue of $-\Delta_\alpha$ exists for every $\alpha\in\R$. For information about positive and nodal singular solutions to \eqref{eq:singular_elliptic} with $\sigma=-1$ in $\mathbb R^2$, see \cite{ChenMatanoVeron89, CirsteaDu07}.

\subsection{Complex solutions}\label{re:complesso}
\sloppy
In quantum mechanics, Schr\"odinger operators with point interactions are naturally realized on $L^2(\R^d;\C)$. Accordingly, one may also consider complex-valued solutions to equation \eqref{eq:singular_elliptic}, and the equivalence results provided by Theorems~\ref{th:main_equivalence}-\ref{th:equivalence-weak} still hold. Indeed, in view of the gauge invariance of the nonlinear term, it suffices to impose condition \eqref{eq:below-green} on both the real and the imaginary parts of $u$. Interest in complex-valued solutions already arises in the regular setting, in particular in the study of vortices for nonlinear Schr\"odinger equations and related models, such as the Gross--Pitaevskii and Ginzburg--Landau equations; see, e.g., \cite{CEQ},\cite[Chapter 15]{fibich} and references therin. For instance, when $d=2$, \eqref{eq:singular_elliptic} admits regular solutions of the form $f(r)e^{\ii n\theta}$, where $(r,\theta)$ are polar coordinates and $n\in\Z$ is the vortex degree. The study of non-constant phase solutions, including vortex-type profiles, in the singular setting is an interesting direction for future research. The complex-valued version of Theorem~\ref{th:main_equivalence} provides an operator-theoretic framework for such an analysis.

\subsection{Time-dependent problems}\label{re:timedependent} 
Singular solutions to semilinear heat equations have been widely studied, see e.g.~\cite{Oswald88, Hirata14} and the references therein. On the other hand, semilinear heat equations with point interactions have attracted attention in the literature only recently, see Section \ref{re:heat} below for an explicit example. Beyond the heat flow, other dynamics can be considered. For the semilinear Schr\"odinger equation with point interactions, we refer to \cite{MOS-hartree, CFN-21, FGI, CFN-23, GeoRas-2D}. The analysis of nonlinear wave flow with ``zero range'' interactions is instead, to the best of our knowledge, completely open.

\subsection{Other approaches}\label{re:heat}
The operator-theoretic characterization of the elliptic equation \eqref{eq:singular_elliptic} provided by Theorems~\ref{th:main_equivalence}-\ref{th:equivalence-weak} makes available a wide range of tools. For example, the self-adjointness of $-\Delta_\alpha$ allows one to define the heat semigroup $\{e^{t\Delta_\alpha}\}_{t>0}$ and, in turn, by means of parabolic estimates (see, e.g., \cite{Bargera}), the flow of the singular semilinear heat equation
\begin{equation}\label{eq:nlh}
v_t-\Delta_\alpha v+\lambda v = \sigma|v|^{p-1}v.
\end{equation}
Stationary solutions of this flow are precisely solutions of \eqref{eq:point_elliptic1}. Under additional compactness and dissipativity assumptions, one may recover stationary solutions as $\omega$-limit points of parabolic trajectories. In the regular setting, the above connection has been used in \cite{CMT-2000}  to construct multiple nodal solutions.

\subsection{Higher dimensions}\label{re:dprime}
As already noticed, in dimensions $d\geqslant4$, analogues of Theorems~\ref{th:main_equivalence}-\ref{th:equivalence-weak} cannot hold within the same $L^2$ self-adjoint extension framework, since the operator
\begin{equation*}
S:=-\Delta\big|_{\mathcal{C}_0^{\infty}(\R^d\setminus\{0\})}
\end{equation*}
is essentially self-adjoint on $L^2(\R^d)$ for $d\geqslant4$, meaning that nontrivial $L^2$-based point interactions exist only in dimensions $d=1,2,3$.
A related correspondence in higher dimension may still be possible, but it would have to be formulated in a larger setting (using e.g.~weighted Sobolev spaces), where Green-type singularities are admitted as states. See also \cite{DerezinskiPoint} for a different point of view on the treatment of point interactions in higher dimension.

\appendix

\section{Proof of the Brezis-Lions Lemma}\label{app}
We prove here for completeness Lemma \ref{lem:Brezis-Lions} in the form useful for our purposes and following the same lines as in the original paper \cite{Brezis-Lions-Lemma}.\par\smallskip
\emph{Step 1:~$u\in L^{1}_{\mathrm{loc}}(\R^{d})$.}
Denote by $\overbar{u}(r)$ the spherical mean of $u$ on $\partial B_{r}$, namely
\begin{equation*}
\overbar{u}(r):=\frac{1}{|\partial B_{r}|}\int_{\partial B_{r}} u(x)\,d\sigma(x),
\end{equation*}
where $d\sigma$ is the $(d-1)$-dimensional Hausdorff measure. Since $u\in L^{1}_{\mathrm{loc}}(\R^{d}\setminus\{0\})$ and hypotheses $(i)$ and $(ii)$ hold, there results that $\overbar{u}\in \mathcal{C}^{1}((0,+\infty))$, 
\begin{equation}
 \frac{d}{dr}\left(r^{d-1}\frac{d \overbar{u}}{dr}\right)\in L^{1}_{\text{loc}}((0,+\infty)),
\end{equation}
\begin{equation}
\label{eq:elliptic-mean}
-\frac{1}{r^{d-1}}\frac{d}{dr}\left(r^{d-1}\frac{d \overbar{u}}{dr}\right)+\lambda \overbar{u}\geqslant\overline{g}\quad \text{on}\quad (0,+\infty).
\end{equation}
Let now $0<L\leqslant R$, with $R$ as in hypothesis $(iii)$. Multiplying \eqref{eq:elliptic-mean} by $r^{d-1}$ and integrating over the interval $(r,L)$, we get 
\begin{equation}
\label{eq:est-du/dr}
\begin{split}
-r^{d-1}\frac{d\overbar{u}}{dr}(r)&\leqslant\lambda \int_{r}^{L}s^{d-1}\overbar{u}(s)\,ds-\int_{r}^{L}s^{d-1}\overline{g}(s)\,ds-L^{d-1}\frac{d\overbar{u}}{dr}(L)\\
&\leqslant \lambda \int_{r}^{L}s^{d-1}\overbar{u}(s)\,ds + C,
\end{split}
\end{equation}
where the constant $C$ is, from now on, independent of $r\in(0,L)$. Next, let us set
\begin{equation*}
\psi^{\pm}(r):=\int_{r}^{L}s^{d-1}\overbar{u}^{\pm}(s)\,ds=|\partial B_1|\cdot\|u^{\pm}\|_{L^1(r\leqslant|x|\leqslant L)}.
\end{equation*}
Dividing \eqref{eq:est-du/dr} by $r^{d-1}$ and integrating over $(r,L)$ there results that
\begin{equation}
\label{eq:est-u}
\overbar{u}(r)\leqslant \lambda \int_{r}^{L}\frac{\psi^{+}(s)}{s^{d-1}}\,ds-\lambda\int_{r}^{L}\frac{\psi^{-}(s)}{s^{d-1}}\,ds+C\big(\G_{\lambda}(r)+1\big).
\end{equation}
Multiplying \eqref{eq:est-u} by $r^{d-1}$, and observing that the second term in the right hand side of \eqref{eq:est-u} is negative, it follows that
\begin{equation*}
r^{d-1}\overbar{u}(r)\leqslant \lambda\int_{r}^{L}\psi^{+}(s)\,ds+Cr^{d-1}\big(\G_{\lambda}(r)+1\big).
\end{equation*}
Since $\psi^{+}$ is nonincreasing and $r^{d-1}(\G_{\lambda}(r)+1)$ is bounded on $(0,L)$, we get
\begin{equation}
\label{eq:est-rN-1u}
r^{d-1}\overbar{u}(r)\leqslant \lambda L \psi^{+}(r)+C,
\end{equation}
Integrating \eqref{eq:est-rN-1u} over the interval $(r,L)$, and using the fact that $\psi^{+}$ is nonincreasing  there results that 
\begin{equation}\label{eq:jk}
\psi^{+}(r)\leqslant\lambda L^{2}\psi^{+}(r)+\psi^{-}(r)+C.
\end{equation}
Since $u^{-}\in L_{\mathrm{loc}}^1(\R^d)$ by hypothesis (iii), then $\psi^{-}$ stays bounded as $r\to 0$. Hence by choosing $L$ sufficiently small we deduce from \eqref{eq:jk} that also $\psi^{+}$ stays bounded as $r\to 0$, which implies $u\in L^{1}_{\text{loc}}(\R^{d})$. Moreover, 
from the boundedness of $\psi^{\pm}$ for $r\in(0,R)$ and \eqref{eq:est-u} we also deduce that
\begin{equation}\label{medugr}
|\overbar{u}(r)|\lesssim \G_{\lambda}(r)+1\quad \forall\, r\in(0,R).
\end{equation}
\par\smallskip
\emph{Step 2: $\Delta u$ coincides a.e.~with a $L^1$-function.}
We define a measurable function $\varphi$ by the relation $\varphi:=-\Delta u$ a.e.~on  $\R^d$. By hypothesis (i), $\varphi\in L^1_{\mathrm{loc}}(\R^d\setminus\{0\})$. We are going to show that actually it belongs to $L^1(\R^d)$. To this aim, consider for $\eps>0$ functions $\zeta_{\eps}$ satisfying the following properties:
\begin{gather}
\zeta_{\varepsilon} \in \mathcal{C}^{\infty}(B_R), \quad 0 \leqslant \zeta_{\varepsilon} \leqslant 1 \mbox { on } B_R,\quad \zeta_{\varepsilon}(x)=0  \mbox { for }|x|<\varepsilon,\\	
\lim_{\eps\to 0}\zeta_{\varepsilon}(x)=1\quad\forall x\in B_R^*,\\\label{posila}
\Delta \zeta_{\varepsilon} \geqslant 0  \text { on } B_R.
\end{gather}
Observe that
\begin{equation}\label{phizeta}
	\begin{split}
\int_{B_R}\varphi\zeta_{\eps}dx&=-\int_{B_R}u\Delta\zeta_{\eps}\lesssim \int_{B_R}\mathcal{G}_{\lambda}\Delta\zeta_{\eps}dx\\
&=\lambda\int_{B_R}\mathcal{G}_{\lambda}\zeta_{\eps}dx\lesssim \int_{B_R}\mathcal{G}_{\lambda}<+\infty,
\end{split}
\end{equation}
where in the second step we used \eqref{posila} and hypothesis (iii). Moreover, by \emph{Step 1} and hypothesis (ii), we have $\varphi\geqslant -\lambda u+g\in L^1(B_R)$. Thus by \eqref{phizeta}  and Fatou Lemma we obtain $\varphi\in L^1_{\mathrm{loc}}(\R^d)$, as desired.\par\smallskip

\emph{Step 3: the distributional identity \eqref{eq:distr-form-delta} holds.} Let now $T$ be the distribution on $\R^d$ given by
\begin{equation*}
T=-\Delta u-\varphi,
\end{equation*}
whose support is contained in $\{0\}$. In particular, this entails that 
\begin{equation}
\label{eq:T=delta}
T=\sum_{|m|\leqslant k} c_{m}D^{m}\delta_{0},
\end{equation}
for some $k\in\N$, where $m=(m_{1},\dots,m_{d})$ is a multi-index, $|m|:=\sum_{i=1}^{d}m_{i}$, and \begin{equation*}D^{m}=\frac{\partial^{|m|}}{\partial x_{1}^{m_{1}}\dots\partial x_{d}^{m_{d}}}\end{equation*} in the sense of distributions. We claim that $c_{m}=0$ for every $m$ such that $|m|\geqslant 1$. Choose $\eta\in \mathcal{C}^{\infty}_{0}(B_{R})$ satisfying
\begin{equation*}
(-1)^{|m|}(D^{m}\eta)(0)=c_{m}\qquad \forall\quad |m|\leqslant k.
\end{equation*}
 Let $\eta_{\eps}(x):=\eta\left(\frac{x}{\eps}\right)$ for $\eps>0$. By \eqref{eq:T=delta}, and by the definition $T=-\Delta u-\varphi$, it follows that
\begin{equation}
\label{eq:-ulapeta}
\begin{split}
-\int_{B_{R}}u\Delta \eta_{\eps}\,dx-\int_{B_{R}}\varphi\eta_{\eps}\,dx
&=\langle T,\eta_{\eps}\rangle\\
&=\sum_{|m|\leq k}c_m(-1)^{|m|}(D^{m}\eta_{\eps})(0)\\
&=\sum_{|m|\leq k}\frac{c_{m}^{2}}{\eps^{|m|}}.
\end{split}
\end{equation}
Since $\varphi\in L^1(B_R)$, as proved in \emph{Step 2}, by dominated convergence we have 
\begin{equation}\label{ipin}
	\lim_{\eps\to 0}\int_{B_R}\varphi\eta_{\eps}\,dx=0.
\end{equation}
On the other hand, in view of the identity
\begin{equation*}
\int_{B_{R}}u\Delta \eta_{\eps}\,dx=\frac{1}{\eps^{2}}\int_{B_{\eps R}}u(\Delta \eta)\left(\frac{x}{\eps}\right)\,dx,
\end{equation*}
it holds that
\begin{equation}
\label{eq:mod-ulapeta}
\begin{split}
\left|\int_{B_{R}}u\Delta\eta_{\eps}\,dx\right| \leqslant & \,\frac{\|\Delta\eta\|_{L^{\infty}(B_{R})}}{\eps^{2}}\int_{B_{\eps R}}|u|\,dx\sim \frac{1}{\eps^{2}}\int_{0}^{\eps R}r^{d-1}|\overbar{u}(r)|\,dr\\
\lesssim & \frac{1}{\eps^{2}}\int_{0}^{\eps R}r^{d-1}(\G_{\lambda}(r)+1)\,dr\\
\lesssim &
\begin{cases}
1,\quad &d>2,\\
|\ln \eps|+1,\quad &d=2,
\end{cases}
\end{split}
\end{equation}
where we used estimate \eqref{medugr} in the third step. By combining \eqref{eq:-ulapeta}, \eqref{ipin} and \eqref{eq:mod-ulapeta}, it follows that
\begin{equation*}
\left|\sum_{|m|\leqslant k}\frac{c_{m}^{2}}{\eps^{|m|}}\right|\lesssim
\begin{cases}
1\quad &d>2,\\
|\ln\eps|+1,\quad &d=2,
\end{cases}
\end{equation*}
entailing that $c_{m}=0$ if $|m|\geqslant 1$ and concluding the proof.\medskip

\noindent\textbf{Acknowledgments} Filippo Boni and Raffaele Scandone acknowledge partial support from INdAM-GNAMPA. Diego Noja acknowledges the support of the Next Generation EU - Prin 2022 project ``Singular Interactions and Effective Models in Mathematical Physics-2022CHELC7'' and of Gruppo Nazionale di Fisica Matematica (GNFM-INdAM).

\medskip

\noindent\textbf{Data Availability} No datasets were generated or analyzed during the current study.

\section*{Declarations}

\noindent\textbf{Conflict of interest} On behalf of all authors, the corresponding author states that there is no conflict of interest.

\end{document}